\let\footnote=\endnote
\newcommand{\lBrack}{[\![}
\newcommand{\rBrack}{]\!]}
\newcommand\norm[1]{\left\lVert#1\right\rVert}
\newcommand\tp{^\mathsf{T}}
\DeclareMathOperator*{\proj}{proj}
\DeclareMathOperator*{\argmax}{arg\,max}
\pgfplotsset{compat=1.18}
\newcites{appendix}{References}
\newtheorem{theorem}{Theorem}
\newtheorem{lemma}{Lemma}
\newtheorem{proposition}{Proposition}
\theoremstyle{definition}
\newtheorem{assumption}{Assumption}
\title{Alternating Methods for Large-Scale AC Optimal Power Flow with Unit Commitment}
\author{Matthew Brun\thanks{Operations Research Center, Massachusetts Institute of Technology, Cambridge, MA (\href{mailto:brunm@mit.edu}{brunm@mit.edu}).} \and Thomas Lee\thanks{Institute for Data, Systems, \& Society, Massachusetts Institute of Technology, Cambridge, MA (\href{mailto:t\_lee@mit.edu}{t\_lee@mit.edu}).} \and Dirk Lauinger\thanks{Sloan School of Management, Massachusetts Institute of Technology, Cambridge, MA (\href{mailto:lauinger@mit.edu}{lauinger@mit.edu}).} \and Xin Chen\thanks{Department of Electrical \& Computer Engineering,
Texas A\&M University, College Station, TX (\href{mailto:xin\_chen@tamu.edu}{xin\_chen@tamu.edu}).} \and Xu Andy Sun\thanks{Sloan School of Management, Massachusetts Institute of Technology, Cambridge, MA (\href{mailto:sunx@mit.edu}{sunx@mit.edu}).}}
\date{\vspace{-3em}}
\begin{document}

\maketitle

\begin{abstract}
Security-constrained unit commitment with alternating current optimal power flow (SCUC-ACOPF) is a central problem in power grid operations that optimizes commitment and dispatch of generators under a physically accurate power transmission model while encouraging robustness against component failures.  SCUC-ACOPF requires solving large-scale problems that involve multiple time periods and networks with thousands of buses within strict time limits.  In this work, we study a detailed SCUC-ACOPF model with a rich set of features of modern power grids, including price-sensitive load, reserve products, transformer controls, and energy-limited devices. We propose a decomposition scheme and a penalty alternating direction method to find high-quality solutions to this model.  Our methodology leverages spatial and temporal decomposition, separating the problem into a set of mixed-integer linear programs for each bus and a set of continuous nonlinear programs for each time period.  To improve the performance of the algorithm, we introduce a variety of heuristics, including restrictions of temporal linking constraints, a second-order cone relaxation, and a contingency screening algorithm. We quantify the quality of feasible solutions through a dual bound from a convex second-order cone program.  To evaluate our algorithm, we use large-scale test cases from Challenge~3 of the U.S. Department of Energy's Grid Optimization Competition that resemble real power grid data under a variety of operating conditions and decision horizons.  The experiments yield feasible solutions with an average optimality gap of 1.33\%, demonstrating that this approach generates near-optimal solutions within stringent time limits.
\end{abstract}

\textit{Key words: } alternating current optimal power flow, unit commitment, decomposition, penalty alternating direction method

\section{Introduction}

Unit commitment (UC) is a key problem in power systems used to schedule the operations of generators.  This problem seeks minimal cost dispatch solutions that satisfy operational constraints and meet estimated load.  Underlying the problem are a set of physical constraints that dictate how power can be transmitted over the electric grid.  These alternating current optimal power flow (ACOPF) constraints introduce complex nonconvexities and form a mixed-integer nonlinear program (MINLP; \citealt{castillo2016unit}).  Current practice necessitates that commitment solutions be updated as often as every few minutes, requiring that the models be solved within strict time limits  \citep{neiso2023overview}.

The class of ACOPF problems is strongly NP-hard \citep{bienstock2019strong} and is NP-hard even on tree networks \citep{lehmann2015ac}.  A standard approach for addressing this complexity is to use the direct current optimal power flow (DCOPF), which replaces the nonconvex AC power flow constraints with a linearized approximation \citep{stott2009dc}.  However, even under mild assumptions, the solutions for DCOPF are infeasible for the AC problem \citep{baker2021solutions}.  In practice, AC feasibility can be restored from the DC solution via iterative procedures (\citealt{molzahn2019survey} Ch.~6, \citealt{fang2021ac}).  Despite common use, studies have shown significant divergence in dispatch solutions between the true AC model and the approximated DC variant \citep{castillo2016unit}.  Additionally, DC approximations may significantly increase carbon emissions associated with the generation solution \citep{winner2023carbon} and can introduce artificial grid congestion not found in AC-feasible decisions \citep{bichler2023getting}.

Historically, the adoption of mixed-integer linear optimization for UC problems has led to savings for system operators on the order of billions of dollars per year \citep{o2011recent,carlson2012miso}.  Improvements to solution quality from the adoption of AC-based models are estimated to yield annual savings of \$6 -- \$19 billion in the United States \citep{cain2012history}.  However, a methodology for solving industry-scale UC problems with ACOPF constraints (UC-ACOPF) under realistic time limits has not yet been demonstrated.  The value of improved dispatch efficiency and stringent limits on solution time make algorithmic improvements a critical research topic.

To facilitate developments on large-scale UC-ACOPF, the U.S. Department of Energy’s Advanced Research Projects Agency-Energy (ARPA-E) conducted the Grid Optimization Competition.  The competition consisted of three optimization challenges spanning nine years.  Challenge~1 \citep{aravena2023recent} focuses on single-period ACOPF with security constraints, which encourage operational robustness against component failures.  Challenge 2 \citep{elbert2024gochallenge2} increases the realism of the power flow model and introduces single-period commitment decisions for fast-starting units. The competition culminated in Grid Optimization Competition Challenge 3 (GOC3; \citealt{holzer2023grid}), which adopts a multi-period model with full unit commitment decisions, reserve products, and security constraints, yielding a large-scale security-constrained UC-ACOPF model (SCUC-ACOPF). This model considers a rich set of realistic features of modern power grids, including price-sensitive load, various real and reactive reserve products, transformer and transmission line controls, and energy-limited devices (e.g., energy storage and demand response), presenting a level of detail beyond that previously considered in the literature. 

To evaluate competing algorithms while complying with critical infrastructure information regulations, synthetic networks and multi-period datasets were generated to resemble realistic grid data under a variety of operating conditions \citep{birchfield2016grid, li2018load}.  In GOC3, the networks range in size up to 8,000 nodes, and instances of the SCUC-ACOPF models contain millions of binary variables and nonconvex constraints.  Three operational time scales are considered: a day-ahead problem for wholesale markets spanning two days with a one-hour resolution, a real-time problem to account for revised forecasts spanning 8~hours with a 15-minute to one-hour resolution, and a planning process for severe weather events spanning 7~days with a 4-hour resolution \citep{holzer2025go}.  In this work, we detail an iterative decomposition-based algorithm for the GOC3 model and evaluate our algorithm on the numerous datasets from the competition.

\subsection{Literature Review}

An early Benders' decomposition approach for UC-ACOPF was proposed by \cite{fu2005security}, who enforce feasibility for the AC power flow constraints by adding cuts to a master UC problem.  Subsequently, alternating methods have been applied to facilitate decomposition.  \cite{gholami2023admm} apply a bilevel alternating direction method of multipliers (ADMM) to single-period SC-ACOPF, allowing the security constraints to be separated from the ACOPF problem. \cite{zhang2023solving} solve UC-ACOPF on small networks ($\leq\!$ 300 buses) using bilevel ADMM, which facilitates decomposition over the discrete UC and continuous ACOPF portions of the model.  

Other work has applied spatial decompositions to ACOPF.
\cite{tu2020two} separate subnetworks from a master network (e.g., distribution grids from the transmission grid) and solve a master problem via an interior point method which queries subproblems for gradient and Hessian information.  \cite{sun2021two} decompose a network into regions and use bilevel ADMM to identify a feasible solution.  Many other spatial decompositions appear in the literature on distributed computing; for a survey of these techniques, see \cite{molzahn2017survey}.

\cite{parker2024benchmark} propose a benchmark algorithm for the GOC3 model that separates the nonlinear AC power flow constraints from the discrete unit commitment decisions.  The benchmark first solves a copper-plate multi-period unit commitment problem, which does not contain any network power flows.  Then, the commitment decisions from this model are fixed in a set of temporally-decomposed AC power flow problems, which ignore reserve and ramping requirements.  Feasibility for ramping bounds is restored by projecting the power injection solution onto these constraints.
Finally, reserve products are re-dispatched by solving a set of linear programs, with the power injection solution fixed.
\cite{chevalier2024parallelized} introduces an alternate approach for the GOC3 problem that combines gradient-based solvers for unconstrained optimization with problem-specific heuristics, demonstrating competitive performance on some instances.

\subsection{Contributions}
This paper demonstrates that realistic SCUC-ACOPF can be solved to near-optimality at industry scale within stringent time limits.  We introduce an iterative algorithm to identify high-quality feasible solutions to the SCUC-ACOPF model proposed in GOC3 and evaluate its performance on a diverse set of test cases.  Specifically, our contributions are summarized as follows:
\begin{enumerate}
    \item We identify a decomposition scheme that allows for the separation of the problem into spatially-independent mixed-integer linear programs (MILPs) for unit commitment, and temporally-independent continuous nonlinear programs for AC power flow.  We then propose a penalty alternating direction method that utilizes this decomposition;
    \item We prove the convergence of this algorithm to a partial optimal solution, strengthening existing convergence results for penalty alternating direction methods;
    \item We introduce heuristics to tailor the algorithm to the SCUC-ACOPF problem, including:
    \begin{enumerate}
        \item a final solve of AC problems with fixed unit commitment decisions to ensure feasibility,
        \item restrictions on ramping and energy output that allow for temporal decomposition of the final AC solve while maintaining flexibility in power injection decisions,
        \item a second-order cone relaxation of the AC constraints to speed up iterations, and
        \item a screening algorithm to identify high-impact security constraints;
    \end{enumerate}
    \item We construct a convex relaxation that yields a dual bound on the optimal objective value, allowing for characterization of the quality of feasible solutions; and
    \item We evaluate our algorithm on many large-scale test cases with up to 8,000 buses and 48 time periods with time limits as low as 10~minutes. Our algorithm finds high-quality feasible solutions with an average optimality gap of 1.33\% relative to the dual bound.
\end{enumerate}

The remainder of this paper is organized as follows.  Section~\ref{sec:model} introduces a compact representation of the problem formulation.  Section~\ref{sec:reformulate} describes a penalized model that facilitates spatial and temporal decomposition.  Section~\ref{sec:basic} presents a basic penalty alternating direction method from this decomposition and provides convergence results.  Section~\ref{sec:improve} proposes a set of heuristics which tailor this algorithm to the SCUC-ACOPF setting.  Section~\ref{sec:upperbound} defines a dual bound on the optimal objective value.  Section~\ref{sec:results} provides computational results on the GOC3 test cases, demonstrating the performance of our algorithm and the benefits of the heuristics.  Section~\ref{sec:conclusion} concludes the paper.
All proofs are provided in the appendix.

\section{The SCUC-ACOPF Model}
\label{sec:model}
We introduce a compact representation of the SCUC-ACOPF model proposed in \cite{holzer2023grid}.
The model includes device-level unit commitment, net power injection, and AC branch control decisions, subject to bus power balance, zonal reserve constraints, device ramping and reserve limits, and security constraints.

We denote $[\cdot]^+ := \max\{\cdot,0\}$ and $\lBrack \cdot \rBrack := \{1,\dots,\cdot\}$.  The model spans $\mathcal{T} := \lBrack T \rBrack$ time periods, where $t = 0$ represents the initial state of the system.  The length of period $t$ is given by $d_t > 0$.

\subsection{Device Unit Commitment}

Denote by $\mathcal{J}^{\mathrm{pr}}$ (resp.\ $\mathcal{J}^{\mathrm{cs}}$) the set of producing (resp.\ consuming) devices, and dispatchable devices by the union $\mathcal{J}^{\mathrm{sd}} := \mathcal{J}^{\mathrm{pr}} \cup \mathcal{J}^{\mathrm{cs}}$. Producing devices generate power and consuming devices consume power.  Let $u^{\mathrm{on}}_{jt}$, $u^{\mathrm{su}}_{jt}$, and $u^{\mathrm{sd}}_{jt}$ indicate whether device $j$ is on, starting up, or shutting down in period $t$, respectively.  These variables are subject to the following constraints:
\begin{subequations}
    \label{constr:UC}
    \begin{align}
        & u^{\mathrm{on}}_{jt} - u^{\mathrm{on}}_{j,t-1} = u^{\mathrm{su}}_{jt} - u^{\mathrm{sd}}_{jt} \quad & \forall j \in \mathcal{J}^{\mathrm{sd}},\ t \in \mathcal{T}, \label{constr:UCSUSDDef1}\\ 
        & u^{\mathrm{su}}_{jt} + u^{\mathrm{sd}}_{jt} \leq 1 \quad & \forall j \in \mathcal{J}^{\mathrm{sd}},\ t \in \mathcal{T}, \label{constr:UCSUSDDef2}\\
        & \{u^{\mathrm{on}}_{jt},u^{\mathrm{su}}_{jt},u^{\mathrm{sd}}_{jt}\}_{t \in \mathcal{T}} \in \mathcal{X}^{\mathrm{u}}_j \quad & \forall j \in \mathcal{J}^{\mathrm{sd}}, \label{constr:UCDeviceSet}\\
        & (u^{\mathrm{on}}_{jt},\ u^{\mathrm{su}}_{jt},\ u^{\mathrm{sd}}_{jt}) \in \{0,1\}^3 \quad & \forall j \in \mathcal{J}^{\mathrm{sd}},\ t \in \mathcal{T}. \label{constr:UCBinary}
    \end{align}
\end{subequations}
Constraints \eqref{constr:UCSUSDDef1}--\eqref{constr:UCSUSDDef2} connect the device startups and shutdowns to their on status. The initial state $u^{\mathrm{on}}_{j0}$ is provided as data.  Constraint \eqref{constr:UCDeviceSet} enforces device-level commitment logic, including must-run and must-outage, minimum downtime and uptime, and maximum startup requirements.  These constraints are given by the set $\mathcal{X}^{\mathrm{u}}_j$, which is defined by linear constraints and described in detail in the appendix.  

The cost associated with a commitment decision is denoted by $C^{\mathrm{uc}}_j(u)$, which contains fixed shutdown and online costs, as well as downtime-dependent startup costs:
$$C^{\mathrm{uc}}_j(u) := \sum_{t \in \mathcal{T}} \left ( c^{\mathrm{on}}_{jt} u^{\mathrm{on}}_{jt} + c^{\mathrm{su}}_j u^{\mathrm{su}}_{jt} + c^{\mathrm{sd}}_{j} u^{\mathrm{sd}}_{jt} - \sum_{\substack{t' \in \mathcal{T}\\ t' < t}} c^{\mathrm{dd}}_{jtt'} [u^{\mathrm{su}}_{jt} + u^{\mathrm{on}}_{jt'} - 1]^+ \right ) \quad \forall j \in \mathcal{J}^{\mathrm{sd}}.$$
The parameter $c^{\mathrm{on}}_{jt}$ gives fixed costs associated with committed devices.  Parameters $c^{\mathrm{su}}_j \geq 0$ and $c^{\mathrm{sd}}_j \geq 0$ give baseline startup and shutdown costs.  Parameters $c^{\mathrm{dd}}_{jtt'} \geq 0$ give downtime-dependent startup cost adjustments that reduce the cost of startup for devices that were recently on.  Constraints \eqref{constr:UC} and the cost functions $C^{\mathrm{uc}}_j$ are both representable in a mixed-integer linear setting and contain interactions across time periods, but are separable across devices.

\subsection{Device Net Power Injection and Reserves}

Let $p^{\mathrm{tot}}_{jt}$ and $q^{\mathrm{tot}}_{jt}$ be the total real and reactive power injection of device $j$ at time $t$, and let $p^{\mathrm{on}}_{jt}$ give the dispatchable real power.  We use the convention that these variables represent injections for producing devices and withdrawals for consuming devices.   Total power injection and dispatchable power may differ due to injection during device startup or shutdown.
Variables $p^{\mathrm{su}}_{jt}$ (resp.\ $p^{\mathrm{sd}}_{jt}$) give the quantity of injection during startup (resp.\ shutdown). The parameter $p^{\mathrm{supc}}_{jtt'} \geq 0$ gives the quantity of power injection at time $t$ if a startup is scheduled at future time $t'$.  The set $\mathcal{T}^{\mathrm{supc}}_{jt} := \{t' > t\ :\ p^{\mathrm{supc}}_{jtt'} > 0\}$ contains the periods in which a startup will cause power to be produced at period $t$. Parameters $p^{\mathrm{sdpc}}_{jtt'}$ and sets $\mathcal{T}^{\mathrm{sdpc}}_{jt} := \{t' \leq t\ :\ p^{\mathrm{sdpc}}_{jtt'} > 0\}$ are defined similarly for shutdowns.  

Let $\mathcal{R}$ be the set of real power reserve products.  Variable $p^{\mathrm{res}}_{jtr}$ gives the quantity of reserve type $r \in \mathcal{R}$ provided by a device.  Net real power injection and reserves are constrained as follows:
\begin{subequations}
    \label{constr:PIReal}
    \begin{align}
        & p^{\mathrm{tot}}_{jt} = p^{\mathrm{on}}_{jt} + p^{\mathrm{su}}_{jt} + p^{\mathrm{sd}}_{jt} \quad & \forall j \in \mathcal{J}^{\mathrm{sd}},\ t \in \mathcal{T}, \label{constr:PIRealDef}\\
        & p^{\mathrm{su}}_{jt} = \sum_{t' \in \mathcal{T}^{\mathrm{supc}}_{jt}} p^{\mathrm{supc}}_{jtt'} u^{\mathrm{su}}_{jt'} \quad & \forall j \in \mathcal{J}^{\mathrm{sd}},\ t \in \mathcal{T}, \label{constr:PISUDef}\\
        & p^{\mathrm{sd}}_{jt} = \sum_{t' \in \mathcal{T}^{\mathrm{sdpc}}_{jt}} p^{\mathrm{sdpc}}_{jtt'} u^{\mathrm{sd}}_{jt'} \quad & \forall j \in \mathcal{J}^{\mathrm{sd}},\ t \in \mathcal{T}, \label{constr:PISDDef}\\
        & (u^{\mathrm{on}}_{jt},p^{\mathrm{tot}}_{jt},p^{\mathrm{on}}_{jt},\{p^{\mathrm{res}}_{jtr}\}_{r \in \mathcal{R}}) \in \mathcal{X}^{\mathrm{p}}_{jt} \quad & \forall j \in \mathcal{J}^{\mathrm{sd}},\ t \in \mathcal{T}, \label{constr:PIDeviceReserveSet}\\
        & (p^{\mathrm{tot}}_{jt},p^{\mathrm{on}}_{jt},\{p^{\mathrm{res}}_{jtr}\}_{r \in \mathcal{R}}) \geq 0 \quad & \forall j \in \mathcal{J}^{\mathrm{sd}},\ t \in \mathcal{T}. \label{constr:PINonneg}
    \end{align}
\end{subequations}
Constraints \eqref{constr:PIRealDef}--\eqref{constr:PISDDef} define total real power in terms of dispatchable and startup/shutdown power.  Constraint \eqref{constr:PIDeviceReserveSet} enforces device-level constraints that limit real power and reserve provision.  The set $\mathcal{X}^{\mathrm{p}}_{jt}$ bounds power injection and reserve quantities with linear constraints that link the commitment status of the device to its real power and reserve variables; this set is described in the appendix.

Reactive power up and down reserves are given by the variables $q^{\mathrm{res}}_{jt\uparrow}$ and $q^{\mathrm{res}}_{jt\downarrow}$, respectively.  Reactive power injection and reserve products are constrained by the following logic:
\begin{subequations}
    \label{constr:PIReactive}
    \begin{align}
        & q^{\mathrm{tot}}_{jt} \leq q^{\mathrm{max}}_{jt} \left ( u^{\mathrm{on}}_{jt} +\! \sum_{t' \in \mathcal{T}^{\mathrm{supc}}_{jt}} u^{\mathrm{su}}_{jt'} +\! \sum_{t' \in \mathcal{T}^{\mathrm{sdpc}}_{jt}} u^{\mathrm{sd}}_{jt'} \right ) + \beta^{\mathrm{max}}_j p^{\mathrm{tot}}_{jt} - \begin{cases}
            q^{\mathrm{res}}_{jt\uparrow} & \text{if } j \in \mathcal{J}^{\mathrm{pr}}\\
            q^{\mathrm{res}}_{jt\downarrow} & \text{if } j \in \mathcal{J}^{\mathrm{cs}}
        \end{cases}  & \forall j \in \mathcal{J}^{\mathrm{sd}},\, t \in \mathcal{T}, \label{constr:PIReactReserveMax}\\
        & q^{\mathrm{tot}}_{jt} \geq q^{\mathrm{min}}_{jt} \left ( u^{\mathrm{on}}_{jt} +\! \sum_{t' \in \mathcal{T}^{\mathrm{supc}}_{jt}} u^{\mathrm{su}}_{jt'} +\! \sum_{t' \in \mathcal{T}^{\mathrm{sdpc}}_{jt}} u^{\mathrm{sd}}_{jt'} \right ) + \beta^{\mathrm{min}}_j p^{\mathrm{tot}}_{jt} + \begin{cases}
            q^{\mathrm{res}}_{jt\downarrow} & \text{if } j \in \mathcal{J}^{\mathrm{pr}}\\
            q^{\mathrm{res}}_{jt\uparrow} & \text{if } j \in \mathcal{J}^{\mathrm{cs}}
        \end{cases}  & \forall j \in \mathcal{J}^{\mathrm{sd}},\, t \in \mathcal{T}, \label{constr:PIReactReserveMin}\\
        & (q^{\mathrm{res}}_{jt\uparrow},q^{\mathrm{res}}_{jt\downarrow}) \geq 0  & \forall j \in \mathcal{J}^{\mathrm{sd}},\, t \in \mathcal{T}. \label{constr:PIReactNonneg}
    \end{align}
\end{subequations}
Constraints \eqref{constr:PIReactReserveMax}--\eqref{constr:PIReactReserveMin} give bounds on reactive power injection and reserve products, where reserve products count against the appropriate bound.  Parameters $q^{\mathrm{max}}_{jt}$ and $q^{\mathrm{min}}_{jt}$ give the maximum and minimum reactive power bounds.  If a device cannot produce real power (i.e., it is not online or producing startup/shutdown power), it may not provide any reactive power or reserves.  The reactive power bounds may depend linearly on real power output with coefficients $\beta^{\mathrm{min}}_j$ and $\beta^{\mathrm{max}}_j$.

The cost associated with a power injection decision for a producing device is given by the convex piecewise linear function $C^{\mathrm{pow}}_{jt}(p^{\mathrm{tot}}_{jt})$. Similarly, the utility associated with energy consumption by a consuming device is given by the concave piecewise linear function $R^{\mathrm{pow}}_{jt}(p^{\mathrm{tot}}_{jt})$.  These functions only depend on total real power injection for a device and time period.  When maximizing utility $R^{\mathrm{pow}}_{jt}$ and minimizing cost $C^{\mathrm{pow}}_{jt}$, these functions and the constraints \eqref{constr:PIReal}--\eqref{constr:PIReactive} are representable in a linear setting and are separable by device.  However, the constraints \eqref{constr:PIReal}--\eqref{constr:PIReactive} are not separable across time due to interactions with startup and shutdown unit commitment variables across periods.

\subsection{Device Energy Injection Limits}
\label{sec:modelEnergyLim}
Devices may incur a penalty for exceeding maximum or minimum net energy injection limits over some time interval.  A set $W \in \mathcal{W}^{\mathrm{en,min}}_j$ is a set of time periods subject to a minimum energy limit ($W \subseteq \mathcal{T}$), and $e^{\mathrm{min}}_j(W)$ gives the value of the corresponding limit.  Similarly, $\mathcal{W}^{\mathrm{en,max}}_j$ and $e^{\mathrm{max}}_j(W)$ give data for maximum energy limits.  The energy violation cost function is 
$$C^{\mathrm{e}}_j(p) := c^{\mathrm{e}} \left (
\sum_{W \in \mathcal{W}^{\mathrm{en,min}}_j} \left [e^{\mathrm{min}}_j(W) - \sum_{t \in W} d_t p^{\mathrm{tot}}_{jt} \right ]^+ + \sum_{W \in \mathcal{W}^{\mathrm{en,max}}_j} \left [-e^{\mathrm{max}}_j(W) + \sum_{t \in W} d_t p^{\mathrm{tot}}_{jt} \right ]^+ \right ) \quad \forall j \in \mathcal{J}^{\mathrm{sd}},$$
where $c^{\mathrm{e}} \geq 0$ is the energy limit penalty coefficient.  When minimizing cost, $C^{\mathrm{e}}_j$ is linearly representable and separable across devices but couples time periods.

\subsection{Device Real Power Ramping}
Changes in device real power injection are subject to ramp rate bounds:
\begin{subequations}
    \label{constr:Ramp}
    \begin{align}
        & p^{\mathrm{tot}}_{jt} - p^{\mathrm{tot}}_{j,t-1} \geq -d_t \left ( p^{\mathrm{rd}}_{j} u^{\mathrm{on}}_{jt} + p^{\mathrm{rd,sd}}_j (1 - u^{\mathrm{on}}_{jt}) \right ) \quad & \forall j \in \mathcal{J}^{\mathrm{sd}},\ t \in \mathcal{T}, \label{constr:RampDown}\\
        & p^{\mathrm{tot}}_{jt} - p^{\mathrm{tot}}_{j,t-1} \leq d_t \left ( p^{\mathrm{ru}}_j (u^{\mathrm{on}}_{jt} - u^{\mathrm{su}}_{jt}) + p^{\mathrm{ru,su}}_j (u^{\mathrm{su}}_{jt} - u^{\mathrm{on}}_{jt} + 1) \right ) \quad & \forall j \in \mathcal{J}^{\mathrm{sd}},\ t \in \mathcal{T}. \label{constr:RampUp}
    \end{align}
\end{subequations}
The parameters $p^{\mathrm{ru}}_j$ (resp.\ $p^{\mathrm{rd}}_j$) give the maximum rates for increasing (resp.\ decreasing) power injection.  The parameters $p^{\mathrm{ru,su}}_j$ and $p^{\mathrm{rd,sd}}_j$ give the adjusted rates during startup and shutdown.  Initial power injection $p^{\mathrm{tot}}_{j0}$ is provided as data.  These constraints are separable across devices but couple consecutive time periods.

\subsection{Neighborhood Reserve Requirements}
Reserve products are subject to minimum requirements across neighborhoods of devices.  We denote the set of neighborhoods for real power reserves by $\mathcal{N}^{\mathrm{p}}$, where $N \in \mathcal{N}^{\mathrm{p}}$ is a set of devices $j$ in the same neighborhood $(N \subseteq \mathcal{J}^{\mathrm{sd}})$.  We similarly define $\mathcal{N}^{\mathrm{q}}$ as the set of reactive power neighborhoods.  A reserve requirement applies to some subset of reserve products $R \subset \mathcal{R}$.  Violation of the minimum reserve requirement within a neighborhood is subject to a penalty.  The sum of reserve requirement penalties and the cost of providing reserves is given by the functions $C^{\mathrm{res}}_t(p,q)$:
\begin{equation*}
    \begin{aligned}
        C^{\mathrm{res}}_t(p,q) := && d_t \left ( \sum_{\substack{j \in \mathcal{J}^{\mathrm{sd}}\\r \in \mathcal{R}}} {c^{\mathrm{res,p}}_{jtr}} p^{\mathrm{res}}_{jtr} + \sum_{R \subset \mathcal{R}} c^{\mathrm{res}}_R \sum_{N \in \mathcal{N}^{\mathrm{p}}} \left [p^{\mathrm{res,min}}_{t}(p;R,N) - \sum_{r \in R}\sum_{j \in N} p^{\mathrm{res}}_{jtr} \right ]^+ \vphantom{+ \sum_{R \in \{+,-\}} c^{\mathrm{res}}_R \sum_{N \in \mathcal{N}^{\mathrm{q}}} \left [q^{\mathrm{res,min}}(q;R,N,t) - \sum_{j \in N} q^{\mathrm{res}}_{jtr} \right ]^+} \right .\\
        && \left . + \sum_{r \in \{\uparrow,\downarrow\}} \left ( \sum_{j \in \mathcal{J}^{\mathrm{sd}}} c^{\mathrm{res,q}}_{jtr} q^{\mathrm{res}}_{jtr} + c^{\mathrm{res}}_r \sum_{N \in \mathcal{N}^{\mathrm{q}}} \left [q^{\mathrm{res,min}}_{t}(q;r,N) - \sum_{j \in N} q^{\mathrm{res}}_{jtr} \right ]^+ \vphantom{\sum_{R \in \mathcal{R}} c^{\mathrm{res}}_R \sum_{N \in \mathcal{N}^{\mathrm{p}}} \left [p^{\mathrm{res,min}}(p;R,N) - \sum_{r \in R}\sum_{j \in N} p^{\mathrm{res}}_{jtr} \right ]^+} \right ) \vphantom{\sum_{\substack{j \in \mathcal{J}^{\mathrm{sd}}\\r \in \mathcal{R}}}} \right ) & \quad \forall t \in \mathcal{T}.
    \end{aligned}
\end{equation*}
Variable costs for reserve products are given by parameters $c^{\mathrm{res,p}}_{jtr}$ and $c^{\mathrm{res,q}}_{jtr}$.  The functions $p^{\mathrm{res,min}}_t$ and $q^{\mathrm{res,min}}_t$ compute the reserve requirement for the set of products $R$ and neighborhood $N$.  These functions take a constant value, or a proportion of the total ($\sum_{j \in N} p^{\mathrm{tot}}_{jt}$) or maximum ($\max_{j \in N} p^{\mathrm{tot}}_{jt}$) power injection in the neighborhood. Reserve requirement violations for a product set $R$ are penalized at rate $c^{\mathrm{res}}_R$.  When minimizing cost, these functions are linearly representable for the given choices of functions $p^{\mathrm{res,min}}_t$ and $q^{\mathrm{res,min}}_t$, and are separable over time but couple devices.

\subsection{Alternating Current Optimal Power Flow}
We now model the optimal power flow constraints over the transmission network.  For a thorough introduction to ACOPF formulations, see \cite{bienstock2022mathematical}. Let $\mathcal{I}$ be the set of buses and $\mathcal{J}^{\mathrm{br}}$ the set of branches.  These branches are either DC lines ($\mathcal{J}^{\mathrm{dc}}$) or AC branches ($\mathcal{J}^{\mathrm{ac}}$), where transformers are considered AC branches.  Each branch $j$ connects an origin bus $i_j$ to a destination bus $i'_j$.  The network may also include shunt devices ($\mathcal{J}^{\mathrm{sh}}$), with shunt $j$ located at bus $i_j$.  Each dispatchable device $j$ is located at bus $i_j$.

Each bus $i$ has a voltage magnitude control $v_{it}$ and a phase angle control $\theta_{it}$.  Transformer $j$ has a tap ratio control $\tau_{jt}$ and a phase shift control $\phi_{jt}$. For AC lines, these controls are fixed to $1$ and $0$, respectively. Power flows on DC lines are lossless and bounded.  We denote real power flow at the origin and destination nodes of branch $j$ by $p^{\mathrm{fr}}_{jt}$ and $p^{\mathrm{to}}_{jt}$, oriented from the bus into the branch.  Similarly, reactive power flows are denoted by $q^{\mathrm{fr}}_{jt}$ and $q^{\mathrm{to}}_{jt}$.  Shunt device $j$ is controlled by activating some integral number of steps, selected by variable $u^{\mathrm{sh}}_{jt}$.  Real and reactive power over the shunt are given by $p^{\mathrm{sh}}_{jt}$ and $q^{\mathrm{sh}}_{jt}$.  Branch and shunt power flows satisfy the following relations:
\begin{subequations}
    \label{constr:PowerFlow}
    \begin{align}
        & \Delta_{jt} = \theta_{i_j t} - \theta_{i'_j t} - \phi_{jt} \quad & \forall j \in \mathcal{J}^{\mathrm{ac}},\ t \in \mathcal{T}, \label{constr:ACAngleDiff}\\
        & p^{\mathrm{fr}}_{jt} = v_{i_jt} \left ( \frac{ g_{i_jj} v_{i_jt}}{\tau_{jt}^2} + \frac{v_{i'_jt}}{\tau_{jt}} \left ( -g_j \cos (\Delta_{jt}) - b_j \sin(\Delta_{jt}) \right )  \right) \quad & \forall j \in \mathcal{J}^{\mathrm{ac}},\ t \in \mathcal{T}, \label{constr:ACpfr}\\
        & p^{\mathrm{to}}_{jt} = v_{i'_jt} \left ( g_{i'_jj} v_{i'_jt} + \frac{v_{i_jt}}{\tau_{jt}} \left ( -g_j \cos (\Delta_{jt}) + b_j \sin(\Delta_{jt}) \right )  \right) \quad & \forall j \in \mathcal{J}^{\mathrm{ac}},\ t \in \mathcal{T}, \label{constr:ACpto}\\
        & q^{\mathrm{fr}}_{jt} = v_{i_jt} \left ( \frac{ -b_{i_jj} v_{i_jt}}{\tau_{jt}^2} + \frac{v_{i'_jt}}{\tau_{jt}} \left ( b_j \cos (\Delta_{jt}) - g_j \sin(\Delta_{jt}) \right )  \right) \quad & \forall j \in \mathcal{J}^{\mathrm{ac}},\ t \in \mathcal{T},  \label{constr:ACqfr}\\
        & q^{\mathrm{to}}_{jt} = v_{i'_jt} \left ( -b_{i'_jj} v_{i'_jt} + \frac{v_{i_jt}}{\tau_{jt}} \left ( b_j \cos (\Delta_{jt}) + g_j \sin(\Delta_{jt}) \right )  \right) \quad & \forall j \in \mathcal{J}^{\mathrm{ac}},\ t \in \mathcal{T}, \label{constr:ACqto}\\
        & p^{\mathrm{fr}}_{jt} + p^{\mathrm{to}}_{jt} = 0 \quad & \forall j \in \mathcal{J}^{\mathrm{dc}},\ t \in \mathcal{T}, \label{constr:DCBalance}\\
        & p^{\mathrm{sh}}_{jt} = g^{\mathrm{sh}}_j u^{\mathrm{sh}}_{jt} v_{i_jt}^2 \quad & \forall j \in \mathcal{J}^{\mathrm{sh}},\ t \in \mathcal{T}, \label{constr:ShuntReal}\\
        & q^{\mathrm{sh}}_{jt} = -b^{\mathrm{sh}}_j u^{\mathrm{sh}}_{jt} v_{i_jt}^2 \quad & \forall j \in \mathcal{J}^{\mathrm{sh}},\ t \in \mathcal{T}, \label{constr:ShuntReactive}\\
        & v_{it} \in [v^{\mathrm{min}}_i, v^{\mathrm{max}}_i] \quad & \forall i \in \mathcal{I},\ t \in \mathcal{T}, \label{constr:BusSets}\\
        & \phi_{jt} \in [\phi_j^{\mathrm{min}},\phi_j^{\mathrm{max}}],\ \tau_{jt} \in [\tau_j^{\mathrm{min}},\tau_j^{\mathrm{max}}] \quad & \forall j \in \mathcal{J}^{\mathrm{ac}},\ t \in \mathcal{T}, \label{constr:ACSets}\\
        & (p^{\mathrm{fr}}_{jt},p^{\mathrm{to}}_{jt}) \in [-p_j^{\mathrm{max}},p_j^{\mathrm{max}}]^2,\  (q^{\mathrm{fr}}_j,q^{\mathrm{to}}_j) \in [q_j^{\mathrm{fr,min}},q_j^{\mathrm{fr,max}}] \times [q_j^{\mathrm{to,min}},q_j^{\mathrm{to,max}}] \quad & \forall j \in \mathcal{J}^{\mathrm{dc}},\ t \in \mathcal{T}, \label{constr:DCSets}\\
        & u^{\mathrm{sh}}_{jt} \in [u^{\mathrm{sh,min}}_j,u^{\mathrm{sh,max}}_j] \cap \mathbb{Z}_+ \quad & \forall j \in \mathcal{J}^{\mathrm{sh}},\ t \in \mathcal{T}. \label{constr:ShuntSets}
    \end{align}
\end{subequations}
Constraints \eqref{constr:ACAngleDiff}--\eqref{constr:ACqto} define AC branch real and reactive power flows via branch and bus controls.  Parameters $g_j$ and $b_j$ give the series conductance and susceptance of branch~$j$.  Parameter $g_{i j}$ is the sum of $g_j$ and the conductance of the branch shunt at bus $i$, and $b_{ij}$ is the sum of $b_j$, the charging susceptance of the line at bus $i$, and the susceptance of the branch shunt at bus~$i$.  Constraint \eqref{constr:DCBalance} enforces lossless real power balance over DC lines.  Constraints \eqref{constr:ShuntReal}--\eqref{constr:ShuntReactive} define power flows over shunts, where $g^{\mathrm{sh}}_j$ and $b^{\mathrm{sh}}_j$ give the per-step conductance and susceptance of the shunt.  Constraints \eqref{constr:BusSets}--\eqref{constr:ShuntSets} establish appropriate variable domains, where tap ratio bounds $(\tau^{\mathrm{min}}_j,\tau^{\mathrm{max}}_j)$ and voltage bounds $(v^{\mathrm{min}}_{i},v^{\mathrm{max}}_i)$ are positive and shunt step bounds $(u^{\mathrm{sh,min}}_j,u^{\mathrm{sh,max}}_j)$ are integral. 

AC branches are subject to a penalty for violating apparent power limits, given by $C^{\mathrm{ac}}_{jt}(p,q)$:
$$C^{\mathrm{ac}}_{jt}(p,q) := c^{\mathrm{s}} d_t \left [ \max \left \{\norm{(p^{\mathrm{fr}}_{jt},q^{\mathrm{fr}}_{jt})}_2,\ \norm{(p^{\mathrm{to}}_{jt},q^{\mathrm{to}}_{jt})}_2 \right \} - s^{\mathrm{max}}_j \right ]^+ \quad \forall j \in \mathcal{J}^{\mathrm{ac}},\ t \in \mathcal{T}.$$
The parameter $s^{\mathrm{max}}_j$ gives the apparent power limit for branch $j$ and $c^{\mathrm{s}} \geq 0$ is the penalty coefficient for branch limit violations.  The function $C^{\mathrm{ac}}_{jt}$ is convex and separable both by branch and time.  The AC branch power flow definitions \eqref{constr:ACpfr}--\eqref{constr:ACqto} are nonconvex and spatially coupled by bus control decisions.  The shunt power flow constraints \eqref{constr:ShuntReal}--\eqref{constr:ShuntReactive} and \eqref{constr:ShuntSets} are also nonconvex.  Overall, the constraints \eqref{constr:PowerFlow} are spatially coupled but separable over time.

\subsection{Nodal Power Balance}
\label{sec:modelBalance}
The transmission network is subject to real and reactive power balance requirements at each node, which are enforced by penalty functions $C^{\mathrm{bal}}_{it}(p,q)$:
\begin{equation*}
    \begin{aligned}
        && C^{\mathrm{bal}}_{it}(p,q) := d_t \left ( c^{\mathrm{p}}  \left | \sum_{\substack{j \in \mathcal{J}^{\mathrm{pr}}\\i_j = i}} p^{\mathrm{tot}}_{jt} - \sum_{\substack{j \in \mathcal{J}^{\mathrm{cs}}\\i_j = i}} p^{\mathrm{tot}}_{jt} - \sum_{\substack{j \in \mathcal{J}^{\mathrm{sh}}\\i_j = i}} p^{\mathrm{sh}}_{jt} - \sum_{\substack{j \in \mathcal{J}^{\mathrm{br}}\\i_j = i}} p^{\mathrm{fr}}_{jt} - \sum_{\substack{j \in \mathcal{J}^{\mathrm{br}}\\i'_j = i}} p^{\mathrm{to}}_{jt} \right | \right. &\\
        && \left. + c^{\mathrm{q}}  \left | \sum_{\substack{j \in \mathcal{J}^{\mathrm{pr}}\\i_j = i}} q^{\mathrm{tot}}_{jt} - \sum_{\substack{j \in \mathcal{J}^{\mathrm{cs}}\\i_j = i}} q^{\mathrm{tot}}_{jt} - \sum_{\substack{j \in \mathcal{J}^{\mathrm{sh}}\\i_j = i}} q^{\mathrm{sh}}_{jt} - \sum_{\substack{j \in \mathcal{J}^{\mathrm{br}}\\i_j = i}} q^{\mathrm{fr}}_{jt} - \sum_{\substack{j \in \mathcal{J}^{\mathrm{br}}\\i'_j = i}} q^{\mathrm{to}}_{jt} \right | \right ) & \quad \forall i \in \mathcal{I},\ t \in \mathcal{T}.
    \end{aligned}
\end{equation*}
The penalty coefficients $c^{\mathrm{p}}$ and $c^{\mathrm{q}}$ are nonnegative.  When minimizing cost, $C^{\mathrm{bal}}_{it}$ is linearly representable and separable over buses and time periods.

\subsection{Security Constraints}
\label{sec:modelContingency}
The active and reactive power set points are optimized in consideration of apparent power flow overloads that may result from branch outages, i.e., contingencies. Such overloads cost
$$C^{\mathrm{ctg}}_{kt}(p,q) := c^\mathrm{s} d_t \sum_{\substack{j \in \mathcal{J}^{\mathrm{ac}}\\j \neq j_k}} \left[ \norm{(f_{kjt}(p),\max \{ |q^\mathrm{fr}_{jt}|,|q^\mathrm{to}_{jt}| \}) }_2 - s^\mathrm{max,ctg}_j \right]^+ \quad \forall k \in \mathcal{K},\ t \in \mathcal{T},$$
where $\mathcal{K} = \lBrack K \rBrack$ is a set of contingencies, $j_k$ is the branch outaged by contingency $k$, and the post-contingency apparent power limit on branch $j$ is given by $s^{\mathrm{max,ctg}}_j$.  Post-contingency branch flows are computed under a DC power flow model, which reallocates real power, while reactive power flows remain fixed at pre-contingency levels.  
The affine function $f_{kjt}(p) := \sum_{i \in \mathcal{I}} F_{kji} \left ( p^{\mathrm{inj}}_{it}(p) - \frac{p^{\mathrm{sl}}_t(p)}{|\mathcal{I}|}\right )$ computes the new real power flow on branch $j$ after contingency $k$ under the DC model. The function $p^{\mathrm{inj}}_{it}$ computes the net real power injection at bus $i$ from all elements except AC branches,
$$p^{\mathrm{inj}}_{it}(p) := \sum_{\substack{j \in \mathcal{J}^{\mathrm{pr}}\\i_{j} = i}} p^{\mathrm{tot}}_{jt} - \sum_{\substack{j \in \mathcal{J}^{\mathrm{cs}}\\i_{j} = i}} p^{\mathrm{tot}}_{jt} - \sum_{\substack{j \in \mathcal{J}^{\mathrm{sh}}\\i_{j} = i}} p^{\mathrm{sh}}_{jt} - \sum_{\substack{j \in \mathcal{J}^{\mathrm{dc}}\\i_j = i}} p^{\mathrm{fr}}_{jt} - \sum_{\substack{j \in \mathcal{J}^{\mathrm{dc}}\\i'_j = i}} p^{\mathrm{to}}_{jt},$$
and the function $p^{\mathrm{sl}}_t(p) := \sum_{i \in \mathcal{I}} p^{\mathrm{inj}}_{it}(p)$ computes the systemwide imbalance in net injections, which is allocated evenly across buses when computing post-contingency flows. Parameters $F_{kij}$ are generation shift factors (GSFs) from bus $i$ to line $j$ under contingency $k$. These factors are defined explicitly in the appendix; for details, see \citet[][App.~8C]{wood2013power} and \cite{holzer2023fast}.
The total contingency cost is the sum of average and worst-case contingency costs:
$$C^{\mathrm{ctg}}_t(p,q) := \frac{1}{K} \sum_{k \in \mathcal{K}} C^{\mathrm{ctg}}_{kt}(p,q) + \max_{k \in \mathcal{K}}\ C^{\mathrm{ctg}}_{kt}(p,q) \quad \forall t \in \mathcal{T}.$$
The functions $C^{\mathrm{ctg}}_t$ are convex and separable over time periods.

\subsection{The Complete SCUC-ACOPF Model}

We combine the cost and utility terms into grouped objective functions:
\begin{equation*}
    \begin{aligned}
        R^{\mathrm{T}}_t(p,q) & := -C^{\mathrm{res}}_t(p,q) - C^{\mathrm{ctg}}_t(p,q) - \sum_{j \in \mathcal{J}^{\mathrm{ac}}} C^{\mathrm{ac}}_{jt}(p,q) - \sum_{i \in \mathcal{I}} C^{\mathrm{bal}}_{it}(p,q) \quad & \forall t \in \mathcal{T},\\
        R^{\mathrm{J}}_j(p,u) & := - C^{\mathrm{uc}}_j(u) - C^{\mathrm{e}}_j(p) + \begin{cases}
            \sum_{t \in \mathcal{T}} R^{\mathrm{pow}}_{jt}(p) & \text{if } j \in \mathcal{J}^{\mathrm{cs}}\\
            \sum_{t \in \mathcal{T}} -C^{\mathrm{pow}}_{jt}(p) & \text{if } j \in \mathcal{J}^{\mathrm{pr}}
        \end{cases}
        \quad & \forall j \in \mathcal{J}^{\mathrm{sd}}.
    \end{aligned}
\end{equation*}
This partition of the objective functions emphasizes how various terms permit temporal or spatial decomposition.  We now construct the full SCUC-ACOPF model: 
\begin{equation}
    \tag{SCUC-ACOPF}
    \label{SC-ACOPF}
    \begin{aligned}
        Z^*\ :=\ && \max_{\substack{p,q,u,v \\ \Delta,\theta,\tau,\phi}} \quad & \sum_{t \in \mathcal{T}} R^{\mathrm{T}}_t(p,q) + \sum_{j \in \mathcal{J}^{\mathrm{sd}}} R^{\mathrm{J}}_j(p,u)\\
        && \text{s.t.} \quad& \eqref{constr:UC}-\eqref{constr:PowerFlow}.
    \end{aligned}
\end{equation}
The model identifies feasible unit commitments, real and reactive power injections, and transmission network controls that maximize utility from delivered power minus operational costs and penalties.  The problem is a nonconvex MINLP due to unit commitment decisions and AC power flow constraints.  Table~\ref{table:modelBreakdown} categorizes the sources of model complexity, including temporal and spatial coupling, nonlinearity, nonconvexity, and integrality, by constraints and objective terms.

\begin{table}[tp]
    \centering
    \begin{tabular}{c|c|c|c|c|c|c|c|c|c|c|c|c}
        & \eqref{constr:UC} & \eqref{constr:PIReal} -- \eqref{constr:Ramp} & \eqref{constr:PowerFlow} & $C^{\mathrm{uc}}_j$ & $C^{\mathrm{pow}}_{jt}$, $R^{\mathrm{pow}}_{jt}$ & $C^{\mathrm{e}}_j$ & $C^{\mathrm{res}}_t$ & $C^{\mathrm{ac}}_{jt}$ & $C^{\mathrm{bal}}_{it}$ & $C^{\mathrm{ctg}}_{t}$ & $R^{\mathrm{J}}_j$ & $R^{\mathrm{T}}_t$\\
        \hline
         Temporal Coupling & \checkmark & \checkmark & & \checkmark & & \checkmark & & & & & \checkmark & \\
         \hline
         Spatial Coupling & & & \checkmark & & & & \checkmark & & $\thicksim$ & \checkmark & & \checkmark\\
         \hline
         Nonlinearity & & & \checkmark & & & &  & \checkmark & & \checkmark & & \checkmark\\
         \hline
          Nonconvexity & & & \checkmark & & & & & & & & & \\
          \hline
         Integrality & \checkmark & & \checkmark & \checkmark & & & & & & & \checkmark &
    \end{tabular}
    \caption{Sources of complexity by model component. The objective terms $C^{\mathrm{bal}}_{it}$ couple devices at the same bus but do not couple different buses; this limited spatial coupling is indicated by the symbol $\thicksim$.}
    \label{table:modelBreakdown}
\end{table}

\subsection{Assumptions}
From now on, we make the following assumption on the problem data.  The assumption accompanies the nonnegativity and integrality assumptions introduced previously. 
\begin{assumption}
    \label{assump:DataAssumptions}
    The problem data is such that:
    \begin{enumerate}
        \item For all $j \in \mathcal{J}^{\mathrm{sd}}$ and $t \in \mathcal{T}$, it holds that $c^{\mathrm{su}}_j \geq \sum_{\substack{t' \in \mathcal{T}\\t' < t}} c^{\mathrm{dd}}_{jtt'}$;
        \item For all $u$ satisfying \eqref{constr:UC},
        \begin{equation}
        \label{eq:SUSDRelaxation}
            u^{\mathrm{on}}_{jt} + \sum_{t' \in \mathcal{T}^{\mathrm{supc}}_{jt}} u^{\mathrm{su}}_{jt'} \leq 1, ~~
            u^{\mathrm{on}}_{jt} + \sum_{t' \in \mathcal{T}^{\mathrm{sdpc}}_{jt}} u^{\mathrm{sd}}_{jt'} \leq 1
            \quad
            \forall j \in \mathcal{J}^{\mathrm{sd}},\ t \in \mathcal{T};
        \end{equation}
        \item The sets $\mathcal{X}^{\mathrm{p}}_{jt}$ are closed, convex, bounded, and polyhedral for all $j \in \mathcal{J}^{\mathrm{sd}}$ and $t \in \mathcal{T}$; and
        \item \eqref{SC-ACOPF} is feasible with optimal objective value $Z^* \geq 0$.
    \end{enumerate}
\end{assumption}
First, the startup cost adjustments cannot exceed the total startup cost; second, devices cannot simultaneously be committed and produce power related to a future startup or past shutdown;
third, the feasible region for real power injection is bounded and defined by linear constraints; and fourth, the model is feasible with a nonnegative optimal objective value.

\section{Spatial-Temporal Reformulation}
\label{sec:reformulate}
Problem~\eqref{SC-ACOPF} scales with the number of planning periods and the size of the network. Time periods are linked by unit commitment, ramping, and energy limits, and buses are linked by power flows and neighborhood reserves. In this section, we reformulate the model to facilitate a decomposition into continuous nonconvex temporally separate subproblems and mixed-integer linear spatially separate subproblems.  We use the notation $u_j$ to denote the subset of variables $u$ that contain index $j$; for instance, $u_j = \{(u^{\mathrm{on}}_{jt},u^{\mathrm{su}}_{jt},u^{\mathrm{sd}}_{jt})\}_{t \in \mathcal{T}}$ if $j \in \mathcal{J}^{\mathrm{sd}}$.  Subscripts $t$ are used similarly.  We construct the following sets:
\begin{itemize}
    \item $\mathcal{X}_j^{\mathrm{uc}} := \left \{ (p_j,q_j,u_j) \,:\, \eqref{constr:UC} -\eqref{constr:Ramp} \right \} \quad \forall j \in \mathcal{J}^{\mathrm{sd}}$;
    \item $\mathcal{X}_t^{\mathrm{ac}} := \{(p_t,q_t,u^{\text{sh}}_t)\,:\,\exists (v_t,\Delta_t,\theta_t,\tau_t,\phi_t) \text{ s.t. } \eqref{constr:ACAngleDiff}-\eqref{constr:DCSets}\} \quad \forall t \in \mathcal{T}$;
    \item $\mathcal{X}_{jt}^{\mathrm{sh}} := \{u^{\mathrm{sh}}_{jt} \,:\, \eqref{constr:ShuntSets}\} \quad \forall j \in \mathcal{J}^{\mathrm{sh}},\ t \in \mathcal{T}$.
\end{itemize}
The sets $\mathcal{X}^{\mathrm{uc}}_j$ impose temporal coupling on the decision variables, the sets $\mathcal{X}^{\mathrm{ac}}_{t}$ impose spatial coupling, and the sets $\mathcal{X}^{\mathrm{sh}}_{jt}$ impose neither type of coupling.  

We also construct relaxed versions of the sets that are compatible with the desired decompositions.  Specifically, these relaxations of $\mathcal{X}^{\mathrm{uc}}_j$ and $\mathcal{X}^{\mathrm{sh}}_{jt}$ are continuous and separable over time.  To facilitate this construction, we introduce the following constraints which relax \eqref{constr:PISUDef}--\eqref{constr:PISDDef}:
\begin{subequations}
    \label{constr:RelaxPI}
    \begin{align}
        & 0 \leq p^{\mathrm{su}}_{jt} \leq \sum_{t' \in \mathcal{T}^{\mathrm{supc}}_{jt}} p^{\mathrm{supc}}_{jtt'} u^{\mathrm{su}}_{jt'} \quad & \forall j \in \mathcal{J}^{\mathrm{sd}},\ t \in \mathcal{T}, \label{constr:RelaxPISUDef}\\
        & 0 \leq p^{\mathrm{sd}}_{jt} \leq \sum_{t' \in \mathcal{T}^{\mathrm{sdpc}}_{jt}} p^{\mathrm{sdpc}}_{jtt'} u^{\mathrm{sd}}_{jt'} \quad & \forall j \in \mathcal{J}^{\mathrm{sd}},\ t \in \mathcal{T}. \label{constr:RelaxPISDDef}
    \end{align}
\end{subequations}
This relaxation will be used to aggregate the variables $u^{\mathrm{su}}_{jt}$ and $u^{\mathrm{sd}}_{jt}$ in Section~\ref{sec:improve_objective}. The relaxed sets are defined for all $t \in \mathcal{T}$ as follows:
\begin{itemize}
    \item $\mathcal{Y}^{\mathrm{uc}}_{jt} := \{(p_{jt},q_{jt},u^{\mathrm{on}}_{jt})\,:\, \exists (u^{\mathrm{su}}_j,u^{\mathrm{sd}}_j) \in [0,1]^{2T} \text{ s.t. } \eqref{constr:PIRealDef},\,\eqref{constr:PIDeviceReserveSet},\,\eqref{constr:PINonneg},\,\eqref{constr:PIReactive},\,\eqref{eq:SUSDRelaxation},\,\eqref{constr:RelaxPI},\,u^{\mathrm{on}}_{jt} \geq 0\}\  \forall j \in \mathcal{J}^{\mathrm{sd}}$;
    \item $\mathcal{Y}^{\mathrm{sh}}_{jt} := \{u^{\mathrm{sh}}_{jt} \in [u^{\mathrm{sh,min}}_j,u^{\mathrm{sh,max}}_j]\} \quad \forall j \in \mathcal{J}^{\mathrm{sh}}$.
\end{itemize}
Sets $\mathcal{Y}^{\mathrm{uc}}_{jt}$ relax sets $\mathcal{X}^{\mathrm{uc}}_j$ by replacing constraints~\eqref{constr:UC} with~\eqref{eq:SUSDRelaxation} and relaxing the binary constraints~\eqref{constr:UCBinary} to $(u^{\mathrm{on}}_{jt},u^{\mathrm{sd}}_{jt},u^{\mathrm{su}}_{jt}) \in [0,1]^3$.  Additionally, constraints \eqref{constr:PISUDef}--\eqref{constr:PISDDef} are relaxed to \eqref{constr:RelaxPI} and constraints \eqref{constr:Ramp} are removed.  Sets $\mathcal{Y}^{\mathrm{sh}}_{jt}$ relax the integrality restrictions of $\mathcal{X}^{\mathrm{sh}}_{jt}$.  Lemma~\ref{lemma:setRelaxations} shows that the sets $\mathcal{Y}$ are valid relaxations of their counterparts $\mathcal{X}$.

\begin{lemma}
\label{lemma:setRelaxations}
It holds that
\begin{enumerate}
    \item $\mathcal{X}^{\mathrm{uc}}_j \subseteq \{(p_j,q_j,u_j) \,:\, (p_{jt},q_{jt},u^{\mathrm{on}}_{jt}) \in \mathcal{Y}^{\mathrm{uc}}_{jt} \ \forall t \in \mathcal{T}\} \quad \forall j \in \mathcal{J}^{\mathrm{sd}}$; and \label{lemprop:UCRelaxInclusion}
    \item $\mathcal{X}^{\mathrm{sh}}_{jt} \subseteq \mathcal{Y}^{\mathrm{sh}}_{jt} \quad \forall j \in \mathcal{J}^{\mathrm{sh}}$,\ $t \in \mathcal{T}$. \label{lemprop:ShuntRelaxInclusion}
\end{enumerate}
\end{lemma}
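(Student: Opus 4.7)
The plan is to prove both inclusions by direct verification, since each $\mathcal{Y}$-set was deliberately constructed as a relaxation of the corresponding $\mathcal{X}$-set. In each case, given a point in the smaller set, I will exhibit witnesses (where needed) and check each defining condition of the larger set.

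For Part~\ref{lemprop:ShuntRelaxInclusion}, the argument is immediate: by definition $\mathcal{X}^{\mathrm{sh}}_{jt}$ imposes $u^{\mathrm{sh}}_{jt} \in [u^{\mathrm{sh,min}}_j,u^{\mathrm{sh,max}}_j] \cap \mathbb{Z}_+$, whereas $\mathcal{Y}^{\mathrm{sh}}_{jt}$ only requires $u^{\mathrm{sh}}_{jt} \in [u^{\mathrm{sh,min}}_j,u^{\mathrm{sh,max}}_j]$, so dropping the integrality restriction yields the inclusion.

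For Part~\ref{lemprop:UCRelaxInclusion}, I would fix $j \in \mathcal{J}^{\mathrm{sd}}$, take an arbitrary $(p_j,q_j,u_j) \in \mathcal{X}^{\mathrm{uc}}_j$, and propose the very same $(u^{\mathrm{su}}_j,u^{\mathrm{sd}}_j)$ present in $u_j$ as the existential witness required by the definition of $\mathcal{Y}^{\mathrm{uc}}_{jt}$. Since $(p_j,q_j,u_j)$ satisfies \eqref{constr:UC}, the binary restriction \eqref{constr:UCBinary} forces $(u^{\mathrm{su}}_j,u^{\mathrm{sd}}_j) \in \{0,1\}^{2T} \subseteq [0,1]^{2T}$, and likewise $u^{\mathrm{on}}_{jt} \in \{0,1\}$, giving $u^{\mathrm{on}}_{jt} \geq 0$. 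Constraints \eqref{constr:PIRealDef}, \eqref{constr:PIDeviceReserveSet}, \eqref{constr:PINonneg}, and \eqref{constr:PIReactive} are shared verbatim between $\mathcal{X}^{\mathrm{uc}}_j$ (through \eqref{constr:PIReal}--\eqref{constr:PIReactive}) and $\mathcal{Y}^{\mathrm{uc}}_{jt}$, so they carry over directly for each $t \in \mathcal{T}$. For the relaxed startup/shutdown linkages \eqref{constr:RelaxPI}, the upper bounds follow as equalities from \eqref{constr:PISUDef}--\eqref{constr:PISDDef}, while the nonnegativity of $p^{\mathrm{su}}_{jt}$ and $p^{\mathrm{sd}}_{jt}$ follows from the same identities together with the nonnegativity of $p^{\mathrm{supc}}_{jtt'}$, $p^{\mathrm{sdpc}}_{jtt'}$ and of the binary variables $u^{\mathrm{su}}_{jt'}$, $u^{\mathrm{sd}}_{jt'}$. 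Finally, condition \eqref{eq:SUSDRelaxation} is delivered by Assumption~\ref{assump:DataAssumptions}, part~2, which asserts exactly these inequalities for any $u$ that satisfies \eqref{constr:UC}.

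There is no real obstacle here; the lemma is a bookkeeping statement that certifies the relaxations introduced before the lemma are indeed relaxations. The only subtle point worth highlighting explicitly in the write-up is that the existential quantifier in the definition of $\mathcal{Y}^{\mathrm{uc}}_{jt}$ is witnessed by the global startup and shutdown schedule $(u^{\mathrm{su}}_j,u^{\mathrm{sd}}_j)$ inherited from the $\mathcal{X}^{\mathrm{uc}}_j$-point, and that invoking Assumption~\ref{assump:DataAssumptions}(2) is what allows the previously implicit coupling \eqref{eq:SUSDRelaxation} to be stated as a standalone constraint of the relaxed set.
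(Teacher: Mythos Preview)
Your proposal is correct and follows essentially the same approach as the paper: both arguments use the inherited $(u^{\mathrm{su}}_j,u^{\mathrm{sd}}_j)$ as the existential witness for $\mathcal{Y}^{\mathrm{uc}}_{jt}$, invoke Assumption~\ref{assump:DataAssumptions}(2) for \eqref{eq:SUSDRelaxation}, derive nonnegativity of $p^{\mathrm{su}}_{jt},p^{\mathrm{sd}}_{jt}$ from the sign of $p^{\mathrm{supc}}_{jtt'},p^{\mathrm{sdpc}}_{jtt'}$ and the binary variables, and treat Part~\ref{lemprop:ShuntRelaxInclusion} by dropping integrality.
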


From these set representations of the constraints, we construct a copy-constrained model:
\begin{subequations}
    \makeatletter
    \def\@currentlabel{COPY}
    \makeatother
    \label{EQ}
    \renewcommand{\theequation}{COPY.\alph{equation}}
    \begin{align}
    \max_{\substack{p,q,u\\\overline{p},\overline{q},\overline{u}}} \quad & \sum_{t \in \mathcal{T}} R^{\mathrm{T}}_t(p_t,q_t) + \sum_{j \in \mathcal{J}^{\mathrm{sd}}} R^{\mathrm{J}}_j(\overline{p}_j,\overline{u}_j)\\
    \text{s.t.} \quad & (p_t,q_t,u^{\mathrm{sh}}_t) \in \mathcal{X}^{\mathrm{ac}}_t \quad & \forall t \in \mathcal{T}, & \hspace{10em} \label{EQFirstSet}\\
    & (p_{jt},q_{jt},u^{\mathrm{on}}_{jt}) \in \mathcal{Y}^{\mathrm{uc}}_{jt} \quad & \forall j \in \mathcal{J}^{\mathrm{sd}},\ t \in \mathcal{T},\\
    & u^{\mathrm{sh}}_{jt} \in \mathcal{Y}^{\mathrm{sh}}_{jt} \quad & \forall j \in \mathcal{J}^{\mathrm{sh}},\ t \in \mathcal{T},\\
    & (\overline{p}_j, \overline{q}_j,\overline{u}_j) \in \mathcal{X}^{\mathrm{uc}}_j \quad & \forall j \in \mathcal{J}^{\mathrm{sd}},\\
    & \overline{u}^{\mathrm{sh}}_{jt} \in \mathcal{X}^{\mathrm{sh}}_{jt} \quad & \forall j \in \mathcal{J}^{\mathrm{sh}},\ t \in \mathcal{T}, \label{EQLastSet}\\
    & \rlap{$p^{\mathrm{tot}} = \overline{p}^{\mathrm{tot}},\ p^{\mathrm{su}} = \overline{p}^{\mathrm{su}},\ p^{\mathrm{sd}} = \overline{p}^{\mathrm{sd}},\ q^{\mathrm{tot}} = \overline{q}^{\mathrm{tot}},\ q^{\mathrm{res}} = \overline{q}^{\mathrm{res}},\ u^{\mathrm{on}} = \overline{u}^{\mathrm{on}},\ u^{\mathrm{sh}} = \overline{u}^{\mathrm{sh}}.$}
    \end{align}
\end{subequations}
This model duplicates the variables into blocks $(p,q,u)$ and $(\overline{p},\overline{q},\overline{u})$.  The variables $(p,q,u)$ are subject to the spatially linked power flow constraints $\mathcal{X}^{\mathrm{ac}}_t$, while the variables $(\overline{p},\overline{q},\overline{u})$ are subject to the temporally linked unit commitment, power injection, and ramping constraints $\mathcal{X}^{\mathrm{uc}}_j$ and shunt step constraints $\mathcal{X}^{\mathrm{sh}}_{jt}$.  To tighten the formulation in an eventual decomposition, variables $(p,q,u)$ are also constrained by sets $\mathcal{Y}$, which impose a relaxed form of the constraints assigned to $(\overline{p},\overline{q},\overline{u})$ such that the temporal separability remains valid.  Using the decomposable structure of the objective, the functions $R^{\mathrm{T}}_t$ and $R^{\mathrm{J}}_j$ take only the local variables $(p_t,q_t)$ and $(\overline{p}_j,\overline{u}_j)$ as input.  The final set of constraints are copy constraints, applied to a set of variables chosen so that the model \eqref{EQ} is equivalent to \eqref{SC-ACOPF}.  This equivalence is given in Proposition~\ref{prop:EQequivalence}.

\begin{proposition}
    \label{prop:EQequivalence}
    The models \eqref{SC-ACOPF} and \eqref{EQ} are equivalent; that is, given a feasible solution for one model, there exists a corresponding feasible solution for the other model that has the same objective value and the same values for the copied variables.
\end{proposition}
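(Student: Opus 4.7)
The plan is to prove the equivalence by constructing feasibility-preserving and objective-preserving maps in both directions. Starting from a feasible $(p, q, u, v, \Delta, \theta, \tau, \phi)$ for \eqref{SC-ACOPF}, the forward direction is a direct lift: I set $(\overline{p}, \overline{q}, \overline{u}) := (p, q, u)$ on the copied coordinates. The copy constraints hold by construction, the blocks $(\overline{p}_j, \overline{q}_j, \overline{u}_j) \in \mathcal{X}^{\mathrm{uc}}_j$ and $\overline{u}^{\mathrm{sh}}_{jt} \in \mathcal{X}^{\mathrm{sh}}_{jt}$ restate \eqref{constr:UC}--\eqref{constr:Ramp} and \eqref{constr:ShuntSets} for the original solution, the block $(p_t, q_t, u^{\mathrm{sh}}_t) \in \mathcal{X}^{\mathrm{ac}}_t$ is certified by the original auxiliaries $(v_t, \Delta_t, \theta_t, \tau_t, \phi_t)$, and the relaxed memberships $(p_{jt}, q_{jt}, u^{\mathrm{on}}_{jt}) \in \mathcal{Y}^{\mathrm{uc}}_{jt}$ and $u^{\mathrm{sh}}_{jt} \in \mathcal{Y}^{\mathrm{sh}}_{jt}$ follow from Lemma~\ref{lemma:setRelaxations}.

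For the reverse direction, given a feasible $(p, q, u, \overline{p}, \overline{q}, \overline{u})$ for \eqref{EQ}, I would build an \eqref{SC-ACOPF} solution by taking $(p, q, u^{\mathrm{on}}, u^{\mathrm{sh}})$ from the spatial side, defining $u^{\mathrm{su}} := \overline{u}^{\mathrm{su}}$ and $u^{\mathrm{sd}} := \overline{u}^{\mathrm{sd}}$ from the temporal side (these are binary because they live inside $\mathcal{X}^{\mathrm{uc}}_j$), and adopting the auxiliary ACOPF variables that certify $(p_t, q_t, u^{\mathrm{sh}}_t) \in \mathcal{X}^{\mathrm{ac}}_t$. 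The power flow constraints \eqref{constr:PowerFlow} then hold directly. For each device-level constraint in $\mathcal{X}^{\mathrm{uc}}_j$, the copies $p^{\mathrm{tot}} = \overline{p}^{\mathrm{tot}}$, $p^{\mathrm{su}} = \overline{p}^{\mathrm{su}}$, $p^{\mathrm{sd}} = \overline{p}^{\mathrm{sd}}$, $q^{\mathrm{tot}} = \overline{q}^{\mathrm{tot}}$, $q^{\mathrm{res}} = \overline{q}^{\mathrm{res}}$, $u^{\mathrm{on}} = \overline{u}^{\mathrm{on}}$ together with the selections of $u^{\mathrm{su}}, u^{\mathrm{sd}}$ transfer the constraint verbatim from the $\overline{\cdot}$ variables onto the merged solution; shunt integrality in \eqref{constr:ShuntSets} transfers identically through $u^{\mathrm{sh}} = \overline{u}^{\mathrm{sh}}$.

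The main obstacle I anticipate is the bookkeeping for the variables that appear on both sides without being explicitly equated, namely $p^{\mathrm{on}}$ and $p^{\mathrm{res}}$. For $p^{\mathrm{on}}$, the identity \eqref{constr:PIRealDef} sits in both $\mathcal{Y}^{\mathrm{uc}}_{jt}$ and $\mathcal{X}^{\mathrm{uc}}_j$, so combined with the three copies on $p^{\mathrm{tot}}, p^{\mathrm{su}}, p^{\mathrm{sd}}$ it automatically forces $p^{\mathrm{on}} = \overline{p}^{\mathrm{on}}$ and no choice is needed. For $p^{\mathrm{res}}$, keeping the spatial-side values works because they satisfy $\mathcal{X}^{\mathrm{p}}_{jt}$ through \eqref{constr:PIDeviceReserveSet} inside $\mathcal{Y}^{\mathrm{uc}}_{jt}$ and are precisely the values that enter $R^{\mathrm{T}}_t$ in \eqref{EQ}. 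Objective equality then follows term-by-term: $R^{\mathrm{T}}_t$ depends only on the spatial-side $(p_t, q_t)$, and $R^{\mathrm{J}}_j$ depends on $(\overline{p}_j, \overline{u}_j)$, which by the copy constraints together with $u^{\mathrm{su}} = \overline{u}^{\mathrm{su}}$ and $u^{\mathrm{sd}} = \overline{u}^{\mathrm{sd}}$ coincides with $R^{\mathrm{J}}_j$ evaluated on the reconstructed \eqref{SC-ACOPF} solution, so the two objective values agree.
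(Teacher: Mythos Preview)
Your proposal is correct and follows essentially the same approach as the paper's own proof: the forward direction is the identical lift $(\overline{p},\overline{q},\overline{u}) := (p,q,u)$ with Lemma~\ref{lemma:setRelaxations} supplying the relaxed memberships, and the reverse direction builds the \eqref{SC-ACOPF} candidate by pairing the spatial-side $(p,q)$ with $\overline{u}$ (your merged $u$ coincides with $\overline{u}$ via the copy constraints) and then verifying \eqref{constr:PIReal}--\eqref{constr:Ramp} using membership in $\mathcal{Y}^{\mathrm{uc}}_{jt}$ for the constraints that depend on $u$ only through $u^{\mathrm{on}}$. Your explicit bookkeeping for $p^{\mathrm{on}}$ and $p^{\mathrm{res}}$ is a bit more detailed than the paper's presentation, but the logical skeleton is the same.
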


As a final step towards decomposition, we penalize the copy constraints in the objective with penalty parameter $\rho \geq 0$ and degree $\ell \in \{1,2\}$.  We define the penalty function
$$\Gamma_{\ell} (p,q,u,\overline{p},\overline{q},\overline{u}) := \norm{(p^{\mathrm{tot}},p^{\mathrm{su}},p^{\mathrm{sd}},q^{\mathrm{tot}},q^{\mathrm{res}},u^{\mathrm{on}},u^{\mathrm{sh}}) - (\overline{p}^{\mathrm{tot}},\overline{p}^{\mathrm{su}},\overline{p}^{\mathrm{sd}},\overline{q}^{\mathrm{tot}},\overline{q}^{\mathrm{res}},\overline{u}^{\mathrm{on}},\overline{u}^{\mathrm{sh}})}_\ell^\ell$$
and introduce the penalized model:
\begin{equation}
    \tag{PEN}
    \label{PEN}
    \begin{aligned}
        \max_{\substack{p,q,u\\\overline{p},\overline{q},\overline{u}}} \quad & \sum_{t \in \mathcal{T}} R^{\mathrm{T}}_t(p_t,q_t) + \sum_{j \in \mathcal{J}^{\mathrm{sd}}} R^{\mathrm{J}}_j(\overline{p}_j,\overline{u}_j) - \rho \Gamma_{\ell}(p,q,u,\overline{p},\overline{q},\overline{u})\\
        \text{s.t.} \quad & \eqref{EQFirstSet} - \eqref{EQLastSet}.
    \end{aligned}
\end{equation}

\section{Basic Penalty Alternating Direction Method and Decomposition}
\label{sec:basic}

Penalizing violations of relaxed constraints in the objective is leveraged in popular algorithms, including penalty methods and augmented Lagrangian methods \citep[][Ch.~17]{nocedal1999numerical}. To allow for decomposition, we apply an alternating direction method to the penalized formulation \eqref{PEN}.  This method alternates between fixing the the second block of variables, i.e., $(\overline{p},\overline{q},\overline{u})$, and optimizing over the first block, i.e., $(p,q,u)$, and fixing the first block of variables and optimizing over the second block. This style of algorithm, applied to a problem with constraints penalized in the objective, is referred to as a penalty alternating direction method \citep{geissler2017penalty} and has been used to solve  MINLPs for optimal gas transport \citep{geissler2015solving}. These methods have structural similarities to the feasibility pump, a heuristic for identifying feasible solutions to problems with integrality constraints \citep{fischetti2005feasibility, bonami2009feasibility}.  In this section, we propose a basic version of the algorithm and describe its convergence.

As the first block of variables are not coupled across time periods, we define a set of temporally-decomposed block subproblems.   For each $t \in \mathcal{T}$, the block subproblem is given by 
\begin{equation*}
    \label{B2}
	\begin{aligned}
    Z^{\mathrm{T}}_t(\overline{p}_t,\overline{q}_t,\overline{u}_t;\rho) \ :=\  \max_{p_t,q_t,u_t} \quad & R^{\mathrm{T}}_t(p_t,q_t) - \rho \Gamma_2(p_t,q_t,u_t,\overline{p}_t,\overline{q}_t,\overline{u}_t)\\
    \text{s.t.} \quad & \begin{aligned}[t]
    & (p_t,q_t,u^{\mathrm{sh}}_t) \in \mathcal{X}^{\mathrm{ac}}_t,\\
    & (p_{jt},q_{jt},u^{\mathrm{on}}_{jt}) \in \mathcal{Y}^{\mathrm{uc}}_{jt} \quad & \forall j \in \mathcal{J}^{\mathrm{sd}},\\
    & u^{\mathrm{sh}}_{jt} \in \mathcal{Y}^{\mathrm{sh}}_{jt} \quad & \forall j \in \mathcal{J}^{\mathrm{sh}}.
    \end{aligned}
    \end{aligned}
\end{equation*}
These models select degree $\ell = 2$ so that auxiliary variables are not needed to model the $L_1$ norm.  As the constraints and function $R^{\mathrm{T}}_t$ contain nonlinearities, this choice does not increase the problem complexity.  The temporally-decomposed block subproblems are continuous nonconvex programs, which can be solved to stationarity by interior point methods \citep{wachter2005line}.

Next, we define the block subproblems for the second block of variables, which are coupled temporally but not spatially.  This block also does not contain interactions between the shunt step variables $\overline{u}^{\mathrm{sh}}_{jt}$ and other decisions, so the variables $\overline{u}^{\mathrm{sh}}_{jt}$ can be separated into their own blocks.  For this reason, we revise the copy penalty function to exclude these variables:
$$\Gamma'_{\ell} (p,q,u,\overline{p},\overline{q},\overline{u}) := \norm{(p^{\mathrm{tot}},p^{\mathrm{su}},p^{\mathrm{sd}},q^{\mathrm{tot}},q^{\mathrm{res}},u^{\mathrm{on}}) - (\overline{p}^{\mathrm{tot}},\overline{p}^{\mathrm{su}},\overline{p}^{\mathrm{sd}},\overline{q}^{\mathrm{tot}},\overline{q}^{\mathrm{res}},\overline{u}^{\mathrm{on}})}_\ell^\ell.$$
Then, the spatially-decomposed device-level block subproblems are given, for each $j \in \mathcal{J}^{\mathrm{sd}}$, by
\begin{equation*}
    \label{B1}
	\begin{aligned}
    Z^{\mathrm{J}}_j(p_j,q_j,u_j;\rho)\ :=\  \max_{\overline{p}_j,\overline{q}_j,\overline{u}_j} \quad &  R^{\mathrm{J}}_j(\overline{p}_j,\overline{u}_j) - \rho \Gamma'_1(p_j,q_j,u_j,\overline{p}_j,\overline{q}_j,\overline{u}_j)\\
    \text{s.t.} \quad & (\overline{p}_j, \overline{q}_j,\overline{u}_j) \in \mathcal{X}^{\mathrm{uc}}_j.
    \end{aligned}
\end{equation*}
These problems select degree $\ell = 1$ so that each problem can be represented as a mixed-integer linear program.  The shunt variables $\overline{u}^{\mathrm{sh}}_{jt}$ are only subject to interval and integrality constraints and thus can be optimized in closed form via rounding:
\begin{equation*}
    Z^{\mathrm{SH}}_{jt}(u^{\mathrm{sh}}_{jt})\ :=\  \left \lfloor \frac{1}{2} + \proj_{\left [u^{\mathrm{sh,min}}_j,u^{\mathrm{sh,max}}_j \right ]} (u^{\mathrm{sh}}_{jt}) \right \rfloor \ \in \ \argmax_{\overline{u}^{\mathrm{sh}}_{jt} \in \mathcal{X}^{\mathrm{sh}}_{jt}} \quad -\rho \norm{u^{\mathrm{sh}}_{jt} - \overline{u}^{\mathrm{sh}}_{jt}}_1.
\end{equation*}

We now define the basic penalty alternating direction method (pADM), given as Algorithm~\ref{alg:basicAPM}.  The method takes as input a set of penalty coefficients $\{\rho_\tau\}_{\tau = 1}^{\overline{\tau}}$ and outputs a feasible solution to the device-level subproblems $Z^{\mathrm{J}}_j$ in $\overline{\tau}$ iterations.  A feasible solution to the device-level subproblems corresponds to a feasible solution to \eqref{EQ}; this property is formalized in Proposition~\ref{prop:DLFeasible}.  Note that, in the first iteration, the subproblems $Z^{\mathrm{T}}_t$ are solved with $\rho = 0$, eliminating the penalty term.

\begin{algorithm}[t]
    \caption{Basic Penalty Alternating Direction Method (pADM)}\label{alg:basicAPM}
    \SetKwInOut{Input}{Input}
    \Input{Penalty schedule $\{\rho_\tau\}_{\tau = 1}^{\overline{\tau}} \subset \mathbb{R}_{>}$} 
    \For{$\tau \in \{1,\ldots,\overline{\tau}\}$}{
        \uIf{$\tau = 1$}{
            For $t \in \mathcal{T}$, solve $Z^{\mathrm{T}}_t(0,0,0;0)$ to solution $(p^{(\tau)}_t,q^{(\tau)}_t,u^{(\tau)}_t)$\;
        }
        \Else{
            For $t \in \mathcal{T}$, solve $Z^{\mathrm{T}}_t(\overline{p}^{(\tau-1)}_t,\overline{q}^{(\tau-1)}_t,\overline{u}^{(\tau-1)}_t;\rho_{\tau})$ to solution $(p^{(\tau)}_t,q^{(\tau)}_t,u^{(\tau)}_t)$\;
        }
        For $j \in \mathcal{J}^{\mathrm{sd}}$, solve $Z^{\mathrm{J}}_j(p^{(\tau)}_j,q^{(\tau)}_j,u^{(\tau)}_j;\rho_{\tau})$ to solution $(\overline{p}^{(\tau)}_j,\overline{q}^{(\tau)}_j,\overline{u}^{(\tau)}_j)$\;
        For $j \in \mathcal{J}^{\mathrm{sh}}$ and $t \in \mathcal{T}$, compute ${\overline{u}^{\mathrm{sh} (\tau)}_{jt}} = Z^{\mathrm{SH}}_{jt}(u^{\mathrm{sh} (\tau)}_{jt})$\;
    }
    \Return{$(\overline{p}^{(\overline{\tau})},\overline{q}^{(\overline{\tau})},\overline{u}^{(\overline{\tau})})$}
\end{algorithm}

\begin{proposition}
\label{prop:DLFeasible}
    Let $(\overline{p},\overline{q},\overline{u})$ satisfy $( \overline{p}_j, \overline{q}_j,\overline{u}_j) \in \mathcal{X}^{\mathrm{uc}}_j$ for all $j \in \mathcal{J}^{\mathrm{sd}}$ and $\overline{u}^{\mathrm{sh}}_{jt} \in \mathcal{X}^{\mathrm{sh}}_{jt}$ for all $j \in \mathcal{J}^{\mathrm{sh}}$ and $t \in \mathcal{T}$.  Then, there is some $(p,q,u)$ such that $(p,q,u,\overline{p},\overline{q},\overline{u})$ is feasible for \eqref{EQ}.
\end{proposition}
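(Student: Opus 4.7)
The plan is to construct the first-block solution $(p,q,u)$ by copying the relevant components from $(\overline{p},\overline{q},\overline{u})$ and then freely choosing the remaining first-block quantities (voltages, angles, tap ratios, phase shifts, branch flows) so that every constraint of \eqref{EQ} is satisfied. Concretely, I would first set
\begin{equation*}
    p^{\mathrm{tot}}=\overline{p}^{\mathrm{tot}},\ p^{\mathrm{su}}=\overline{p}^{\mathrm{su}},\ p^{\mathrm{sd}}=\overline{p}^{\mathrm{sd}},\ q^{\mathrm{tot}}=\overline{q}^{\mathrm{tot}},\ q^{\mathrm{res}}=\overline{q}^{\mathrm{res}},\ u^{\mathrm{on}}=\overline{u}^{\mathrm{on}},\ u^{\mathrm{sh}}=\overline{u}^{\mathrm{sh}},
\end{equation*}
so the copy constraints hold by construction. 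The sets $(\overline{p}_j,\overline{q}_j,\overline{u}_j)\in\mathcal{X}^{\mathrm{uc}}_j$ and $\overline{u}^{\mathrm{sh}}_{jt}\in\mathcal{X}^{\mathrm{sh}}_{jt}$ hold by hypothesis, so the constraints on the second block are immediate.

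Second, I would invoke Lemma~\ref{lemma:setRelaxations} to conclude that $(p_{jt},q_{jt},u^{\mathrm{on}}_{jt})\in\mathcal{Y}^{\mathrm{uc}}_{jt}$ for all $j\in\mathcal{J}^{\mathrm{sd}},t\in\mathcal{T}$, using part~\ref{lemprop:UCRelaxInclusion}, and $u^{\mathrm{sh}}_{jt}\in\mathcal{Y}^{\mathrm{sh}}_{jt}$ for all $j\in\mathcal{J}^{\mathrm{sh}},t\in\mathcal{T}$, using part~\ref{lemprop:ShuntRelaxInclusion}. So the only remaining task is to show $(p_t,q_t,u^{\mathrm{sh}}_t)\in\mathcal{X}^{\mathrm{ac}}_t$ for each $t$, i.e., to exhibit auxiliary control variables and branch flows satisfying \eqref{constr:ACAngleDiff}--\eqref{constr:DCSets} for the chosen $u^{\mathrm{sh}}_t$.

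The third step is the core of the argument. For each $t\in\mathcal{T}$, I would pick any $v_{it}\in[v^{\mathrm{min}}_i,v^{\mathrm{max}}_i]$ for $i\in\mathcal{I}$ (an interval that is non-empty and positive by Assumption~\ref{assump:DataAssumptions}), set $\theta_{it}=0$, and pick any $\tau_{jt}\in[\tau^{\mathrm{min}}_j,\tau^{\mathrm{max}}_j]$ and $\phi_{jt}\in[\phi^{\mathrm{min}}_j,\phi^{\mathrm{max}}_j]$ for $j\in\mathcal{J}^{\mathrm{ac}}$. Then define $\Delta_{jt}$ from \eqref{constr:ACAngleDiff}, and compute $p^{\mathrm{fr}}_{jt},p^{\mathrm{to}}_{jt},q^{\mathrm{fr}}_{jt},q^{\mathrm{to}}_{jt}$ on AC branches and $p^{\mathrm{sh}}_{jt},q^{\mathrm{sh}}_{jt}$ on shunts by the definitional equalities \eqref{constr:ACpfr}--\eqref{constr:ACqto} and \eqref{constr:ShuntReal}--\eqref{constr:ShuntReactive}. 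Finally, for DC branches, choose any value $p^{\mathrm{fr}}_{jt}\in[-p^{\mathrm{max}}_j,p^{\mathrm{max}}_j]$, set $p^{\mathrm{to}}_{jt}=-p^{\mathrm{fr}}_{jt}$ to satisfy \eqref{constr:DCBalance}, and pick $q^{\mathrm{fr}}_{jt}\in[q^{\mathrm{fr,min}}_j,q^{\mathrm{fr,max}}_j]$, $q^{\mathrm{to}}_{jt}\in[q^{\mathrm{to,min}}_j,q^{\mathrm{to,max}}_j]$ from the (non-empty) bound intervals.

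The only potential obstacle is verifying that $\mathcal{X}^{\mathrm{ac}}_t$ is in fact non-empty for any admissible $u^{\mathrm{sh}}_t$ produced by this copying. This works only because $\mathcal{X}^{\mathrm{ac}}_t$ contains no constraint linking branch and shunt flows to the device injections $p^{\mathrm{tot}},q^{\mathrm{tot}}$ (nodal balance is penalized in the objective via $C^{\mathrm{bal}}_{it}$, not imposed as a hard constraint), and no AC branch flow bounds (apparent power limits are likewise penalized via $C^{\mathrm{ac}}_{jt}$). Thus the AC equations act only as definitions of branch and shunt flows in terms of controls that lie in non-empty box domains, and the construction above is always feasible.
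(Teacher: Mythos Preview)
Your proposal is correct and follows essentially the same approach as the paper: copy the device-level variables from $(\overline{p},\overline{q},\overline{u})$ so the copy constraints hold, invoke Lemma~\ref{lemma:setRelaxations} for the relaxed sets $\mathcal{Y}^{\mathrm{uc}}_{jt}$ and $\mathcal{Y}^{\mathrm{sh}}_{jt}$, and then populate the branch and shunt flows so that $(p_t,q_t,u^{\mathrm{sh}}_t)\in\mathcal{X}^{\mathrm{ac}}_t$. The one minor difference is in how the AC controls are obtained: the paper invokes Assumption~\ref{assump:DataAssumptions} to take a feasible solution $(\tilde{p},\tilde{q},\tilde{u},\tilde{v},\tilde{\Delta},\tilde{\theta},\tilde{\tau},\tilde{\phi})$ of \eqref{SC-ACOPF} and borrows the controls $(\tilde{v},\tilde{\theta},\tilde{\tau},\tilde{\phi})$ and branch flows from it, whereas you build the controls directly from the (non-empty) box domains and compute flows from the definitional equalities \eqref{constr:ACpfr}--\eqref{constr:ACqto}, \eqref{constr:ShuntReal}--\eqref{constr:ShuntReactive}. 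Both routes ultimately rely on Assumption~\ref{assump:DataAssumptions} (your box non-emptiness follows from feasibility of \eqref{SC-ACOPF}), and your version makes explicit the structural reason the construction works---namely that $\mathcal{X}^{\mathrm{ac}}_t$ imposes no hard linkage between device injections and branch flows. One small clarification: when you invoke Lemma~\ref{lemma:setRelaxations} you need the full device-level vectors $(p_{jt},q_{jt})$ to coincide with $(\overline{p}_{jt},\overline{q}_{jt})$, not just the copied components, so the non-copied quantities such as $p^{\mathrm{on}}_{jt}$ and $p^{\mathrm{res}}_{jtr}$ should also be inherited from $\overline{p}$; this is implicit in your argument but worth stating.
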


Proposition~\ref{prop:DLFeasible}, combined with Proposition~\ref{prop:EQequivalence}, implies that a feasible solution to the device-level subproblems corresponds to a feasible solution to \eqref{SC-ACOPF} where the copied variables are fixed to their values in the device-level solution.  As the output of Algorithm~\ref{alg:basicAPM} is a feasible solution to the device-level problems, this output corresponds to a feasible solution of \eqref{SC-ACOPF}.  

Theorem~\ref{thm:basicAPMInfeasibility} characterizes the convergence of the algorithm, showing that the infeasibility of the iterates for the copy constraint is bounded by the inverse square root of the penalty parameter.  The result also shows that a similar bound holds on the distance between the copied variables in consecutive iterations.  To state the result, we define $D_t$ as the diameter of the (bounded) feasible region of $Z^{\mathrm{T}}_t$ and $L_t$ as the Lipschitz constant of the functions $R^{\mathrm{T}}_t$ over this region, so that every subgradient of $R^{\mathrm{T}}_t$ on the region is bounded in magnitude by $L_t$.  Similarly, $R_{j}$ is the maximum difference in the value of the function $R^{\mathrm{J}}_j$ over the (bounded) feasible region of $Z^{\mathrm{J}}_j$, and $R := \sum_{j \in \mathcal{J}^{\mathrm{sd}}} R_j$.  Finally, let $d$ be the number of copied variables, i.e., $(p^{\mathrm{tot}},p^{\mathrm{su}},p^{\mathrm{sd}},q^{\mathrm{tot}},q^{\mathrm{res}},u^{\mathrm{on}},u^{\mathrm{sh}}) \in \mathbb{R}^{d}$.  
We require that the nonlinear subproblems $Z^{\mathrm{T}}_t$ are solved to stationarity (i.e., the solution satisfies the Karush-Kuhn-Tucker conditions, see \citealt{nocedal1999numerical} Thm.~12.1) and the mixed-integer linear subproblems $Z^{\mathrm{J}}_j$ are solved to global optimality; these are natural assumptions for each problem class.

\begin{theorem}
    \label{thm:basicAPMInfeasibility}
    Let $(p^{(\tau)},q^{(\tau)},u^{(\tau)},\overline{p}^{(\tau)},\overline{q}^{(\tau)},\overline{u}^{(\tau)})$ be a sequence of iterates generated by Algorithm~\ref{alg:basicAPM}, where the subproblems $Z^{\mathrm{T}}_t$ are solved to stationarity.  Then,
    $$\Gamma_1(p^{(\tau)},q^{(\tau)},u^{(\tau)},\overline{p}^{(\tau-1)},\overline{q}^{(\tau-1)},\overline{u}^{(\tau-1)}) \leq \sqrt{\frac{C}{2\rho_{\tau}}} \quad \forall \tau \in \lBrack \tau \rBrack \setminus \{1\},$$
    where $C:= \sum_{t \in \mathcal{T}} d L_t D_t$.  If the subproblems $Z^{\mathrm{J}}_j$ are solved to global optimality, then 
    \begin{equation*}
    \renewcommand\arraystretch{1.5}
    \begin{array}{rl}
            \Gamma_1(\overline{p}^{(\tau)},\overline{q}^{(\tau)},\overline{u}^{(\tau)},\overline{p}^{(\tau-1)},\overline{q}^{(\tau-1)},\overline{u}^{(\tau-1)}) & \ \leq\  \dfrac{R}{\rho_{\tau}} + \sqrt{\dfrac{2C}{\rho_{\tau}}}\\
            \Gamma_1(p^{(\tau+1)},q^{(\tau+1)},u^{(\tau+1)},p^{(\tau)},q^{(\tau)},u^{(\tau)}) & \ \leq\  \dfrac{R}{\rho_{\tau}} + \sqrt{\dfrac{2C}{\min\{\rho_{\tau},\rho_{\tau+1}\}}}
    \end{array} \quad \forall \tau \in \lBrack \tau \rBrack \setminus \{1\}.
    \end{equation*}
\end{theorem}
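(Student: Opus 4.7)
The plan is to establish the three bounds in sequence---the first via a stationarity argument on $Z^{\mathrm{T}}_t$, and the remaining two via triangle inequalities combined with optimality of $Z^{\mathrm{J}}_j$ and $Z^{\mathrm{SH}}_{jt}$---writing $x^{(\tau)}$ and $\overline{x}^{(\tau)}$ as shorthand for the concatenated copied variables in the two blocks.

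\textbf{First bound.} The key structural observation is that, although $Z^{\mathrm{T}}_t$ is nonconvex through $\mathcal{X}^{\mathrm{ac}}_t$, the copied variables enter these AC constraints only through $u^{\mathrm{sh}}$, and only via the shunt equations \eqref{constr:ShuntReal}--\eqref{constr:ShuntReactive}, which are linear in $u^{\mathrm{sh}}$ with $v$ fixed. The remaining constraints involving copied variables, namely $\mathcal{Y}^{\mathrm{uc}}_{jt}$ and $\mathcal{Y}^{\mathrm{sh}}_{jt}$, are convex, and $R^{\mathrm{T}}_t - \rho_\tau \Gamma_2$ is concave. Eliminating $p^{\mathrm{sh}}, q^{\mathrm{sh}}$ via the shunt equations and holding all other non-copied variables at their iterate values reduces $Z^{\mathrm{T}}_t$ to a concave maximization over a convex set in the copied variables $x^c_t$; by Lemma~\ref{lemma:setRelaxations}, $\overline{x}^{(\tau-1)}_t$ is feasible for this reduced problem. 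Choosing a subgradient $g$ of $R^{\mathrm{T}}_t$ at $x^{(\tau)}_t$ realizing first-order stationarity and testing in the feasible direction $\overline{x}^{(\tau-1)}_t - x^{(\tau)}_t$ then gives $2\rho_\tau \|x^{(\tau)}_t - \overline{x}^{(\tau-1)}_t\|_2^2 \leq \langle g, x^{(\tau)}_t - \overline{x}^{(\tau-1)}_t\rangle \leq L_t D_t$ by Cauchy--Schwarz, the gradient bound $\|g\|_2 \leq L_t$, and the diameter bound. Summing over $t$ and applying $\|v\|_1 \leq \sqrt{d}\|v\|_2$ delivers $\Gamma_1(\cdots) \leq \sqrt{C/(2\rho_\tau)}$.

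\textbf{Second and third bounds.} Global optimality of $Z^{\mathrm{J}}_j$ against the feasible competitor $\overline{x}^{(\tau-1)}_j \in \mathcal{X}^{\mathrm{uc}}_j$ gives $\rho_\tau[\Gamma'_1(x^{(\tau)}_j, \overline{x}^{(\tau)}_j) - \Gamma'_1(x^{(\tau)}_j, \overline{x}^{(\tau-1)}_j)] \leq R_j$, and optimality of $Z^{\mathrm{SH}}_{jt}$ against $\overline{u}^{\mathrm{sh}\,(\tau-1)}_{jt}$ gives the analogous monotonicity for the shunt copy-penalty; summing yields $\Gamma_1(x^{(\tau)}, \overline{x}^{(\tau)}) \leq \Gamma_1(x^{(\tau)}, \overline{x}^{(\tau-1)}) + R/\rho_\tau$. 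The triangle inequality $\Gamma_1(\overline{x}^{(\tau)}, \overline{x}^{(\tau-1)}) \leq \Gamma_1(\overline{x}^{(\tau)}, x^{(\tau)}) + \Gamma_1(x^{(\tau)}, \overline{x}^{(\tau-1)})$ combined with the first bound then produces the second bound $\sqrt{2C/\rho_\tau} + R/\rho_\tau$. The third bound follows from $\Gamma_1(x^{(\tau+1)}, x^{(\tau)}) \leq \Gamma_1(x^{(\tau+1)}, \overline{x}^{(\tau)}) + \Gamma_1(\overline{x}^{(\tau)}, x^{(\tau)})$, applying the first bound at iteration $\tau+1$ to the first term and the intermediate inequality above to the second, together with $\sqrt{a} + \sqrt{b} \leq 2\sqrt{\max\{a,b\}}$.

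\textbf{Main obstacle.} The crux is the first-bound stationarity argument. Because $Z^{\mathrm{T}}_t$ is nonconvex, generic stationarity alone would not deliver the sharp per-period bound $\rho_\tau\|x^{(\tau)}_t - \overline{x}^{(\tau-1)}_t\|_2^2 \leq L_t D_t/2$; the resolution is the structural observation that the AC constraints couple the copied variables only through the linear-in-$u^{\mathrm{sh}}$ shunt relation, which lets $Z^{\mathrm{T}}_t$ be recast as a concave maximization over a convex set in the copied variables after elimination of $p^{\mathrm{sh}}, q^{\mathrm{sh}}$. A secondary technicality is that $R^{\mathrm{T}}_t$ is only piecewise linear in several components, forcing the argument through subdifferentials; this is handled by standard subdifferential calculus for concave maximization over convex sets.
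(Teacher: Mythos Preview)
Your overall strategy matches the paper's: a stationarity argument for the first bound, then triangle inequalities plus optimality of $Z^{\mathrm{J}}_j$ and $Z^{\mathrm{SH}}_{jt}$ for the second and third. The second and third bounds are handled correctly and mirror the paper's use of Lemma~\ref{lemma:basicAPMDevicePenalty}.

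There is, however, a genuine gap in the first-bound argument. You propose to fix \emph{all} non-copied variables at their iterate values and optimize only over the copied variables $x^c_t$, then test against $\overline{x}^{(\tau-1)}_t$. But the copied and non-copied variables in the polyhedral part are coupled: for instance, constraint \eqref{constr:PIRealDef} reads $p^{\mathrm{tot}} = p^{\mathrm{on}} + p^{\mathrm{su}} + p^{\mathrm{sd}}$, where $p^{\mathrm{tot}}, p^{\mathrm{su}}, p^{\mathrm{sd}}$ are copied but $p^{\mathrm{on}}$ is not. Fixing $p^{\mathrm{on}} = p^{\mathrm{on}(\tau)}$ and then setting the copied triple to $(\overline{p}^{\mathrm{tot}(\tau-1)}, \overline{p}^{\mathrm{su}(\tau-1)}, \overline{p}^{\mathrm{sd}(\tau-1)})$ would require $\overline{p}^{\mathrm{on}(\tau-1)} = p^{\mathrm{on}(\tau)}$, which need not hold. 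So Lemma~\ref{lemma:setRelaxations} does \emph{not} give feasibility of $\overline{x}^{(\tau-1)}_t$ for your reduced problem, and the first-order test fails.

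The fix is to freeze only the nonconvex part (the AC variables $v,\theta,\tau,\phi,\Delta$ and the resulting branch flows), leaving the \emph{entire} polyhedral block $x \in \mathcal{X}$---copied and non-copied alike---free. Then the full vector $\overline{x}^{(\tau-1)} \in \mathcal{X}$ by Lemma~\ref{lemma:setRelaxations}, the reduced problem is still concave in $x$ (since $y = A_z(x)$ is linear for fixed $z$), and first-order optimality yields $\nabla F^\top \bigl[(x-\overline{x});(y-\overline{y})\bigr] \geq 2\rho_\tau \|E(x-\overline{x})\|_2^2$. This is precisely what the paper does via the KKT conditions in the generic form \eqref{eq:genericSubproblem}. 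A related imprecision: your bound $\|g\|_2 \leq L_t$ is stated for the gradient of the \emph{reduced} function, which via the chain rule picks up an $A_z^\top$ factor and need not be bounded by $L_t$; what is bounded by $L_tD_t$ is the inner product $\langle g, x-\overline{x}\rangle = \nabla F^\top[(x-\overline{x});(y-\overline{y})]$, using $\|\nabla F\|_2 \leq L_t$ and the full diameter $D_t$.
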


Theorem~\ref{thm:bAPMConvergence} gives that, as the penalty parameter grows arbitrarily large, the iterates converge to a partial optimum for \eqref{EQ}.  A \textit{partial optimum} is a solution $(p^*,q^*,u^*,\overline{p}^*,\overline{q}^*,\overline{u}^*)$ that is (globally) optimal for \eqref{EQ} when each block of variables is fixed at the solution value, i.e., $(p^*,q^*,u^*,\overline{p}^*,\overline{q}^*,\overline{u}^*)$ is optimal for \eqref{EQ} with the variables $(p,q,u)$ fixed to $(p^*,q^*,u^*)$ and is optimal for \eqref{EQ} with the variables $(\overline{p},\overline{q},\overline{u})$ fixed to $(\overline{p}^*,\overline{q}^*,\overline{u}^*)$.  Convergence to partial optima is common in the analysis of alternating methods \citep{gorski2007biconvex}.  
Existing results for penalty alternating direction methods require fixing a penalty parameter and alternating between blocks until convergence is observed; after convergence, the penalty coefficient is increased \citep{geissler2017penalty}. Our result leverages the structure of \eqref{EQ} to allow for changes to the penalty parameter in every iteration, reducing the number of required subproblem evaluations. Theorem~\ref{thm:bAPMConvergence} requires that both sets of subproblems are solved to global optimality, which may be impractical for the nonconvex subproblems~$Z^{\mathrm{T}}_t$.  Nevertheless, the result gives an improved theoretical understanding of the algorithm in this setting.

\begin{theorem}
    \label{thm:bAPMConvergence}
    Suppose that $\overline{\tau} = \infty$ and $\lim_{\tau \rightarrow \infty} \rho_{\tau} = \infty$.  Let $(p^{(\tau)},q^{(\tau)},u^{(\tau)},\overline{p}^{(\tau)},\overline{q}^{(\tau)},\overline{u}^{(\tau)})$ be iterates generated by Algorithm~\ref{alg:basicAPM}, where the subproblems $Z^{\mathrm{T}}_t$ and $Z^{\mathrm{J}}_j$ are solved to global optimality.
    Then, any limit point $(p^*,q^*,u^*,\overline{p}^*,\overline{q}^*,\overline{u}^*)$ of the sequence of iterates is a partial optimum for \eqref{EQ}.
\end{theorem}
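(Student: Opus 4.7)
The plan is to show the limit point is feasible for \eqref{EQ} and then verify partial optimality in each block by constructing, for an arbitrary feasible alternative, a perturbation whose copy projection exactly matches that of a nearby iterate, so the penalty term cancels in the subproblem optimality inequality. Feasibility follows from Theorem~\ref{thm:basicAPMInfeasibility} and $\rho_\tau \to \infty$: along any subsequence $\tau_k$ of iterates converging to the limit, the bounds $\Gamma_1(p^{(\tau_k)}, \overline p^{(\tau_k-1)})$ and $\Gamma_1(\overline p^{(\tau_k)}, \overline p^{(\tau_k-1)})$ both vanish as $\rho_{\tau_k} \to \infty$, so a triangle inequality gives $\Gamma_1(p^{(\tau_k)}, \overline p^{(\tau_k)}) \to 0$ and the copy constraints hold at the limit. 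Combined with closedness of the block constraint sets, including the integrality in $\mathcal{X}^{\mathrm{uc}}_j$ and $\mathcal{X}^{\mathrm{sh}}_{jt}$, this yields feasibility of the limit point.

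For partial optimality in the first block, fix $(\overline p^*, \overline q^*, \overline u^*)$ and pick any feasible alternative $(\hat p, \hat q, \hat u)$ with $\hat p^{\text{copied}} = \overline p^{*,\text{copied}}$. I would construct block-1-feasible points $(\hat p_k, \hat q_k, \hat u_k) \to (\hat p, \hat q, \hat u)$ whose copy projection satisfies $\hat p_k^{\text{copied}} = \overline p^{(\tau_k-1),\text{copied}}$, forcing $\Gamma_2(\hat p_k, \hat q_k, \hat u_k, \overline p^{(\tau_k-1)}, \overline q^{(\tau_k-1)}, \overline u^{(\tau_k-1)}) = 0$. Global optimality of $(p^{(\tau_k)}_t, q^{(\tau_k)}_t, u^{(\tau_k)}_t)$ in each $Z^{\mathrm{T}}_t$, summed over $t$ and using nonnegativity of the penalty, yields $\sum_t R^{\mathrm{T}}_t(p^{(\tau_k)}_t, q^{(\tau_k)}_t) \geq \sum_t R^{\mathrm{T}}_t(\hat p_{k,t}, \hat q_{k,t})$, and taking $\tau_k \to \infty$ with continuity of $R^{\mathrm{T}}_t$ gives $\sum_t R^{\mathrm{T}}_t(p^*_t, q^*_t) \geq \sum_t R^{\mathrm{T}}_t(\hat p_t, \hat q_t)$. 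The analogous second-block argument uses block-2-feasible alternatives $(\tilde{\overline p}_k, \tilde{\overline q}_k, \tilde{\overline u}_k) \to (\tilde{\overline p}, \tilde{\overline q}, \tilde{\overline u})$ with copy projection $\tilde{\overline p}_k^{\text{copied}} = \overline p^{(\tau_k),\text{copied}}$. This matches $\Gamma'_1(p^{(\tau_k)}, q^{(\tau_k)}, u^{(\tau_k)}, \tilde{\overline p}_k, \tilde{\overline q}_k, \tilde{\overline u}_k)$ to $\Gamma'_1(p^{(\tau_k)}, q^{(\tau_k)}, u^{(\tau_k)}, \overline p^{(\tau_k)}, \overline q^{(\tau_k)}, \overline u^{(\tau_k)})$, canceling the penalty terms in the $Z^{\mathrm{J}}_j$ optimality inequalities and delivering the second-block partial optimality by the same limiting argument.

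The main obstacle is the construction of these approximating sequences with exact copy matching. For the first block this is manageable because the relaxations $\mathcal{Y}^{\mathrm{uc}}$ and $\mathcal{Y}^{\mathrm{sh}}$ are continuous and polyhedral, and the AC power flow equations express the non-copied branch and shunt quantities as continuous functions of the copied device and shunt variables, so small perturbations of $\hat p^{\text{copied}}$ admit nearby feasible extensions. The second block is more delicate because $\mathcal{X}^{\mathrm{uc}}$ and $\mathcal{X}^{\mathrm{sh}}$ enforce integrality on commitment and shunt step variables, which cannot match a fractional copy projection of the iterate exactly. This is resolved by noting that $\overline u^{\mathrm{on},(\tau_k)}$ and $\overline u^{\mathrm{sh},(\tau_k)}$ are integer and converge to the integer limits $\overline u^{\mathrm{on},*}$ and $\overline u^{\mathrm{sh},*}$, so for sufficiently large $\tau_k$ they already coincide with the integer components of $\tilde{\overline p}$; only the continuous copied quantities require perturbation, which the polyhedral linking constraints in $\mathcal{X}^{\mathrm{uc}}$ accommodate via a Hoffman-type continuity argument. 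This cancellation mechanism, enabled by the separability of the subproblem objectives into $R^{\mathrm{T}}$ and $R^{\mathrm{J}}$, is the structural feature of \eqref{EQ} that permits updating $\rho_\tau$ every iteration rather than only after inner-loop convergence, as required by classical penalty alternating direction methods.
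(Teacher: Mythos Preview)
Your proposal is correct and follows essentially the same strategy as the paper: establish feasibility of the limit via the $\Gamma_1$ bounds of Theorem~\ref{thm:basicAPMInfeasibility}, then for each block construct a feasible competitor sequence whose copy projection is arranged so that the penalty contribution either cancels or can be dropped, and pass to the limit using continuity. Your second-block argument (integer stabilization followed by a polyhedral perturbation matching $\overline{p}^{(\tau_k),\text{copied}}$) is exactly what the paper does; the paper packages the perturbation step as an explicit line-search lemma rather than invoking Hoffman, but the content is the same.

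The one genuine variation is your first-block construction: you force the competitor's penalty to \emph{zero} by matching $\overline{p}^{(\tau_k-1),\text{copied}}$ and then drop the iterate's penalty by nonnegativity, whereas the paper matches the competitor's copied variables to $p^{(\tau_k),\text{copied}}$ so the two penalty terms are \emph{equal} and cancel exactly. Both work, and both rely only on polyhedrality of the set carrying the copied variables. Your justification for why the nonconvex AC constraints do not obstruct the perturbation is slightly off, though: the branch flows are \emph{not} functions of the copied device variables. The clean reason, which the paper makes explicit through its stylized decomposition $x\in\mathcal{X}$ (polyhedral, contains all copied variables), $z\in\mathcal{Z}$ (nonconvex AC part), $y=A_z(x)$, is that one may hold $z$ fixed at the competitor's value and perturb only $x$ within the polyhedron; the shunt flows $y$ then adjust linearly and the branch flows do not move at all.
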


Although the basic pADM algorithm satisfies desirable convergence properties, it often fails to efficiently generate high-quality feasible solutions for \eqref{SC-ACOPF}.  In Section~\ref{sec:improve}, we highlight several insufficiencies of this algorithm and propose approaches to improve performance.

\section{Tailoring the pADM: Improvements and Heuristics}
\label{sec:improve}

Algorithm~\ref{alg:basicAPM} gives a basic penalty alternating direction framework that allows for spatial and temporal decomposition of \eqref{SC-ACOPF}.  In this section, we adapt the algorithm to improve the solution quality and reduce the computation time.  Section~\ref{sec:improve_finalsolve} adds a set of final subproblems that fix the discrete variables and reoptimize over the continuous variables.  Section~\ref{sec:improve_soc} accelerates the iterations by replacing the nonconvex temporally-decomposed block subproblems with their second-order cone (SOC) relaxations.  Section~\ref{sec:improve_contingency} reduces the number of contingencies included in the subproblems via a screening algorithm.  Section~\ref{sec:improve_objective} adds terms to the block subproblem objectives to more closely approximate the true objective function.

\subsection{Exact Feasibility with Final Subproblems}
\label{sec:improve_finalsolve}

The convergence property in  Theorem~\ref{thm:basicAPMInfeasibility} establishes that, as the penalty coefficient~$\rho$ grows, the iterates of Algorithm~\ref{alg:basicAPM} converge towards feasibility for the copy constraints in the formulation \eqref{EQ}.  However, this convergence may be slow in practice, requiring many iterations and excessively large values for~$\rho$, which may introduce computational difficulties and numerical instability.  We ensure exact feasibility of the final solution by  fixing the discrete unit commitment variables~$u$ to their value from the last-iteration device-level block subproblems $Z^{\mathrm{J}}_j$ and solving the original formulation \eqref{SC-ACOPF}.  The resulting restricted problem is temporally coupled only by the ramping constraints~\eqref{constr:Ramp} and energy violation penalty functions~$C^{\mathrm{e}}_j$.  Using the device-level power injection solution from the last iteration of~$Z^{\mathrm{J}}_j$, we construct restrictions of the linking elements that allow for temporal decomposition of the final problem.

\subsubsection{Ramping Constraint Restriction}
\label{sec:safeRamp}
When solving the final models with fixed unit commitment decisions $\overline{u}$, fixing the real power injection variables $p^{\mathrm{tot}}_{jt}$ to the solution $\overline{p}^{\mathrm{tot}}_{jt}$ from the device-level block subproblems $Z^{\mathrm{J}}_j$ ensures feasibility for the ramping constraints~\eqref{constr:Ramp} and allows for temporal separability but empirically leads to poor solution quality.  Allowing real power injection to deviate from the device-level subproblem solution introduces flexibility, which leads to better solutions and improves the computational performance of the interior point solver. We introduce such flexibility by restricting $p^\mathrm{tot}_{jt}$ to lie in an interval around the solution $\overline{p}^{\mathrm{tot}}_{jt}$:
\begin{equation}
    \label{eq:rampSafeConstraint}
    \begin{aligned}
        & \overline{p}^{\mathrm{tot}}_{jt} - \delta^-_{jt} \leq p^{\mathrm{tot}}_{jt} \leq \overline{p}^{\mathrm{tot}}_{jt} + \delta^+_{jt} & \quad \forall j \in \mathcal{J}^{\mathrm{sd}},\ t \in \mathcal{T}.
    \end{aligned}
\end{equation}
For $j \in \mathcal{J}^{\mathrm{sd}}$, the parameters $\delta_j$ are optimal solutions to a set of auxiliary linear programs:
\begin{subequations}
\label{eq:rampSafeLP}
    \begin{align}
        H_j(\overline{p}_j,\overline{u}_j) := \argmax_{\delta^-_j,\delta^+_j} \quad & \sum_{t\in \mathcal{T}} (\delta^-_{jt} + \delta^+_{jt} ) \label{eq:rampSafeObj}\\
        \text{s.t.} \quad & \delta^-_{jt} + \delta^+_{j,t-1} \leq r^{\mathrm{d}}_{jt} + (\overline{p}^{\mathrm{tot}}_{jt} - \overline{p}^{\mathrm{tot}}_{j,t-1}) & \ \forall t \in \mathcal{T} ,\label{constr:rampSafeDown}\\
        & \delta^-_{j,t-1} + \delta^+_{jt} \leq r^{\mathrm{u}}_{jt} - (\overline{p}^{\mathrm{tot}}_{jt} - \overline{p}^{\mathrm{tot}}_{j,t-1}) & \ \forall t \in \mathcal{T}, \label{constr:rampSafeUp}\\
        & \delta^-_{jt} \geq \frac{1}{2} \min \left\{r^{\mathrm{d}}_{jt} + (\overline{p}^{\mathrm{tot}}_{jt} - \overline{p}^{\mathrm{tot}}_{j,t-1}),\ r^{\mathrm{u}}_{j,t+1} - (\overline{p}^{\mathrm{tot}}_{j,t+1} - \overline{p}^{\mathrm{tot}}_{jt}) \right\} & \ \forall t \in \mathcal{T}, \label{constr:rampSafeDownHeur}\\
        & \delta^+_{jt} \geq \frac{1}{2} \min \left\{r^{\mathrm{u}}_{jt} - (\overline{p}^{\mathrm{tot}}_{jt} - \overline{p}^{\mathrm{tot}}_{j,t-1}),\ r^{\mathrm{d}}_{j,t+1} + (\overline{p}^{\mathrm{tot}}_{j,t+1} - \overline{p}^{\mathrm{tot}}_{jt}) \right\}& \ \forall t \in \mathcal{T}, \label{constr:rampSafeUpHeur}\\
        & r^{\mathrm{d}}_{jt} := d_t \left ( p^{\mathrm{rd}}_{j} \overline{u}^{\mathrm{on}}_{jt} + p^{\mathrm{rd,sd}}_j (1 - \overline{u}^{\mathrm{on}}_{jt}) \right ) & \ \forall t \in \mathcal{T}, \label{constr:rampSafeDownRangeDef}\\ 
        & r^{\mathrm{u}}_{jt} := d_t \left ( p^{\mathrm{ru}}_j (\overline{u}^{\mathrm{on}}_{jt} - \overline{u}^{\mathrm{su}}_{jt}) + p^{\mathrm{ru,su}}_j (\overline{u}^{\mathrm{su}}_{jt} - \overline{u}^{\mathrm{on}}_{jt} + 1) \right ) & \ \forall t \in \mathcal{T}. \label{constr:rampSafeUpRangeDef}
    \end{align}
\end{subequations}
These problems maximize the cumulative length of all intervals while enforcing that any solution within the intervals is feasible for the ramping constraints.  
Constraints \eqref{constr:rampSafeDown}--\eqref{constr:rampSafeUp} enforce feasibility of the interval for the original ramping constraints \eqref{constr:Ramp}, where parameters $r^{\mathrm{d}}_{jt}$ and $r^{\mathrm{u}}_{jt}$ give the maximum ramp down and up power under unit commitment solution $\overline{u}$. To ease notation, we fix $\delta^-_{j0} = \delta^+_{j0} = 0$.  Constraints \eqref{constr:rampSafeDownHeur}--\eqref{constr:rampSafeUpHeur} implement a heuristic that requires the interval lengths above and below the solution $\overline{p}^{\mathrm{tot}}_{jt}$ to be at least as large as in the solution that distributes the ramping slack evenly across time periods.

Proposition~\ref{prop:rampLPFeas} states that the problems \eqref{eq:rampSafeLP} are feasible and the optimal parameters $(\delta^-_{j},\delta^+_{j})$ are nonnegative.  Proposition~\ref{prop:rampLPValid} shows that, when the variables $u$ are fixed to $\overline{u}$, the constraint \eqref{eq:rampSafeConstraint} is a restriction of \eqref{constr:Ramp}, and thus any solution $(p,\overline{p},\overline{u},\delta)$ satisfying \eqref{eq:rampSafeConstraint}--\eqref{eq:rampSafeLP} is feasible for~\eqref{constr:Ramp}.

\begin{proposition}
    \label{prop:rampLPFeas}
    Let $j \in \mathcal{J}^{\mathrm{sd}}$ and $(\overline{p},\overline{q},\overline{u})$ be a solution such that $(\overline{p}_j,\overline{q}_j,\overline{u}_j) \in \mathcal{X}_j^{\mathrm{uc}}$.  Then, problem \eqref{eq:rampSafeLP} is feasible and bounded, and every $(\delta^-_j,\delta^+_j)$ satisfying \eqref{constr:rampSafeDown}--\eqref{constr:rampSafeUpHeur} is nonnegative.
\end{proposition}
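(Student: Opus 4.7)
\textbf{Proof plan for Proposition~\ref{prop:rampLPFeas}.}

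The plan is to establish nonnegativity of the right-hand sides of the heuristic constraints \eqref{constr:rampSafeDownHeur}--\eqref{constr:rampSafeUpHeur} first, then to use this fact for both the nonnegativity claim and to construct a feasible solution that certifies feasibility, and finally to deduce boundedness from the linear upper bounds on each variable.

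\emph{Step 1 (nonnegativity of the heuristic right-hand sides).} Since $(\overline{p}_j,\overline{q}_j,\overline{u}_j)\in\mathcal{X}^{\mathrm{uc}}_j$, the pair $(\overline{p}^{\mathrm{tot}},\overline{u})$ satisfies the original ramping constraints~\eqref{constr:Ramp}. Rewriting these with the abbreviations $r^{\mathrm{d}}_{jt}$ and $r^{\mathrm{u}}_{jt}$ from \eqref{constr:rampSafeDownRangeDef}--\eqref{constr:rampSafeUpRangeDef} gives
$r^{\mathrm{d}}_{jt} + (\overline{p}^{\mathrm{tot}}_{jt}-\overline{p}^{\mathrm{tot}}_{j,t-1})\ge 0$ and $r^{\mathrm{u}}_{jt} - (\overline{p}^{\mathrm{tot}}_{jt}-\overline{p}^{\mathrm{tot}}_{j,t-1})\ge 0$ for every $t\in\mathcal{T}$. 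Consequently both minima appearing on the right-hand sides of \eqref{constr:rampSafeDownHeur}--\eqref{constr:rampSafeUpHeur} are nonnegative. In particular, any $(\delta^-_j,\delta^+_j)$ satisfying these two constraints has $\delta^-_{jt},\delta^+_{jt}\ge 0$ for all $t$, together with the convention $\delta^-_{j0}=\delta^+_{j0}=0$ from the statement. This proves the nonnegativity claim.

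\emph{Step 2 (feasibility).} I propose the explicit candidate in which $\delta^-_{jt}$ and $\delta^+_{jt}$ are set equal to the right-hand sides of \eqref{constr:rampSafeDownHeur} and \eqref{constr:rampSafeUpHeur} respectively (with the natural boundary convention above; terms with index $t+1$ beyond $T$ are simply dropped, which only enlarges each minimum). By Step~1 this candidate is nonnegative. The heuristic lower bounds \eqref{constr:rampSafeDownHeur}--\eqref{constr:rampSafeUpHeur} hold by construction. For the ramp-down linking constraint \eqref{constr:rampSafeDown}, observe that $\delta^-_{jt}\le \tfrac{1}{2}\bigl(r^{\mathrm{d}}_{jt}+(\overline{p}^{\mathrm{tot}}_{jt}-\overline{p}^{\mathrm{tot}}_{j,t-1})\bigr)$ directly from the first argument of its defining min, and $\delta^+_{j,t-1}\le \tfrac{1}{2}\bigl(r^{\mathrm{d}}_{jt}+(\overline{p}^{\mathrm{tot}}_{jt}-\overline{p}^{\mathrm{tot}}_{j,t-1})\bigr)$ from the second argument of the min defining $\delta^+_{j,t-1}$ (with $t'=t-1$, so the ``next period'' term matches). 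Adding the two gives \eqref{constr:rampSafeDown}. A symmetric argument, using the first argument of the min for $\delta^+_{jt}$ and the second argument of the min for $\delta^-_{j,t-1}$, yields \eqref{constr:rampSafeUp}. Hence the candidate is feasible.

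\emph{Step 3 (boundedness).} From Step~1 every feasible $\delta^+_{j,t-1}$ is nonnegative, so \eqref{constr:rampSafeDown} gives $\delta^-_{jt}\le r^{\mathrm{d}}_{jt}+(\overline{p}^{\mathrm{tot}}_{jt}-\overline{p}^{\mathrm{tot}}_{j,t-1})$; analogously \eqref{constr:rampSafeUp} yields $\delta^+_{jt}\le r^{\mathrm{u}}_{jt}-(\overline{p}^{\mathrm{tot}}_{jt}-\overline{p}^{\mathrm{tot}}_{j,t-1})$. Summing these finite per-period upper bounds shows that the objective~\eqref{eq:rampSafeObj} is bounded above on the feasible region, and combined with Step~2 this implies the LP is feasible and bounded.

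The arguments are short; the only care needed is the index bookkeeping in Step~2, where the second argument of each min is specifically designed to pair cleanly with the first argument of its neighbor's min so that the linking constraints \eqref{constr:rampSafeDown}--\eqref{constr:rampSafeUp} are each satisfied as the sum of two halves of the same quantity. I do not expect any genuine obstacle beyond this matching.
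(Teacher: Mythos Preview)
Your proposal is correct and follows essentially the same approach as the paper: both use feasibility of $(\overline{p}_j,\overline{u}_j)$ for the original ramping constraints to establish nonnegativity of the heuristic right-hand sides, construct the identical candidate that sets $\delta^\pm_{jt}$ equal to those right-hand sides and verify \eqref{constr:rampSafeDown}--\eqref{constr:rampSafeUp} by pairing the two halves of each min, and then combine nonnegativity with the linking constraints to deduce boundedness. The only cosmetic difference is the order in which you present the steps (nonnegativity before feasibility rather than after), and your explicit handling of the $t{+}1$ boundary term is a minor bookkeeping detail the paper leaves implicit.
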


\begin{proposition}
    \label{prop:rampLPValid}
    Let $(\overline{p},\overline{u},\delta)$ be a solution that satisfies \eqref{constr:rampSafeDown}--\eqref{constr:rampSafeUp} for all $j \in \mathcal{J}^{\mathrm{sd}}$.  Then, for any $p$ satisfying \eqref{eq:rampSafeConstraint}, the solution $(p,\overline{u})$ is feasible for \eqref{constr:Ramp}.
\end{proposition}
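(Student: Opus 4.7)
The plan is to verify the two ramping inequalities in \eqref{constr:Ramp} pointwise by combining the interval bounds \eqref{eq:rampSafeConstraint} with the feasibility inequalities \eqref{constr:rampSafeDown}--\eqref{constr:rampSafeUp}, noting that with $u = \overline{u}$ the right-hand sides of \eqref{constr:RampDown}--\eqref{constr:RampUp} are exactly the quantities $r^{\mathrm{d}}_{jt}$ and $r^{\mathrm{u}}_{jt}$ defined in \eqref{constr:rampSafeDownRangeDef}--\eqref{constr:rampSafeUpRangeDef}.

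First I would fix $j \in \mathcal{J}^{\mathrm{sd}}$ and $t \in \mathcal{T}$ and establish the ramp-down direction. From \eqref{eq:rampSafeConstraint}, $p^{\mathrm{tot}}_{jt} \geq \overline{p}^{\mathrm{tot}}_{jt} - \delta^-_{jt}$ and $p^{\mathrm{tot}}_{j,t-1} \leq \overline{p}^{\mathrm{tot}}_{j,t-1} + \delta^+_{j,t-1}$, so
\[
p^{\mathrm{tot}}_{jt} - p^{\mathrm{tot}}_{j,t-1} \ \geq\ (\overline{p}^{\mathrm{tot}}_{jt} - \overline{p}^{\mathrm{tot}}_{j,t-1}) - (\delta^-_{jt} + \delta^+_{j,t-1}).
\]
Rearranging \eqref{constr:rampSafeDown} to $\delta^-_{jt} + \delta^+_{j,t-1} - (\overline{p}^{\mathrm{tot}}_{jt} - \overline{p}^{\mathrm{tot}}_{j,t-1}) \leq r^{\mathrm{d}}_{jt}$ and substituting yields $p^{\mathrm{tot}}_{jt} - p^{\mathrm{tot}}_{j,t-1} \geq -r^{\mathrm{d}}_{jt}$, which by \eqref{constr:rampSafeDownRangeDef} is exactly \eqref{constr:RampDown} evaluated at $u = \overline{u}$.

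Next I would do the symmetric argument for the ramp-up direction. Using $p^{\mathrm{tot}}_{jt} \leq \overline{p}^{\mathrm{tot}}_{jt} + \delta^+_{jt}$ and $p^{\mathrm{tot}}_{j,t-1} \geq \overline{p}^{\mathrm{tot}}_{j,t-1} - \delta^-_{j,t-1}$ gives
\[
p^{\mathrm{tot}}_{jt} - p^{\mathrm{tot}}_{j,t-1} \ \leq\ (\overline{p}^{\mathrm{tot}}_{jt} - \overline{p}^{\mathrm{tot}}_{j,t-1}) + (\delta^+_{jt} + \delta^-_{j,t-1}),
\]
and combining with \eqref{constr:rampSafeUp} rearranged as $\delta^+_{jt} + \delta^-_{j,t-1} + (\overline{p}^{\mathrm{tot}}_{jt} - \overline{p}^{\mathrm{tot}}_{j,t-1}) \leq r^{\mathrm{u}}_{jt}$ yields $p^{\mathrm{tot}}_{jt} - p^{\mathrm{tot}}_{j,t-1} \leq r^{\mathrm{u}}_{jt}$, matching \eqref{constr:RampUp} via \eqref{constr:rampSafeUpRangeDef}.

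Finally I would address the boundary case $t = 1$: the convention $\delta^-_{j0} = \delta^+_{j0} = 0$ stated after \eqref{eq:rampSafeLP} collapses the interval \eqref{eq:rampSafeConstraint} at $t=0$ to the single point $\overline{p}^{\mathrm{tot}}_{j0}$, which equals the data value $p^{\mathrm{tot}}_{j0}$ used on the right-hand side of the original ramping constraints, so the above chain goes through unchanged at $t=1$. There is no genuine obstacle; the only thing to be careful about is keeping track of the signs when moving the $\overline{p}^{\mathrm{tot}}_{jt} - \overline{p}^{\mathrm{tot}}_{j,t-1}$ terms between the interval bounds and \eqref{constr:rampSafeDown}--\eqref{constr:rampSafeUp}, which is why the two auxiliary constraints are written with opposite signs on this difference.
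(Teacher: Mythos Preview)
Your proof is correct and follows essentially the same approach as the paper: bound $p^{\mathrm{tot}}_{jt} - p^{\mathrm{tot}}_{j,t-1}$ from below and above using \eqref{eq:rampSafeConstraint}, then apply \eqref{constr:rampSafeDown}--\eqref{constr:rampSafeUp} together with the definitions \eqref{constr:rampSafeDownRangeDef}--\eqref{constr:rampSafeUpRangeDef} to recover \eqref{constr:RampDown}--\eqref{constr:RampUp} at $u=\overline{u}$. Your explicit treatment of the $t=1$ boundary case is a nice addition; the paper relies on the stated convention $\delta^\pm_{j0}=0$ without spelling it out.
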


By Proposition~\ref{prop:rampLPValid}, the heuristic cuts \eqref{constr:rampSafeDownHeur}--\eqref{constr:rampSafeUpHeur} are not needed to enforce a restriction of \eqref{constr:Ramp}.  In practice, without these cuts, problem~\eqref{eq:rampSafeLP} yields many near-zero interval lengths, which does not improve device controllability.  One alternative is to maximize $\log ( \prod_{t} (\delta^-_{jt} + \delta^+_{jt} + 1) ) = \sum_{t} \log(\delta^-_{jt} + \delta^+_{jt} + 1)$, representing the \textit{volume} generated by the intervals.  This objective decreases the fraction of near-zero interval lengths but renders the problem nonlinear.  Adding the heuristic cuts yields solutions that achieve most of the volume and non-zero interval lengths obtained by the volume objective and preserves linearity; see Section~\ref{sec:resultsAblationSafeRamp} for quantitative results.

\subsubsection{Energy Injection Limit Restriction}
\label{sec:energyMinMax}
We similarly design a restriction of the energy violation penalty functions $C^{\mathrm{e}}_j$ defined in Section~\ref{sec:modelEnergyLim}.  As \eqref{SC-ACOPF} minimizes the penalty, we define a restriction as an upper bound on the function. The bound will be the sum of time-separable functions $\overline{C}^{\mathrm{e}}_{jt}$ centered around some candidate injection $\overline{p}$:
\begin{equation*}
    \begin{aligned}
        && \overline{C}^{\mathrm{e}}_{jt}(p;\overline{p}) := c^{\mathrm{e}} d_t \left ( \sum_{W \in \mathcal{W}^{\mathrm{en,min}}_{jt}} \left [\frac{e^{\mathrm{min}}_j(W) - \sum_{t' \in W} d_{t'} \overline{p}^{\mathrm{tot}}_{jt'}}{\sum_{t' \in W} d_{t'}} + \overline{p}^{\mathrm{tot}}_{jt} - p^{\mathrm{tot}}_{jt} \right ]^+ \right .\\
        && \left . + \sum_{W \in \mathcal{W}^{\mathrm{en,max}}_{jt}} \left [\frac{- e^{\mathrm{max}}_j(W) + \sum_{t' \in W} d_{t'} \overline{p}^{\mathrm{tot}}_{jt'}}{\sum_{t' \in W} d_{t'}} + p^{\mathrm{tot}}_{jt} - \overline{p}^{\mathrm{tot}}_{jt} \right ]^+ \right) \quad & \forall j \in \mathcal{J}^{\mathrm{sd}},\ t \in \mathcal{T},
    \end{aligned}
\end{equation*}
where $\mathcal{W}^{\mathrm{en,min}}_{jt} := \{W \in \mathcal{W}^{\mathrm{en,min}}_{j}\,:\, t \in W\}$ is the set of minimum energy periods that intersect with time $t$; $\mathcal{W}^{\mathrm{en,max}}_{jt}$ is defined analogously. Proposition~\ref{prop:energyViolationRestriction} states that the functions $\overline{C}^{\mathrm{e}}_{jt}$ provide the desired bound. In practice, the  injection $\overline{p}$ will be a feasible solution to the device-level subproblems.

\begin{proposition}
    \label{prop:energyViolationRestriction}
    For any $\overline{p}$, $p$, and $j \in \mathcal{J}^{\mathrm{sd}}$, it holds that $C^{\mathrm{e}}_j(p) \leq \sum_{t \in \mathcal{T}} \overline{C}^{\mathrm{e}}_{jt}(p;\overline{p}).$
\end{proposition}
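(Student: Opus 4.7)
The plan is to show the desired inequality window-by-window, after swapping the order of summation so that the time-separable bound can be compared directly to the window-coupled penalty $C^{\mathrm{e}}_j$. Since $\sum_{t \in \mathcal{T}}\overline{C}^{\mathrm{e}}_{jt}(p;\overline{p})$ and $C^{\mathrm{e}}_j(p)$ both decompose additively into a ``min'' part (over $\mathcal{W}^{\mathrm{en,min}}_j$) and a ``max'' part (over $\mathcal{W}^{\mathrm{en,max}}_j$) with identical coefficient $c^{\mathrm{e}}$, and these two parts are structurally symmetric (swapping the sign of $p^{\mathrm{tot}}_{jt}-\overline{p}^{\mathrm{tot}}_{jt}$ and the role of $e^{\mathrm{min}}_j$ vs.\ $e^{\mathrm{max}}_j$), it suffices to treat the minimum-energy side; the maximum-energy side then follows by an analogous argument.

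For the minimum-energy side I would use the identity $W \in \mathcal{W}^{\mathrm{en,min}}_{jt} \iff t \in W$ (with $W \in \mathcal{W}^{\mathrm{en,min}}_j$) to exchange the sums, obtaining
\begin{equation*}
\sum_{t \in \mathcal{T}} d_t \sum_{W \in \mathcal{W}^{\mathrm{en,min}}_{jt}} \!\!\left[ \frac{e^{\mathrm{min}}_j(W) - \sum_{t' \in W} d_{t'} \overline{p}^{\mathrm{tot}}_{jt'}}{\sum_{t' \in W} d_{t'}} + \overline{p}^{\mathrm{tot}}_{jt} - p^{\mathrm{tot}}_{jt} \right]^+
= \sum_{W \in \mathcal{W}^{\mathrm{en,min}}_j} \sum_{t \in W} d_t \left[ a^W_t \right]^+,
\end{equation*}
where $a^W_t$ denotes the bracketed expression. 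Fixing a window $W$ and letting $D := \sum_{t \in W} d_t > 0$, a direct calculation shows that the weighted sum $\sum_{t \in W} d_t\, a^W_t$ telescopes: the constant term contributes $D \cdot (e^{\mathrm{min}}_j(W) - \sum_{t'} d_{t'}\overline{p}^{\mathrm{tot}}_{jt'})/D$, which cancels against the $\sum_{t} d_t \overline{p}^{\mathrm{tot}}_{jt}$ term, leaving exactly $e^{\mathrm{min}}_j(W) - \sum_{t \in W} d_t p^{\mathrm{tot}}_{jt}$. This identity is the crucial algebraic step that ties the bound to the original penalty.

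The remaining step is to invoke the convexity of $[\cdot]^+$ (equivalently, the elementary inequality $\sum_{t \in W} d_t [a^W_t]^+ \geq [\sum_{t \in W} d_t a^W_t]^+$, which follows since each summand on the left is nonnegative and at least as large as $d_t a^W_t$). Combined with the identity above, this gives
\begin{equation*}
\sum_{t \in W} d_t \left[ a^W_t \right]^+ \;\geq\; \left[ e^{\mathrm{min}}_j(W) - \sum_{t \in W} d_t p^{\mathrm{tot}}_{jt} \right]^+ \quad \forall W \in \mathcal{W}^{\mathrm{en,min}}_j.
\end{equation*}
Summing over $W$, applying the parallel argument to the maximum-energy windows, and multiplying by $c^{\mathrm{e}} \geq 0$ yields the claim.

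No single step is a genuine obstacle: the main point of care is the bookkeeping when swapping the order of summation and verifying that the constant shift $\overline{p}^{\mathrm{tot}}_{jt}$ introduced inside each $[\cdot]^+$ is precisely what is needed to make the weighted average of the bracket arguments recover the original window-level energy imbalance. The rest is a one-line application of the convexity of the positive part.
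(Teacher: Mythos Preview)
Your proposal is correct and follows essentially the same route as the paper: swap the order of summation via the identity $W \in \mathcal{W}^{\mathrm{en,min}}_{jt} \Leftrightarrow t \in W$, apply the elementary inequality $\bigl[\sum_{t \in W} d_t a^W_t\bigr]^+ \le \sum_{t \in W} d_t [a^W_t]^+$ (the paper states this as $[\sum_i x_i]^+ \le \sum_i [x_i]^+$), and verify that $\sum_{t \in W} d_t a^W_t = e^{\mathrm{min}}_j(W) - \sum_{t \in W} d_t p^{\mathrm{tot}}_{jt}$. The maximum-energy side and the final multiplication by $c^{\mathrm{e}} \ge 0$ are handled identically.
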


\subsubsection{Final Subproblem Formulation}
Let $(\overline{p},\overline{q},\overline{u})$ be a feasible solution to the device-level block subproblems, i.e., $(\overline{p}_j,\overline{q}_j,\overline{u}_j) \in \mathcal{X}_j^{\mathrm{uc}}$ for all $j \in \mathcal{J}^{\mathrm{sd}}$ and $\overline{u}^{\mathrm{sh}}_{jt} \in \mathcal{X}^{\mathrm{sh}}_{jt}$ for all $j \in \mathcal{J}^{\mathrm{sh}}$ and $t \in \mathcal{T}$.  Let $\delta$ satisfy \eqref{eq:rampSafeLP}. We define a final objective function $R^{\mathrm{F}}_t$ that excludes unit commitment and contingency costs and applies the energy restriction:
\begin{equation*}
    \begin{aligned}
        R^{\mathrm{F}}_t(p,q;\overline{p}) & := \sum_{j \in \mathcal{J}^{\mathrm{cs}}} R^{\mathrm{pow}}_{jt}(p) - \sum_{j \in \mathcal{J}^{\mathrm{pr}}} C^{\mathrm{pow}}_{jt}(p) - C^{\mathrm{res}}_t(p,q) - \sum_{j \in \mathcal{J}^{\mathrm{ac}}} C^{\mathrm{ac}}_{jt}(p,q) - \sum_{i \in \mathcal{I}} C^{\mathrm{bal}}_{it}(p,q) - \sum_{j \in \mathcal{J}^{\mathrm{sd}}} \overline{C}^{\mathrm{e}}_{jt}(p;\overline{p}).
    \end{aligned}
\end{equation*}
For $t \in \mathcal{T}$, the temporally-separate final subproblems that incorporate the ramping and energy restrictions are defined by
\begin{equation*}
    \label{FSP}
	\begin{aligned}
	   Z^{\mathrm{F}}_t(\overline{p},\overline{q},\overline{u},\emptyset;\delta) \ :=\  \max_{p_t,q_t,u_t} \quad & R^{\mathrm{F}}_t(p_t,q_t;\overline{p})\\
        \text{s.t.} \quad & \eqref{constr:PIReal} - \eqref{constr:PIReactive},\, \eqref{eq:rampSafeConstraint},\ (p_t,q_t,u^{\mathrm{sh}}_t) \in \mathcal{X}^{\mathrm{ac}}_t,\ u = \overline{u}.
	\end{aligned}
\end{equation*}
The final constraint fixes the unit commitment variables to the device-level solution $\overline{u}$.

Propositions~\ref{prop:EQequivalence}~and~\ref{prop:DLFeasible} imply that there is some feasible solution for \eqref{SC-ACOPF} with the copied variables fixed to the device-level solution; such a solution satisfies the ramping restriction~\eqref{eq:rampSafeConstraint} and thus the final subproblems $Z^{\mathrm{F}}_t$ are feasible.  The final argument $\delta_t$ of $Z^{\mathrm{F}}_t$ denotes the dependence of the ramping restriction on the interval parameters from the auxiliary problems~\eqref{eq:rampSafeLP}.  The argument $\emptyset$ indicates that the final subproblems exclude contingency costs. Section~\ref{sec:improve_contingency} expands this notation to incorporate contingencies.

\subsection{Second-Order Cone Relaxation}
\label{sec:improve_soc}

As the final subproblems $Z^{\mathrm{F}}_t$ only require a feasible solution to the device-level subproblems $Z^{\mathrm{J}}_j$, we can use a proxy in place of the temporally-decomposed subproblems~$Z^{\mathrm{T}}_t$ in the iterative algorithm. We choose an SOC relaxation similar to those by \cite{jabr2006radial} and \cite{kocuk2016strong}.  The relaxation accelerates each iteration and yields subproblems that can be solved to global optimality.  Feasibility for the original ACOPF constraints is restored by solving the final subproblems. The relaxation is applied to the constraints \eqref{constr:ACAngleDiff}--\eqref{constr:ShuntSets}.  We define the change of variables
\begin{equation*}
    \begin{gathered}
        c_{jt} = \frac{1}{\tau_{jt}} v_{i_j t} v_{i'_j t} \cos (\Delta_{jt}), \quad s_{jt} = \frac{1}{\tau_{jt}} v_{i_j t} v_{i'_j t} \sin (\Delta_{jt}), \quad
        \omega_{it} = v_{it}^2, \quad \mu_{jt} = \frac{v_{i_jt}^2}{\tau_{jt}^2}, \quad \text{and} \quad  \mu^{\mathrm{sh}}_{jt} = u^{\mathrm{sh}}_{jt} v^2_{i_j t}.\\
    \end{gathered}
\end{equation*}
This transformation is invertible if and only if $c_{jt}^2 + s_{jt}^2 = \mu_{j t} \omega_{i'_j t}$ and $\Delta_{jt} = \arctan(s_{jt} / c_{jt})$.  To convexify these conditions, we relax the first constraint to $c_{jt}^2 + s_{jt}^2 \leq \mu_{j t} \omega_{i'_j t}$ and remove the second constraint, eliminating the angle differences $\Delta_{jt}$ from the model. 
Then, the following defines the SOC relaxation:
\begin{subequations}
    \label{constr:SOC}
    \begin{align}
        & p^{\mathrm{fr}}_{jt} = g_{i_j j} \mu_{jt} - g_j c_{jt} - b_j s_{jt} \quad & \forall j \in \mathcal{J}^{\mathrm{ac}},\ t \in \mathcal{T}, \label{constr:SOCpfr}\\
        & p^{\mathrm{to}}_{jt} = g_{i'_j j} \omega_{i'_jt} - g_j c_{jt} + b_j s_{jt} \quad & \forall j \in \mathcal{J}^{\mathrm{ac}},\ t \in \mathcal{T}, \label{constr:SOCpto}\\
        & q^{\mathrm{fr}}_{jt} = -b_{i_j j} \mu_{jt} + b_j c_{jt} - g_j s_{jt} \quad & \forall j \in \mathcal{J}^{\mathrm{ac}},\ t \in \mathcal{T}, \label{constr:SOCqfr}\\
        & q^{\mathrm{to}}_{jt} = -b_{i'_j j} \omega_{i'_j t} + b_j c_{jt} + g_j s_{jt} \quad & \forall j \in \mathcal{J}^{\mathrm{ac}},\ t \in \mathcal{T}, \label{constr:SOCqto}\\
        & p^{\mathrm{sh}}_{jt} = g^{\mathrm{sh}}_j \mu^{\mathrm{sh}}_{jt} \quad & \forall j \in \mathcal{J}^{\mathrm{sh}},\ t \in \mathcal{T}, \label{constr:SOCShuntReal}\\
        & q^{\mathrm{sh}}_{jt} = -b^{\mathrm{sh}}_j \mu^{\mathrm{sh}}_{jt} \quad & \forall j \in \mathcal{J}^{\mathrm{sh}},\ t \in \mathcal{T}, \label{constr:SOCShuntReactive}\\
        & c_{jt}^2 + s_{jt}^2 \leq \mu_{jt} \omega_{i'_j t} \quad & \forall j \in \mathcal{J}^{\mathrm{ac}},\ t \in \mathcal{T}, \label{constr:SOCCosSin}\\
        & \frac{\omega_{i_j t}}{(\tau^{\mathrm{max}}_{j})^2} \leq \mu_{jt} \leq \frac{\omega_{i_j t}}{(\tau^{\mathrm{min}}_{j})^2} \quad & \forall j \in \mathcal{J}^{\mathrm{ac}},\ t \in \mathcal{T}, \label{constr:SOCTapRatio}\\
        & u^{\mathrm{sh,min}}_j \omega_{i_j t} \leq \mu^{\mathrm{sh}}_{jt} \leq u^{\mathrm{sh,max}}_j \omega_{i_j t} \quad & \forall j \in \mathcal{J}^{\mathrm{sh}},\ t \in \mathcal{T}, \label{constr:SOCShuntDomain}\\
        & \omega_{it} \in [(v^{\mathrm{min}}_i)^2, (v^{\mathrm{max}}_i)^2] \quad & \forall i \in \mathcal{I},\ t \in \mathcal{T}, \label{constr:SOCBusSets}\\
        & \eqref{constr:DCBalance}, \eqref{constr:DCSets}. \label{constr:SOCDC}
    \end{align}
\end{subequations}
We denote the region defined by these constraints as $\mathcal{X}^{\mathrm{soc}}_{t} := \{(p_t,q_t,\mu_t,\omega_t)\,:\,\exists(c_t,s_t) \text{ s.t. } \eqref{constr:SOC}\}$.  
As the constraints \eqref{constr:SOCCosSin} are SOC-representable and the other constraints are linear, the sets $\mathcal{X}^{\mathrm{soc}}_{t}$ are convex and SOC-representable. Proposition~\ref{prop:SOCRelaxation} states that these sets relax constraints \eqref{constr:PowerFlow}.
\begin{proposition}
    \label{prop:SOCRelaxation}
    For all $t \in \mathcal{T}$, the constraints \eqref{constr:SOC} are a convex relaxation of the power flow constraints \eqref{constr:PowerFlow}.  In particular, for all $t \in \mathcal{T}$, the set $\mathcal{X}^{\mathrm{soc}}_t$ satisfies 
    \begin{equation*}
        \begin{aligned}
            &\ \left \{(p_t,q_t,u^{\mathrm{sh}}_t)\,:\, \exists (\mu_t,\omega_t) \ \mathrm{s.t.}\ (p_t,q_t,\mu_t,\omega_t) \in \mathcal{X}^{\mathrm{soc}}_t,\ u^{\mathrm{sh}}_{jt} = \mu^{\mathrm{sh}}_{jt}/{\omega_{i_jt}}\ \forall j \in \mathcal{J}^{\mathrm{sh}} \right \} \\
            \supseteq &\ \{(p_t,q_t,u^{\mathrm{sh}}_t) \in \mathcal{X}^{\mathrm{ac}}_t\,:\, u^{\mathrm{sh}}_{jt} \in \mathcal{Y}^{\mathrm{sh}}_{jt}\ \forall j \in \mathcal{J}^{\mathrm{sh}}\}.
        \end{aligned}
    \end{equation*}
\end{proposition}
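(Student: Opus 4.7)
The plan is to verify the set containment directly by constructing an explicit witness in $\mathcal{X}^{\mathrm{soc}}_t$ for every element of $\mathcal{X}^{\mathrm{ac}}_t$ (augmented with shunt bounds from $\mathcal{Y}^{\mathrm{sh}}_{jt}$), and then separately argue convexity. First, I would take an arbitrary $(p_t,q_t,u^{\mathrm{sh}}_t) \in \mathcal{X}^{\mathrm{ac}}_t$ with $u^{\mathrm{sh}}_{jt} \in \mathcal{Y}^{\mathrm{sh}}_{jt}$ and extract a witness $(v_t,\Delta_t,\theta_t,\tau_t,\phi_t)$ satisfying \eqref{constr:ACAngleDiff}--\eqref{constr:DCSets}. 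Then, for each $j \in \mathcal{J}^{\mathrm{ac}}$, $j \in \mathcal{J}^{\mathrm{sh}}$, and $i \in \mathcal{I}$, I define $c_{jt},s_{jt},\omega_{it},\mu_{jt},\mu^{\mathrm{sh}}_{jt}$ exactly as in the change-of-variables display preceding \eqref{constr:SOC}.

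Second, I would verify each constraint in \eqref{constr:SOC} by algebraic substitution. For instance, \eqref{constr:ACpfr} becomes
\[
p^{\mathrm{fr}}_{jt} = g_{i_jj}\frac{v_{i_jt}^2}{\tau_{jt}^2} - g_j \frac{v_{i_jt}v_{i'_jt}}{\tau_{jt}}\cos(\Delta_{jt}) - b_j \frac{v_{i_jt}v_{i'_jt}}{\tau_{jt}}\sin(\Delta_{jt}) = g_{i_jj}\mu_{jt} - g_j c_{jt} - b_j s_{jt},
\]
recovering \eqref{constr:SOCpfr}; equations \eqref{constr:SOCpto}--\eqref{constr:SOCqto} follow by the same expansion, and the shunt relations \eqref{constr:SOCShuntReal}--\eqref{constr:SOCShuntReactive} fall out immediately from the definition of $\mu^{\mathrm{sh}}_{jt}$. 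The key trigonometric identity
\[
c_{jt}^2 + s_{jt}^2 = \frac{v_{i_jt}^2 v_{i'_jt}^2}{\tau_{jt}^2} = \mu_{jt}\,\omega_{i'_jt}
\]
holds with equality, so the relaxed inequality \eqref{constr:SOCCosSin} is satisfied. Constraints \eqref{constr:SOCTapRatio} follow from $\mu_{jt} = \omega_{i_jt}/\tau_{jt}^2$ combined with \eqref{constr:ACSets}; \eqref{constr:SOCShuntDomain} follows from $\mu^{\mathrm{sh}}_{jt} = u^{\mathrm{sh}}_{jt}\omega_{i_jt}$ and $u^{\mathrm{sh}}_{jt} \in [u^{\mathrm{sh,min}}_j,u^{\mathrm{sh,max}}_j]$ (which holds by $\mathcal{Y}^{\mathrm{sh}}_{jt}$); \eqref{constr:SOCBusSets} follows by squaring \eqref{constr:BusSets} (both endpoints are positive); and the DC constraints \eqref{constr:SOCDC} are identical in both formulations. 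Finally, the recovery identity $u^{\mathrm{sh}}_{jt} = \mu^{\mathrm{sh}}_{jt}/\omega_{i_jt}$ holds by construction, using $v^{\mathrm{min}}_i > 0$ to ensure $\omega_{i_jt} > 0$.

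Third, I would confirm convexity of $\mathcal{X}^{\mathrm{soc}}_t$. All of \eqref{constr:SOCpfr}--\eqref{constr:SOCShuntReactive}, \eqref{constr:SOCTapRatio}--\eqref{constr:SOCBusSets}, and \eqref{constr:SOCDC} are affine in $(p_t,q_t,c_t,s_t,\mu_t,\omega_t,\mu^{\mathrm{sh}}_t)$. The remaining inequality \eqref{constr:SOCCosSin} is a rotated second-order cone constraint, which is convex once $\mu_{jt},\omega_{i'_jt} \geq 0$; these nonnegativities are implied by $\omega_{it} \geq (v^{\mathrm{min}}_i)^2 > 0$ from \eqref{constr:SOCBusSets} and $\mu_{jt} \geq \omega_{i_jt}/(\tau_j^{\mathrm{max}})^2 > 0$ from \eqref{constr:SOCTapRatio}. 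Convexity then transfers to the projection onto $(p_t,q_t,u^{\mathrm{sh}}_t)$.

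There is essentially no deep obstacle: the argument is a careful but mechanical substitution. The only points requiring attention are (i) tracking the change of variables so that each AC equation in \eqref{constr:PowerFlow} matches its SOC counterpart term by term, and (ii) justifying that dropping the angle-recovery equation $\Delta_{jt} = \arctan(s_{jt}/c_{jt})$ and relaxing $c_{jt}^2+s_{jt}^2 = \mu_{jt}\omega_{i'_jt}$ to an inequality preserves feasibility (which is immediate, since the original point satisfies the stricter equality) while producing a rotated SOC representable set.
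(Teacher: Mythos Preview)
Your proposal is correct and mirrors the paper's proof almost exactly: both construct the same change-of-variables witness from an arbitrary point of $\mathcal{X}^{\mathrm{ac}}_t$ and verify each constraint in \eqref{constr:SOC} by direct algebraic substitution, with the paper handling convexity in the surrounding text rather than in the formal proof. One small caveat: the convexity claim in the proposition pertains to $\mathcal{X}^{\mathrm{soc}}_t$ in the lifted $(p,q,\mu,\omega)$ space, not to the left-hand set in the displayed containment (the recovery $u^{\mathrm{sh}}_{jt}=\mu^{\mathrm{sh}}_{jt}/\omega_{i_jt}$ is nonlinear), so your final sentence about convexity ``transferring to the projection onto $(p_t,q_t,u^{\mathrm{sh}}_t)$'' is unnecessary and slightly misstated.
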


When constructing $\mathcal{X}^{\mathrm{soc}}_t$, the variables $u^{\mathrm{sh}}_{jt}$ are projected out.  However, the corresponding value for these variables can be recovered by $u^{\mathrm{sh}}_{jt} = \frac{\mu^{\mathrm{sh}}_{jt}}{\omega_{i_j t}}$.  The quadratic penalty terms for these variables, which appear in the penalty functions $\Gamma_2$, can instead be written as
$$(u^{\mathrm{sh}}_{jt} - \overline{u}^{\mathrm{sh}}_{jt})^2 = (\mu^{\mathrm{sh}}_{jt}/\omega_{i_j t} - \overline{u}^{\mathrm{sh}}_{jt} )^2 = (\mu^{\mathrm{sh}}_{jt} - \overline{u}^{\mathrm{sh}}_{jt} \omega_{i_j t})^2/\omega_{i_j t}^2 \approx (\mu^{\mathrm{sh}}_{jt} - \overline{u}^{\mathrm{sh}}_{jt} \omega_{i_j t})^2,$$
where the approximate equality is exact when voltages are $1$.  The approximating term is convex quadratic in $(\mu,\omega)$.  Any error from $\omega_{i_j t} \neq 1$ simply rescales the penalty coefficients of this term.

\subsection{Contingency Screening}
\label{sec:improve_contingency}
Large numbers of contingencies may significantly increase subproblem solution times, as each contingency $k$ requires the incorporation of its nonlinear (albeit convex) cost function $C^{\mathrm{ctg}}_{kt}$ into the objective (see Section~\ref{sec:modelContingency}).  Even computing the GSFs $F_{kji}$ requires a matrix inversion, which may be costly. Empirically, many contingencies incur minimal cost and thus have little impact on the optimal solution, which motivates contingency screening. We identify a set of contingencies $\overline{\mathcal{K}} \subset \mathcal{K}$ and branches $\overline{\mathcal{J}}_{kt} \subset \mathcal{J}^{\mathrm{ac}}$  for $k \in \overline{\mathcal{K}}$ that incur a large contingency cost at a candidate solution $(\overline{p}, \overline{q})$.  

To avoid evaluating every contingency, we first identify a set of contingencies that are likely to incur penalties.  Specifically, this set of prescreened contingencies is denoted by $\overline{\mathcal{K}}$ and contains the $N^{\mathrm{ctg}}$ contingencies with  largest real power flows on the outaged line $j_k$ in the candidate solution.  We then evaluate the post-contingency line flows for prescreened contingencies and select $\overline{\mathcal{J}}_{kt}$ as the set of the $N^{\mathrm{br}}$ branches with the largest post-contingency apparent power limit violations exceeding some threshold $s^{\mathrm{thr}} \geq 0$.  Post-contingency line flows are evaluated by computing the GSFs. To compute these parameters efficiently, we update the required matrix inverse by a Sherman-Morrison rank-1 update on a base matrix \citep{hager1989updating}, avoiding the inversion of a new matrix for each contingency \citep{holzer2023fast}. See the appendix for additional details.

We approximate the post-contingency real power flow functions by $${\overline{f}_{kjt}(p;\overline{\mathcal{I}}_{kjt})} := \sum_{i \in \overline{\mathcal{I}}_{kjt}} F_{kji} \left ( p^{\mathrm{inj}}_{it}(p) - \frac{p^{\mathrm{sl}}_t(p)}{|\mathcal{I}|}\right ).$$
The subset $\overline{\mathcal{I}}_{kjt} \subseteq \mathcal{I}$ consists of the $N^{\mathrm{bus}}$ buses with the largest absolute contributions to the post-contingency line flow at the incumbent solution, $\left | F_{kji} \left ( p^{\mathrm{inj}}_{it}(\overline{p})  - \frac{p^{\mathrm{sl}}_t(\overline{p})}{|\mathcal{I}|}  \right ) \right |$.  This approximation increases the sparsity in the power injection variables, improving computational performance. Algorithm~\ref{alg:contingency} details the contingency screening approach. To ease notation, we denote by $\argmax^k$ the operator that selects the $k$ largest elements, with ties broken arbitrarily. The algorithm returns the subsets $\overline{\mathcal{N}} := (\overline{\mathcal{I}}, \overline{\mathcal{J}}, \overline{\mathcal{K}})$. Based on these subsets, we construct new cost functions:
\begin{equation*}
    \begin{aligned}
    \overline{C}^{\mathrm{ctg}}_{kt}(p,q;\overline{\mathcal{N}}) & := c^\mathrm{s} d_t \sum_{j \in \overline{\mathcal{J}}_{kt}} \left[ \norm{({\overline{f}_{kjt}(p; \overline{\mathcal{I}}_{kjt})},\max \{|q^\mathrm{fr}_{jt}|,|q^\mathrm{to}_{jt}|\})}_2 - s^\mathrm{max,ctg}_j \right]^+ \quad \forall k \in \overline{\mathcal{K}},\ t \in \mathcal{T},\\
    \overline{C}^{\mathrm{ctg}}_t(p,q;\overline{\mathcal{N}}) & := \frac{\sum_{k \in \overline{\mathcal{K}}_t} C^{\mathrm{ctg}}_{kt}(\overline{p},\overline{q})}{N^{\mathrm{ctg}} \sum_{k \in \overline{\mathcal{K}}_t} \overline{C}^{\mathrm{ctg}}_{kt}(\overline{p},\overline{q};\overline{\mathcal{N}})} \left ( \sum_{k \in \overline{\mathcal{K}}_t} \overline{C}^{\mathrm{ctg}}_{kt}(p,q;\overline{\mathcal{N}}) \right ) + \max_{k \in \overline{\mathcal{K}}}\  \overline{C}^{\mathrm{ctg}}_{kt}(p,q;\overline{\mathcal{N}})  \quad  \forall t \in \mathcal{T}.
    \end{aligned}
\end{equation*}
These functions are based on fewer contingencies and branches and thus are easier to construct and optimize. The coefficient in the second function averages over the $N^{\mathrm{ctg}}$ selected contingencies and scales the cost by the proportion of the apparent power limit violations identified by the modified functions $\overline{f}_{kjt}$ to reduce bias in the average cost after screening.

\begin{algorithm}[t]
    \caption{Contingency Screening}\label{alg:contingency}
    \SetKwInOut{Input}{Input}
    \Input{Candidate power injection solution $(\overline{p},\overline{q})$, thresholds $N^{\mathrm{ctg}}$, $N^{\mathrm{br}}$, $N^{\mathrm{bus}}$, $s^{\mathrm{thr}}$}
    Compute the base-case GSF matrix\;
    Define $\overline{\mathcal{K}} := \underset{k \in \mathcal{K}}{\argmax}^{N^{\mathrm{ctg}}}\  \underset{t \in \mathcal{T}}{\max}\ \max \{|\overline{p}^{\mathrm{fr}}_{j_k t}|,|\overline{p}^{\mathrm{to}}_{j_k t}|\}$\;
    \For{$k \in \overline{\mathcal{K}},\ t \in \mathcal{T}$}{
        Compute post-contingency GSF matrix $F_k$ via a rank-1 update and construct functions $f_{kjt}$ for all $j \in \mathcal{J}^{\mathrm{ac}}$\;
        Define $\mathcal{J}^{\mathrm{viol}}_{kt} := \{j \in \mathcal{J}^{\mathrm{ac}} \setminus \{j_k\} \,:\, \norm{(f_{kjt}(\overline{p}),\max \{|\overline{q}^{\mathrm{fr}}_{jt}|,|\overline{q}^{\mathrm{to}}_{jt}|\})}_2 - s^{\mathrm{max,ctg}}_j \geq s^{\mathrm{thr}}\}$\;
        Define $\overline{\mathcal{J}}_{kt} := \underset{j \in \mathcal{J}^{\mathrm{viol}}_{kt}}{\argmax}^{N^{\mathrm{br}}} \ \norm{(f_{kjt}(\overline{p}),\max \{|\overline{q}^{\mathrm{fr}}_{jt}|,|\overline{q}^{\mathrm{to}}_{jt}|\})}_2 - s^{\mathrm{max,ctg}}_j$\;
            Define $\overline{\mathcal{I}}_{kjt} := \underset{i \in \mathcal{I}}{\argmax}^{N^{\mathrm{bus}}} \left | F_{kji}  \left ( p^{\mathrm{inj}}_{it}(\overline{p}) - \frac{p^{\mathrm{sl}}_t(\overline{p})}{|\mathcal{I}|}  \right ) \right |$\ for all $j \in \overline{\mathcal{J}}_{kt}$;
        }
    \Return{$\overline{\mathcal{N}} := (\overline{\mathcal{I}},\overline{\mathcal{J}},\overline{\mathcal{K}})$}
\end{algorithm}

In practice, the candidate solution $(\overline{p},\overline{q})$ comes from the solution to the final subproblems~$Z^{\mathrm{F}}_t$.  After contingency screening around this solution, we solve a version of the final subproblems that includes the approximated contingency objectives:
\begin{equation*}
    \label{FSPwithCtg}
	\begin{aligned}
    Z^{\mathrm{F}}_t(\overline{p},\overline{q},\overline{u},\overline{\mathcal{N}};\delta) \ :=\  \max_{p_t,q_t,u_t} \quad & R^{\mathrm{F}}_t(p_t,q_t;\overline{p}) - \overline{C}^{\mathrm{ctg}}_t(p_t,q_t;\overline{\mathcal{N}})\\
    \text{s.t.} \quad & \eqref{constr:PIReal} - \eqref{constr:PIReactive},\, \eqref{eq:rampSafeConstraint},\ (p_t,q_t,u^{\mathrm{sh}}_t) \in \mathcal{X}^{\mathrm{ac}}_t,\ u = \overline{u}.
    \end{aligned}
\end{equation*}
For a fixed unit commitment solution, this allows the power injection decisions to adapt to impactful contingencies under the same restrictions as in the original final subproblems.

\subsection{Objective Function Approximation and Device Subproblem Aggregation}
\label{sec:improve_objective}

The objective function of \eqref{SC-ACOPF} is partitioned into the subproblems used in Algorithm~\ref{alg:basicAPM}, such that every objective term is contained in exactly one subproblem.  This precludes each subproblem from considering terms assigned to other problems, even if these terms are relevant to its decision variables. Here, we modify the subproblem objectives by reintroducing relevant terms. 

In the device subproblems $Z^{\mathrm{J}}_j$, we introduce an approximation of the power balance violation penalty functions $C^{\mathrm{bal}}_{it}$ (see Section~\ref{sec:modelBalance}).  To facilitate approximation, we aggregate devices colocated at the same bus $i$.  In practice, this strategy is effective because the device-level subproblems are MILPs of relatively small size, so the benefit of decomposition at the device level, compared to the bus level, is minimal.  This aggregation allows for the total net power injection at a bus to be modeled in the approximation. As the spatially-decomposed subproblems $Z^{\mathrm{J}}_j$ do not consider any power flows, we fix the line and shunt power flow variables $(p^{\mathrm{fr}},p^{\mathrm{to}},p^{\mathrm{sh}},q^{\mathrm{fr}},q^{\mathrm{to}},q^{\mathrm{sh}})$ to a solution from the temporally-decomposed subproblems $Z^{\mathrm{T}}_t$.  We then define the approximated power balance penalty functions at a fixed power flow solution $(\tilde{p},\tilde{q})$ by
\begin{equation*}
    \begin{aligned}
        && \overline{C}^{\mathrm{bal}}_{it}(p,q;\tilde{p},\tilde{q}) := d_t \left ( c^{\mathrm{p}}  \left | \sum_{\substack{j \in \mathcal{J}^{\mathrm{pr}}\\i_j = i}} p^{\mathrm{tot}}_{jt} - \sum_{\substack{j \in \mathcal{J}^{\mathrm{cs}}\\i_j = i}} p^{\mathrm{tot}}_{jt} - \sum_{\substack{j \in \mathcal{J}^{\mathrm{sh}}\\i_j = i}} \tilde{p}^{\mathrm{sh}}_{jt} - \sum_{\substack{j \in \mathcal{J}^{\mathrm{br}}\\i_j = i}} \tilde{p}^{\mathrm{fr}}_{jt} - \sum_{\substack{j \in \mathcal{J}^{\mathrm{br}}\\i'_j = i}} \tilde{p}^{\mathrm{to}}_{jt} \right | \right. &\\
        && \left. + c^{\mathrm{q}}  \left | \sum_{\substack{j \in \mathcal{J}^{\mathrm{pr}}\\i_j = i}} q^{\mathrm{tot}}_{jt} - \sum_{\substack{j \in \mathcal{J}^{\mathrm{cs}}\\i_j = i}} q^{\mathrm{tot}}_{jt} - \sum_{\substack{j \in \mathcal{J}^{\mathrm{sh}}\\i_j = i}} \tilde{q}^{\mathrm{sh}}_{jt} - \sum_{\substack{j \in \mathcal{J}^{\mathrm{br}}\\i_j = i}} \tilde{q}^{\mathrm{fr}}_{jt} - \sum_{\substack{j \in \mathcal{J}^{\mathrm{br}}\\i'_j = i}} \tilde{q}^{\mathrm{to}}_{jt} \right | \right ) & \quad \forall i \in \mathcal{I},\ t \in \mathcal{T}.
    \end{aligned}
\end{equation*}

Next, the temporal problems $Z^{\mathrm{T}}_t$ do not consider the cost or utility of a power injection $p^{\mathrm{tot}}_{jt}$.  We add the corresponding functions $R^{\mathrm{pow}}_{jt}$ and $C^{\mathrm{pow}}_{jt}$ to the objective.  
Similarly, the device-level problems $Z^{\mathrm{J}}_j$ do not have costs for reserve products; we add the corresponding linear cost terms $c^{\mathrm{res,p}}_{jtr} p^{\mathrm{res}}_{jtr}$ and $c^{\mathrm{res,q}}_{jtr} q^{\mathrm{res}}_{jtr}$ to the objective.

The startup and shutdown variables $u^{\mathrm{su}}_{jt}$ and $u^{\mathrm{sd}}_{jt}$ are implicitly included in the sets $\mathcal{Y}^{\mathrm{uc}}_{jt}$ to model feasible real and reactive power injections during startup and shutdown (see Section~\ref{sec:reformulate}).  However, since no objective cost is associated with these variables in the models $Z^{\mathrm{T}}_t$, they offer a path to generate power without incurring unit commitment cost.  For instance, by constraint \eqref{constr:RelaxPISUDef}, real power at time~$t$ may be produced by starting up in a future time period, which will incur an online and startup cost. These costs are not considered in the objectives $R^{\mathrm{T}}_t$.  In the penalty framework, producing power in such a way encourages device startups at a later time in the device-level models $Z^{\mathrm{J}}_j$; this solution may be suboptimal, especially for devices with high commitment costs.  

To remedy this issue, we reimpose a cost on startups and shutdowns by introducing aggregate variables $\chi^{\mathrm{su}}_{jt} = \sum_{t' \in \mathcal{T}^{\mathrm{supc}}_{jt}} u^{\mathrm{su}}_{jt'}$ and $\chi^{\mathrm{sd}}_{jt} = \sum_{t' \in \mathcal{T}^{\mathrm{sdpc}}_{jt}} u^{\mathrm{sd}}_{jt'}$ and rewriting relevant constraints in $\mathcal{Y}^{\mathrm{uc}}_{jt}$:
\begin{subequations}
    \label{constr:AltPI}
    \begin{align}
        & 0 \leq p^{\mathrm{su}}_{jt} \leq \chi^{\mathrm{su}}_{jt} \max_{t' \in \mathcal{T}^{\mathrm{supc}}_{jt}} p^{\mathrm{supc}}_{jtt'} \quad & \forall j \in \mathcal{J}^{\mathrm{sd}},\ t \in \mathcal{T} ,\label{constr:AltPISUDef}\\
        & 0 \leq p^{\mathrm{sd}}_{jt} \leq \chi^{\mathrm{sd}}_{jt} \max_{t' \in \mathcal{T}^{\mathrm{sdpc}}_{jt}} p^{\mathrm{sdpc}}_{jtt'} \quad & \forall j \in \mathcal{J}^{\mathrm{sd}},\ t \in \mathcal{T} ,\label{constr:AltPISDDef}\\
        & q^{\mathrm{tot}}_{jt} \leq q^{\mathrm{max}}_{jt} \left ( u^{\mathrm{on}}_{jt} + \chi^{\mathrm{su}}_{jt} + \chi^{\mathrm{sd}}_{jt} \right ) + \beta^{\mathrm{max}}_j p^{\mathrm{tot}}_{jt} - \begin{cases}
            q^{\mathrm{res}}_{jt\uparrow} & \text{if } j \in \mathcal{J}^{\mathrm{pr}}\\
            q^{\mathrm{res}}_{jt\downarrow} & \text{if } j \in \mathcal{J}^{\mathrm{cs}}
        \end{cases}  & \forall j \in \mathcal{J}^{\mathrm{sd}},\ t \in \mathcal{T}, \label{constr:AltPIReactReserveMax}\\
        & q^{\mathrm{tot}}_{jt} \geq q^{\mathrm{min}}_{jt} \left ( u^{\mathrm{on}}_{jt} + \chi^{\mathrm{su}}_{jt} + \chi^{\mathrm{sd}}_{jt} \right ) + \beta^{\mathrm{min}}_j p^{\mathrm{tot}}_{jt} + \begin{cases}
            q^{\mathrm{res}}_{jt\downarrow} & \text{if } j \in \mathcal{J}^{\mathrm{pr}}\\
            q^{\mathrm{res}}_{jt\uparrow} & \text{if } j \in \mathcal{J}^{\mathrm{cs}}
        \end{cases}  & \forall j \in \mathcal{J}^{\mathrm{sd}},\ t \in \mathcal{T}, \label{constr:AltPIReactReserveMin}\\
        & u^{\mathrm{on}}_{jt} + \chi^{\mathrm{su}}_{jt} \leq 1 & \forall j \in \mathcal{J}^{\mathrm{sd}},\ t \in \mathcal{T}, \label{constr:AltPISUMax}\\
        & u^{\mathrm{on}}_{jt} + \chi^{\mathrm{sd}}_{jt} \leq 1 & \forall j \in \mathcal{J}^{\mathrm{sd}},\ t \in \mathcal{T}, \label{constr:AltPISDMax}\\
        & (u^{\mathrm{on}}_{jt}, \chi^{\mathrm{su}}_{jt}, \chi^{\mathrm{sd}}_{jt}) \in [0,1]^3 & \forall j \in \mathcal{J}^{\mathrm{sd}},\ t \in \mathcal{T}. \label{constr:AltPIDomain}
    \end{align}
\end{subequations}
We denote the feasible region for this formulation by the set $\overline{\mathcal{Y}}^{\mathrm{uc}}_{jt} := \{(p_{jt},q_{jt},u^{\mathrm{on}}_{jt},\chi_{jt})\,:\, \eqref{constr:PIRealDef},\,\eqref{constr:PIDeviceReserveSet}-\eqref{constr:PINonneg},\,\eqref{constr:PIReactNonneg},\,\eqref{constr:AltPI}\}$.  Proposition~\ref{prop:AltPIEquivalent} gives that these regions are equivalent to the sets $\mathcal{Y}^{\mathrm{uc}}_{jt}$. 
\begin{proposition}
    \label{prop:AltPIEquivalent}
    For all $j \in \mathcal{J}^{\mathrm{sd}}$ and $t \in \mathcal{T}$, the sets $\mathcal{Y}^{\mathrm{uc}}_{jt}$ and $\overline{\mathcal{Y}}^{\mathrm{uc}}_{jt}$ are equivalent; that is, 
    $$\mathcal{Y}^{\mathrm{uc}}_{jt} = \{(p_{jt},q_{jt},u^{\mathrm{on}}_{jt})\,:\, \exists \chi_{jt} \ \mathrm{s.t.}\  (p_{jt},q_{jt},u^{\mathrm{on}}_{jt},\chi_{jt}) \in \overline{\mathcal{Y}}^{\mathrm{uc}}_{jt}\}.$$
\end{proposition}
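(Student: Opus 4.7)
The plan is to prove the two set inclusions separately, both constructively. The equivalence turns entirely on how one translates between the vector of startup/shutdown indicators $(u^{\mathrm{su}}_j, u^{\mathrm{sd}}_j)$ indexed over the offset windows $\mathcal{T}^{\mathrm{supc}}_{jt}$, $\mathcal{T}^{\mathrm{sdpc}}_{jt}$ and the scalar aggregates $(\chi^{\mathrm{su}}_{jt}, \chi^{\mathrm{sd}}_{jt})$. Note that all constraints defining both $\mathcal{Y}^{\mathrm{uc}}_{jt}$ and $\overline{\mathcal{Y}}^{\mathrm{uc}}_{jt}$ are local to period $t$ (the set definitions only invoke relations with time index $t$), so the argument is essentially per-$(j,t)$.

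For the inclusion $\mathcal{Y}^{\mathrm{uc}}_{jt} \subseteq \{\text{projection of } \overline{\mathcal{Y}}^{\mathrm{uc}}_{jt}\}$, I would fix any $(p_{jt},q_{jt},u^{\mathrm{on}}_{jt}) \in \mathcal{Y}^{\mathrm{uc}}_{jt}$ together with the witnessing $(u^{\mathrm{su}}_j, u^{\mathrm{sd}}_j) \in [0,1]^{2T}$, and then simply set $\chi^{\mathrm{su}}_{jt} := \sum_{t' \in \mathcal{T}^{\mathrm{supc}}_{jt}} u^{\mathrm{su}}_{jt'}$ and $\chi^{\mathrm{sd}}_{jt} := \sum_{t' \in \mathcal{T}^{\mathrm{sdpc}}_{jt}} u^{\mathrm{sd}}_{jt'}$. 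Verifying the constraints of $\overline{\mathcal{Y}}^{\mathrm{uc}}_{jt}$ is then routine: \eqref{constr:AltPIReactReserveMax}--\eqref{constr:AltPIReactReserveMin} hold by direct substitution into \eqref{constr:PIReactReserveMax}--\eqref{constr:PIReactReserveMin}; \eqref{constr:AltPISUMax}--\eqref{constr:AltPISDMax} are exactly \eqref{eq:SUSDRelaxation}; the domain condition \eqref{constr:AltPIDomain} follows because each $u^{\mathrm{su}}_{jt'} \in [0,1]$ gives $\chi^{\mathrm{su}}_{jt} \geq 0$ while \eqref{eq:SUSDRelaxation} gives $\chi^{\mathrm{su}}_{jt} \leq 1$, and analogously for $\chi^{\mathrm{sd}}$ and $u^{\mathrm{on}}_{jt}$. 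The one step that needs a short calculation is \eqref{constr:AltPISUDef}: from \eqref{constr:RelaxPISUDef},
\begin{equation*}
p^{\mathrm{su}}_{jt} \leq \sum_{t' \in \mathcal{T}^{\mathrm{supc}}_{jt}} p^{\mathrm{supc}}_{jtt'} u^{\mathrm{su}}_{jt'} \leq \Bigl(\max_{t' \in \mathcal{T}^{\mathrm{supc}}_{jt}} p^{\mathrm{supc}}_{jtt'}\Bigr) \sum_{t' \in \mathcal{T}^{\mathrm{supc}}_{jt}} u^{\mathrm{su}}_{jt'} = \chi^{\mathrm{su}}_{jt} \max_{t' \in \mathcal{T}^{\mathrm{supc}}_{jt}} p^{\mathrm{supc}}_{jtt'},
\end{equation*}
and symmetrically for \eqref{constr:AltPISDDef}.

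For the reverse inclusion, given $(p_{jt},q_{jt},u^{\mathrm{on}}_{jt},\chi_{jt}) \in \overline{\mathcal{Y}}^{\mathrm{uc}}_{jt}$, I would disaggregate by concentrating all mass at the index that achieves the largest coefficient. Precisely, pick $t^{\star}_{\mathrm{su}} \in \argmax_{t' \in \mathcal{T}^{\mathrm{supc}}_{jt}} p^{\mathrm{supc}}_{jtt'}$, set $u^{\mathrm{su}}_{j t^{\star}_{\mathrm{su}}} := \chi^{\mathrm{su}}_{jt}$, and set all other $u^{\mathrm{su}}_{jt'}$ to $0$; apply the analogous construction for $u^{\mathrm{sd}}$. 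Then all bounds $(u^{\mathrm{su}}_j,u^{\mathrm{sd}}_j) \in [0,1]^{2T}$ hold because $\chi^{\mathrm{su}}_{jt}, \chi^{\mathrm{sd}}_{jt} \in [0,1]$ by \eqref{constr:AltPIDomain}; constraint \eqref{eq:SUSDRelaxation} follows from \eqref{constr:AltPISUMax}--\eqref{constr:AltPISDMax} since $\sum_{t'} u^{\mathrm{su}}_{jt'} = \chi^{\mathrm{su}}_{jt}$; and the upper bound in \eqref{constr:RelaxPISUDef} becomes $p^{\mathrm{su}}_{jt} \leq \chi^{\mathrm{su}}_{jt} \cdot p^{\mathrm{supc}}_{jt t^{\star}_{\mathrm{su}}} = \sum_{t'} p^{\mathrm{supc}}_{jtt'} u^{\mathrm{su}}_{jt'}$, which is exactly \eqref{constr:AltPISUDef} with the argmax choice. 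Constraints \eqref{constr:PIRealDef}, \eqref{constr:PIDeviceReserveSet}, \eqref{constr:PINonneg}, \eqref{constr:PIReactive}, and $u^{\mathrm{on}}_{jt} \geq 0$ are inherited directly from $\overline{\mathcal{Y}}^{\mathrm{uc}}_{jt}$.

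The only subtlety, and the one step I would be most careful about, is the edge case where $\mathcal{T}^{\mathrm{supc}}_{jt}$ (or $\mathcal{T}^{\mathrm{sdpc}}_{jt}$) is empty; the $\argmax$ is then undefined and the convention $\max_\emptyset = -\infty$ forces $p^{\mathrm{su}}_{jt} = 0$ from \eqref{constr:AltPISUDef} combined with \eqref{constr:PINonneg}, which matches the corresponding empty-sum implication $p^{\mathrm{su}}_{jt} = 0$ from \eqref{constr:RelaxPISUDef}. In that case, \eqref{constr:AltPISUDef} additionally forces $\chi^{\mathrm{su}}_{jt} = 0$ (with $\max_\emptyset$ interpreted as $0$, which is the only consistent definition here) so that no disaggregation is required, and the forward direction produces $\chi^{\mathrm{su}}_{jt} = 0$ automatically. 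Apart from this bookkeeping, the proof is a pair of simple verifications, with the maximum-coefficient disaggregation being the only nontrivial idea.
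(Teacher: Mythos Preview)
Your proof is correct and follows essentially the same approach as the paper: the forward inclusion via the natural aggregation $\chi^{\mathrm{su}}_{jt} = \sum_{t'} u^{\mathrm{su}}_{jt'}$ (using the max-coefficient bound to pass from \eqref{constr:RelaxPISUDef} to \eqref{constr:AltPISUDef}), and the reverse inclusion via concentrating all mass at an $\argmax$ index. Your explicit handling of the empty-$\mathcal{T}^{\mathrm{supc}}_{jt}$ edge case is a small refinement the paper omits; the only minor imprecision is that in the reverse direction \eqref{constr:PIReactReserveMax}--\eqref{constr:PIReactReserveMin} are not ``inherited directly'' but follow from \eqref{constr:AltPIReactReserveMax}--\eqref{constr:AltPIReactReserveMin} via the identity $\sum_{t'} u^{\mathrm{su}}_{jt'} = \chi^{\mathrm{su}}_{jt}$, which is immediate from your construction.
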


As nonzero $\chi^{\mathrm{su}}_{jt}$ (resp.\ $\chi^{\mathrm{sd}}_{jt}$) implies a future startup (resp.\ past shutdown) and therefore implies that the device must be on in some other time period, we associate online and startup (resp.\ shutdown) costs with these variables.  The revised unit commitment cost functions are  
$$\overline{C}^{\mathrm{uc}}_{t}(u,\chi) := \sum_{j \in \mathcal{J}^{\mathrm{sd}}} c^{\mathrm{on}}_{jt}  u^{\mathrm{on}}_{jt} + \left ( \max_{t' \in \mathcal{T}^{\mathrm{supc}}_{jt}} c^{\mathrm{on}}_{jt'}+ c^{\mathrm{su}}_j \right )  \chi^{\mathrm{su}}_{jt} + \left ( \max_{t' \in \mathcal{T}^{\mathrm{sdpc}}_{jt}} c^{\mathrm{on}}_{j,t'-1} + c^{\mathrm{sd}}_{j} \right ) \chi^{\mathrm{sd}}_{jt} \quad \forall t \in \mathcal{T}.$$
These functions overestimate the cost coefficients of $\chi^{\mathrm{su}}_{jt}$ and $\chi^{\mathrm{sd}}_{jt}$ in two ways: first, downtime-dependent cost adjustments $c^{\mathrm{dd}}_{jtt'}$ are ignored; and second, the largest plausible commitment cost $c^{\mathrm{on}}_{jt}$ for future (resp.\ past) online status is used.

In summary, we define updated subproblem objective functions that implement these changes:
\begin{equation*}
    \begin{aligned}
        \overline{R}^{\mathrm{T}}_t(p,q,u,\chi) & := \sum_{j \in \mathcal{J}^{\mathrm{cs}}} R^{\mathrm{pow}}_{jt}(p) - \sum_{j \in \mathcal{J}^{\mathrm{pr}}}C^{\mathrm{pow}}_{jt}(p) - \sum_{j \in \mathcal{J}^{\mathrm{ac}}} C^{\mathrm{ac}}_{jt}(p,q) - \sum_{i \in \mathcal{I}} C^{\mathrm{bal}}_{it}(p,q) - \overline{C}^{\mathrm{uc}}_{t}(u,\chi) - C^{\mathrm{res}}_t(p,q),\\
        \overline{R}^{\mathrm{I}}_i(p,q,u;\tilde{p},\tilde{q}) & := \sum_{\substack{j \in \mathcal{J}^{\mathrm{sd}}\\i_j = i}}  \left ( R^{\mathrm{J}}_j(p,u) - \sum_{\substack{t \in \mathcal{T}\\r \in \mathcal{R}}} d_t c^{\mathrm{res,p}}_{jtr} p^{\mathrm{res}}_{jtr} - \sum_{\substack{t \in \mathcal{T}\\r \in \{\uparrow,\downarrow\}}} d_t c^{\mathrm{res,q}}_{jtr} q^{\mathrm{res}}_{jtr} \right ) - \sum_{t \in \mathcal{T}} \overline{C}^{\mathrm{bal}}_{it}(p,q;\tilde{p},\tilde{q}).
    \end{aligned}
\end{equation*}
Contingency screening adds contingency costs to the final subproblem objectives, so these costs are removed from the intermediate subproblem objectives $\overline{R}^{\mathrm{T}}_t$.  Additionally, reserve requirement and branch limit penalties are withheld from $\overline{R}^{\mathrm{I}}_i$ and energy penalties are withheld from $\overline{R}^{\mathrm{T}}_t$.  Computational experiments demonstrate that these omissions do not degrade solution quality.

\subsection{The Tailored pADM}

Now, we combine the adaptations discussed in the previous sections and present Algorithm~\ref{alg:tailoredAPM}. The algorithm uses new versions of the subproblems $Z^{\mathrm{T}}_t$ and $Z^{\mathrm{J}}_j$  defined as follows, where $u_i$ denotes the subset of variables $u$ that contain index $j$ with $i_j = i$:
\begin{subequations}
    \begin{align*}
        \overline{Z}^{\mathrm{T}}_t(\overline{p}_t,\overline{q}_t,\overline{u}_t;\rho) \ :=\ & 
        \begin{aligned}[t]
        \max_{\substack{p_t,q_t,u_t,\\\chi_t,\mu_t,\omega_t}} \quad & \overline{R}^{\mathrm{T}}_t(p_t,q_t,u_t,\chi_t) - \rho \Gamma'_2 (p_t,q_t,u_t,\overline{p}_t,\overline{q}_t,\overline{u}_t) - \rho \sum_{j \in \mathcal{J}^{\mathrm{sh}}} (\mu^{\mathrm{sh}}_{jt} - \overline{u}^{\mathrm{sh}}_{jt} \omega_{i_j t})^2\\
        \text{s.t.} \quad & \begin{aligned}[t]
            & (p_t,q_t,\mu_t,\omega_t) \in \mathcal{X}^{\mathrm{soc}}_t,\\
            &(p_{jt},q_{jt},u^{\mathrm{on}}_{jt},\chi_{jt}) \in \overline{\mathcal{Y}}^{\mathrm{uc}}_{jt} \quad & \forall j \in \mathcal{J}^{\mathrm{sd}};
        \end{aligned}
        \end{aligned}\\
        \overline{Z}^{\mathrm{I}}_i(p_i,q_i,u_i;\rho) \ :=\  & 
        \begin{aligned}[t]
        \max_{\overline{p}_i,\overline{q}_i,\overline{u}_i} \quad &  \overline{R}^{\mathrm{I}}_i(\overline{p}_i,\overline{q}_i,\overline{u}_i;p_i,q_i) - \rho \Gamma'_1(p_i,q_i,u_i,\overline{p}_i,\overline{q}_i,\overline{u}_i)\\
        \text{s.t.} \quad & (\overline{p}_j, \overline{q}_j, \overline{u}_j) \in \mathcal{X}^{\mathrm{uc}}_j \quad \forall j \in \mathcal{J}^{\mathrm{sd}} \text{ with } i_j = i.
        \end{aligned}
    \end{align*}
\end{subequations}

\begin{algorithm}[t]
    \caption{Tailored Penalty Alternating Direction Method}\label{alg:tailoredAPM}
    \SetKwInOut{Input}{Input}
    \Input{Penalty schedule $\{\rho_\tau\}_{\tau = 1}^{\overline{\tau}} \subset \mathbb{R}_{>}$, contingency thresholds $N^{\mathrm{ctg}},N^{\mathrm{br}},N^{\mathrm{bus}},s^{\mathrm{thr}}$}
    \For{$\tau \in \{1,\ldots,\overline{\tau}\}$}{
        \uIf{$\tau = 1$}{
            For $t \in \mathcal{T}$, solve $\overline{Z}^{\mathrm{T}}_t(0,0,0;0)$ to solution $(p^{(\tau)}_t,q^{(\tau)}_t,u^{(\tau)}_t,\chi^{(\tau)}_t,\mu^{(\tau)}_t,\omega^{(\tau)}_t)$\;
        }
        \Else{
            For $t \in \mathcal{T}$, solve $\overline{Z}^{\mathrm{T}}_t(\overline{p}^{(\tau-1)}_t,\overline{q}^{(\tau-1)}_t,\overline{u}^{(\tau-1)}_t;\rho_{\tau})$ to solution $(p^{(\tau)}_t,q^{(\tau)}_t,u^{(\tau)}_t,\chi^{(\tau)}_t,\mu^{(\tau)}_t,\omega^{(\tau)}_t)$\;
        }
        For $i \in \mathcal{I}$, solve $\overline{Z}^{\mathrm{I}}_i(p^{(\tau)}_i,q^{(\tau)}_i,u^{(\tau)}_i;\rho_{\tau})$ to solution $(\overline{p}^{(\tau)}_i,\overline{q}^{(\tau)}_i,\overline{u}^{(\tau)}_i)$\;
        For $j \in \mathcal{J}^{\mathrm{sh}}$ and $t \in \mathcal{T}$, compute ${\overline{u}^{\mathrm{sh} (\tau)}_{jt}} = Z^{\mathrm{SH}}_{jt}( \mu^{\mathrm{sh} (\tau)}_{jt}/\omega_{i_j t}^{(\tau)})$\;
    }
    For $j \in \mathcal{J}^{\mathrm{sd}}$, solve $H_j(\overline{p}^{(\overline{\tau})}_j,\overline{u}^{(\overline{\tau})}_j)$ to solution $(\delta^-_j, \delta^+_j)$\;
    For $t \in \mathcal{T}$, solve $Z^{\mathrm{F}}_t(\overline{p}^{(\overline{\tau})},\overline{q}^{(\overline{\tau})},\overline{u}^{(\overline{\tau})},\emptyset;\delta)$ to solution $(p^{\mathrm{F}}_t,q^{\mathrm{F}}_t,u^{\mathrm{F}}_t)$\;
    Compute $\overline{\mathcal{N}} = \verb|Contingency Screening|(p^{\mathrm{F}},q^{\mathrm{F}},N^{\mathrm{ctg}},N^{\mathrm{br}},N^{\mathrm{bus}},s^{\mathrm{thr}})$, using Algorithm~\ref{alg:contingency}\;
    For $t \in \mathcal{T}$, solve $Z^{\mathrm{F}}_t(\overline{p}^{(\overline{\tau})},\overline{q}^{(\overline{\tau})},\overline{u}^{(\overline{\tau})},\overline{\mathcal{N}};\delta)$ to solution $(p_t,q_t,u_t)$\;
    \Return{$(p,q,u)$}
\end{algorithm}

First, the algorithm alternates between solving the SOC relaxations of the temporally decomposed subproblems and the spatially decomposed bus-level subproblems.  Then, the power injection and unit commitment decisions from the final-iteration bus-level models $\overline{Z}^{\mathrm{I}}_i$ are used to compute ramping intervals for constraints \eqref{eq:rampSafeConstraint}.  These bounds and the last iterate generate the final ACOPF subproblems, which are solved without contingencies.  In the last step, the contingencies are screened based on the final subproblem solution. The final subproblems are solved again, incorporating costs from the selected contingencies, to generate a candidate solution to \eqref{SC-ACOPF}. We show the validity of this algorithm in Theorem~\ref{thm:tailoredAlgorithmFeasible}.

\begin{theorem}
    \label{thm:tailoredAlgorithmFeasible}
    Let $(p,q,u)$ be a solution returned by Algorithm~\ref{alg:tailoredAPM}.  Then, there is some $(v,\Delta,\theta,\tau,\phi)$ such that $(p,q,u,v,\Delta,\theta,\tau,\phi)$ is feasible for \eqref{SC-ACOPF}.
\end{theorem}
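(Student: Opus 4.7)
The plan is to verify that the returned tuple $(p,q,u)$, augmented by suitable auxiliary variables $(v,\Delta,\theta,\tau,\phi)$, satisfies every constraint group \eqref{constr:UC}--\eqref{constr:PowerFlow} of \eqref{SC-ACOPF}. A useful preliminary observation is that the two calls to $Z^{\mathrm{F}}_t$ in Algorithm~\ref{alg:tailoredAPM}, with argument $\emptyset$ and with argument $\overline{\mathcal{N}}$, share the same feasible region (screening only adds the approximate contingency cost to the objective). So if I can show the contingency-free call is feasible and that its feasible points satisfy all \eqref{SC-ACOPF} constraints, then the same conclusion transfers automatically to the returned solution.

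First, I would establish the inputs needed to invoke the earlier propositions. The bus-level subproblem $\overline{Z}^{\mathrm{I}}_i$ explicitly imposes $(\overline{p}_j,\overline{q}_j,\overline{u}_j) \in \mathcal{X}^{\mathrm{uc}}_j$ for each $j$ with $i_j=i$, so the final iterate satisfies $(\overline{p}^{(\overline{\tau})}_j,\overline{q}^{(\overline{\tau})}_j,\overline{u}^{(\overline{\tau})}_j)\in\mathcal{X}^{\mathrm{uc}}_j$ for every $j\in\mathcal{J}^{\mathrm{sd}}$. The rounding step $Z^{\mathrm{SH}}_{jt}$ produces an integer in $[u^{\mathrm{sh,min}}_j,u^{\mathrm{sh,max}}_j]$, so $\overline{u}^{\mathrm{sh}(\overline{\tau})}_{jt}\in\mathcal{X}^{\mathrm{sh}}_{jt}$. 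Proposition~\ref{prop:rampLPFeas} then guarantees that each $H_j(\overline{p}^{(\overline{\tau})}_j,\overline{u}^{(\overline{\tau})}_j)$ is feasible and bounded with nonnegative $(\delta^-_j,\delta^+_j)$. Propositions~\ref{prop:EQequivalence} and~\ref{prop:DLFeasible} together supply a $(p,q,u)$ such that the combined tuple is feasible for \eqref{EQ}; taking this $p=\overline{p}^{(\overline{\tau})}$ in the restriction \eqref{eq:rampSafeConstraint} is admissible because $\delta^-_j,\delta^+_j\ge 0$. Consequently the contingency-free $Z^{\mathrm{F}}_t$, and hence the screened version, is feasible.

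Next, I would check each constraint family of \eqref{SC-ACOPF} in turn on the returned $(p,q,u)$. The unit commitment constraints \eqref{constr:UC} and the shunt integrality \eqref{constr:ShuntSets} reduce to the fixed $u=\overline{u}^{(\overline{\tau})}$, which lies in $\mathcal{X}^{\mathrm{uc}}_j$ respectively $\mathcal{X}^{\mathrm{sh}}_{jt}$ by the previous paragraph. The device power-injection and reactive-power constraints \eqref{constr:PIReal}--\eqref{constr:PIReactive} are explicitly among the constraints of $Z^{\mathrm{F}}_t$. The ramping constraints \eqref{constr:Ramp} follow from Proposition~\ref{prop:rampLPValid}: since $(p,\overline{u}^{(\overline{\tau})})$ satisfies the interval restriction \eqref{eq:rampSafeConstraint} with $\delta$ obeying \eqref{constr:rampSafeDown}--\eqref{constr:rampSafeUp}, the pair lies in the original ramping set. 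The power-flow constraints \eqref{constr:PowerFlow} are enforced via $(p_t,q_t,u^{\mathrm{sh}}_t)\in\mathcal{X}^{\mathrm{ac}}_t$ inside $Z^{\mathrm{F}}_t$, and by definition of $\mathcal{X}^{\mathrm{ac}}_t$ there exist $(v_t,\Delta_t,\theta_t,\tau_t,\phi_t)$ making \eqref{constr:ACAngleDiff}--\eqref{constr:DCSets} hold. Assembling these auxiliary variables across $t$ completes the witness demanded by the theorem.

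The main obstacle is purely bookkeeping: the algorithm performs several transformations (SOC relaxation of $Z^{\mathrm{T}}_t$, bus-level aggregation in $\overline{Z}^{\mathrm{I}}_i$, contingency screening, ramping/energy restrictions) and one must verify that none of these weaken the constraints of the final subproblems that generate the output. The key point is that the SOC relaxation and screening affect only the iterates that feed into the device-level solution $(\overline{p}^{(\overline{\tau})},\overline{q}^{(\overline{\tau})},\overline{u}^{(\overline{\tau})})$ and the objective of $Z^{\mathrm{F}}_t$, not its constraints, while the ramping restriction is exactly the kind of tightening handled by Proposition~\ref{prop:rampLPValid}. Once this separation between objective modifications and constraint modifications is made explicit, the feasibility argument of the preceding paragraphs applies without further subtlety.
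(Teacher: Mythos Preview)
Your proposal is correct and follows essentially the same route as the paper's proof: invoke Propositions~\ref{prop:DLFeasible} and~\ref{prop:EQequivalence} to obtain a feasible point for the final subproblems, then check each constraint block of \eqref{SC-ACOPF} on the returned solution using $\mathcal{X}^{\mathrm{uc}}_j$, $\mathcal{X}^{\mathrm{sh}}_{jt}$, $\mathcal{X}^{\mathrm{ac}}_t$, and Proposition~\ref{prop:rampLPValid}. The only addition in the paper is an explicit check that the iterative subproblems $\overline{Z}^{\mathrm{T}}_t$, $\overline{Z}^{\mathrm{I}}_i$, and $Z^{\mathrm{SH}}_{jt}$ are themselves feasible (so that the final iterate actually exists), which you tacitly assume but do not spell out.
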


\section{A Temporally-Decomposed Dual Bound}
\label{sec:upperbound}

The temporally-decomposed subproblems $Z^{\mathrm{T}}_t$, combined with the SOC relaxation proposed in Section~\ref{sec:improve_soc}, yield a set of reasonably sized convex subproblems that can be efficiently solved to global optimality.  We slightly adapt these subproblems so that they generate an upper (dual) bound on the objective value of the problem \eqref{SC-ACOPF}.  Access to such a bound on the primal objective is a useful tool for computational analysis of our algorithms.  Specifically, the upper bound allows for certification of the quality of a primal solution, in that feasible solutions with objective values near the upper bound cannot be significantly improved. We define the upper bound as
\begin{equation}
\tag{UB}
\label{eq:SOCUB}
    \begin{aligned}
        Z^{\mathrm{UB}}\ :=\ \sum_{t \in \mathcal{T}} && \max_{\substack{p_t,q_t,u_t,\\\mu_t,\omega_t}} \quad & C^{\mathrm{UB}}_t (p_t,q_t,u_t)\\
        && \text{s.t.} \quad & (p_t,q_t,\mu_t,\omega_t) \in \mathcal{X}^{\mathrm{soc}}_t,\ (p_{jt},q_{jt},u^{\mathrm{on}}_{jt}) \in \mathcal{Y}^{\mathrm{uc}}_{jt} \quad \forall j \in \mathcal{J}^{\mathrm{sd}},
    \end{aligned}
\end{equation}
where
$$C^{\mathrm{UB}}_t (p,q,u) := \sum_{j \in \mathcal{J}^{\mathrm{cs}}} R^{\mathrm{pow}}_{jt}(p) - \sum_{j \in \mathcal{J}^{\mathrm{pr}}}C^{\mathrm{pow}}_{jt}(p) - \sum_{j \in \mathcal{J}^{\mathrm{ac}}} C^{\mathrm{ac}}_{jt}(p,q) - \sum_{i \in \mathcal{I}} C^{\mathrm{bal}}_{it}(p,q) - \sum_{j \in \mathcal{J}^{\mathrm{sd}}} c^{\mathrm{on}}_{jt} u^{\mathrm{on}}_{jt} - C^{\mathrm{res}}_t(p,q).$$
As the feasible set relaxes that of \eqref{SC-ACOPF} and the objective overestimates the true objective, $Z^\mathrm{UB}$ is an upper bound on $Z^*$. Theorem~\ref{thm:SOCUpperBound} formalizes this result.  This bound relaxes several aspects of the primal problem.  Specifically, penalties from contingencies, energy limit violations, and device startup/shutdown costs are ignored, ramping constraints are removed, integrality constraints are relaxed, and AC power flows are replaced with their SOC relaxation.

\begin{theorem}
    \label{thm:SOCUpperBound}
    The problem \eqref{eq:SOCUB} generates an upper bound on \eqref{SC-ACOPF}; that is, $Z^* \leq Z^{\mathrm{UB}}$.  
\end{theorem}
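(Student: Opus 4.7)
The plan is weak duality by comparison: take any solution feasible for~\eqref{SC-ACOPF} and show (i) that for each~$t$ the period slice is feasible for the subproblem whose sum defines~$Z^{\mathrm{UB}}$, and (ii) that the objective~$\sum_t C^{\mathrm{UB}}_t$ evaluated on this slice dominates~$Z^*$. Since \eqref{eq:SOCUB} separates across time, taking a maximum over each per-period subproblem only grows the sum, which finishes the proof.

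For the feasibility step, I would fix a feasible solution $(p,q,u,v,\Delta,\theta,\tau,\phi)$ to~\eqref{SC-ACOPF}. For each $t\in\mathcal{T}$, the shunt step decision satisfies $u^{\mathrm{sh}}_{jt}\in\mathcal{X}^{\mathrm{sh}}_{jt}\subseteq\mathcal{Y}^{\mathrm{sh}}_{jt}$ by Lemma~\ref{lemma:setRelaxations}(\ref{lemprop:ShuntRelaxInclusion}); combined with $(p_t,q_t,u^{\mathrm{sh}}_t)\in\mathcal{X}^{\mathrm{ac}}_t$ and Proposition~\ref{prop:SOCRelaxation}, this produces $(\mu_t,\omega_t)$ with $(p_t,q_t,\mu_t,\omega_t)\in\mathcal{X}^{\mathrm{soc}}_t$. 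Analogously, $(p_j,q_j,u_j)\in\mathcal{X}^{\mathrm{uc}}_j$ for every $j\in\mathcal{J}^{\mathrm{sd}}$ together with Lemma~\ref{lemma:setRelaxations}(\ref{lemprop:UCRelaxInclusion}) gives $(p_{jt},q_{jt},u^{\mathrm{on}}_{jt})\in\mathcal{Y}^{\mathrm{uc}}_{jt}$ for all $t$. Hence for each $t$ the slice $(p_t,q_t,u_t,\mu_t,\omega_t)$ is feasible for the $t$-th subproblem of~\eqref{eq:SOCUB}.

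For the objective step, I would expand $\sum_t R^{\mathrm{T}}_t(p,q)+\sum_j R^{\mathrm{J}}_j(p,u)$ and compare term-by-term with $\sum_t C^{\mathrm{UB}}_t(p_t,q_t,u_t)$. The power-balance, apparent-power, reserve, and real-power cost/utility terms appear identically in both expressions, so their contributions cancel in the difference. The remaining gap is
\begin{equation*}
\sum_t C^{\mathrm{UB}}_t - Z^*\ =\ \sum_{t\in\mathcal{T}} C^{\mathrm{ctg}}_t(p,q)\ +\ \sum_{j\in\mathcal{J}^{\mathrm{sd}}} C^{\mathrm{e}}_j(p)\ +\ \sum_{j\in\mathcal{J}^{\mathrm{sd}}}\Bigl(C^{\mathrm{uc}}_j(u)-\sum_{t\in\mathcal{T}} c^{\mathrm{on}}_{jt}u^{\mathrm{on}}_{jt}\Bigr).
\end{equation*}
The first two sums are nonnegative because each penalty function is a nonnegative combination of positive parts. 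For the third, $[u^{\mathrm{su}}_{jt}+u^{\mathrm{on}}_{jt'}-1]^+\le u^{\mathrm{su}}_{jt}$ since $u^{\mathrm{on}}_{jt'}\le 1$, and then Assumption~\ref{assump:DataAssumptions}.1 ($c^{\mathrm{su}}_j\ge\sum_{t'<t}c^{\mathrm{dd}}_{jtt'}$) yields $c^{\mathrm{su}}_j u^{\mathrm{su}}_{jt}\ge\sum_{t'<t}c^{\mathrm{dd}}_{jtt'}[u^{\mathrm{su}}_{jt}+u^{\mathrm{on}}_{jt'}-1]^+$; combined with $c^{\mathrm{sd}}_j u^{\mathrm{sd}}_{jt}\ge 0$, this shows $C^{\mathrm{uc}}_j(u)\ge \sum_t c^{\mathrm{on}}_{jt}u^{\mathrm{on}}_{jt}$. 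The gap is therefore nonnegative.

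Combining both steps, $\sum_t C^{\mathrm{UB}}_t(p_t,q_t,u_t)\ge Z^*$ for every feasible SCUC-ACOPF solution, and since each $(p_t,q_t,u_t,\mu_t,\omega_t)$ is feasible for the corresponding subproblem, $Z^{\mathrm{UB}}\ge\sum_t C^{\mathrm{UB}}_t(p_t,q_t,u_t)\ge Z^*$. The only genuinely delicate point is the startup-cost bound, which is precisely the role played by Assumption~\ref{assump:DataAssumptions}.1; everything else is bookkeeping from the relaxation results already proved.
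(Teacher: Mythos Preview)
Your proof is correct and follows essentially the same route as the paper's: establish per-period feasibility via Lemma~\ref{lemma:setRelaxations} and Proposition~\ref{prop:SOCRelaxation}, then bound the objective gap by showing $C^{\mathrm{ctg}}_t\ge 0$, $C^{\mathrm{e}}_j\ge 0$, and $C^{\mathrm{uc}}_j(u)\ge\sum_t c^{\mathrm{on}}_{jt}u^{\mathrm{on}}_{jt}$ via Assumption~\ref{assump:DataAssumptions}.1. One small slip: in your displayed gap equation you write $Z^*$ where you mean the objective value at the chosen feasible solution; the conclusion still follows by specializing to an optimal solution.
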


\section{Computational Results}
\label{sec:results}

We implement Algorithms~\ref{alg:contingency}~and~\ref{alg:tailoredAPM} and evaluate them on the test cases provided in the fourth and final event of GOC3 \citep{elbert2024godata}.  Exact details of our implementation are provided in Section~\ref{sec:result_details}.  The test cases mimic realistic grid data under a diverse set of operating conditions.  The dataset contains 584 cases across~8 networks which vary in scale from 73~buses to 8,316~buses.  We refer to each network by the number of buses it contains. The cases are separated into three divisions, each with a different lookahead window.  Cases in Division~1 (D1) are real-time market optimization problems with an 8 hour lookahead, Division~2 (D2) contains day-ahead problems with a two day lookahead, and Division~3 (D3) contains advisory problems with a one week lookahead.  D1 cases span 18 time periods ($T = 18$), D2 cases 48 periods, and D3 cases 42 periods.  Table~\ref{table:caseBreakdown} shows the number of cases by network size and division.  Cases with the same network and division may differ in other parameters, such as generation and load profiles.
\begin{table}[tp]
    \centering
    \begin{tabular}{c|rrrrrrrr|r}
         \# Buses & 73 & 617 & 1,576 & 2,000 & 4,224 & 6,049 & 6,717 & 8,316 & Total \\
        \hline
         D1 & 24 & 38 & 24 & 18 & 24 & 36 & 36 & 48 & 248\\
         D2 & 40 & 24 & 0 & 18 & 24 & 18 & 18 & 24 & 166\\
         D3 & 24 & 24 & 24 & 3 & 24 & 24 & 3 & 44 & 170\\
         \hline
         Total & 88 & 86 & 48 & 39 & 72 & 78 & 57 & 116 & 584
    \end{tabular}
    \caption{Number of test cases by network size and division.}
    \label{table:caseBreakdown}
\end{table}
\subsubsection*{Computing Resources}
Our experiments are conducted in Julia 1.9 with modeling language JuMP \citep{Lubin2023jump} on the MIT SuperCloud \citep{reuther2018interactive}.  Each case runs on a single 48-core Intel Xeon Platinum 8260@2.40 GHz processor with 192 GB of RAM.  The (mixed-integer) linear bus-level models $\overline{Z}^{\mathrm{I}}_i$ and ramping restriction problems \eqref{eq:rampSafeLP} are solved with Gurobi 10.0, the temporally-decomposed SOC relaxations $\overline{Z}^{\mathrm{T}}_t$ are solved with Mosek 10.0, and the final subproblems $Z^{\mathrm{F}}_t$ are solved with Ipopt 3.14.10 \citep{wachter2006implementation} using linear solver \verb|ma57|.  Feasibility is evaluated with a tolerance of $10^{-8}$; all subproblems are solved with this tolerance.

\subsection{Implementation Details}
\label{sec:result_details}
The decomposed models are solved in parallel on 48 threads.  This allows for the temporally decomposed models ($\overline{Z}^{\mathrm{T}}_t$ or $Z^{\mathrm{F}}_t$) to be solved simultaneously across all time periods, while bus-level subproblems $\overline{Z}^{\mathrm{I}}_i$ are batched into threads.  The default scheme runs 10 iterations of Algorithm~\ref{alg:tailoredAPM} ($\overline{\tau} = 10$) with $\rho_{\tau} = 10^{\tau-1}$.  We adapt the number of iterations and penalty coefficients to conform with time limits, as discussed in the following paragraph.

In accordance with GOC3, a strict runtime limit is imposed: 10 minutes for D1, two hours for D2, and four hours for D3.  As our algorithm runs, we estimate the maximum number of iterations that can be completed within the remaining time and distribute the remaining penalty coefficients evenly on a log scale between the current coefficient value and $10^9$.  The iteration count may not exceed the default value of 10.  We report the average number of completed iterations by network and division in Table~\ref{table:iterationAndContingencyCount}.  For every case in D2 and D3, there is sufficient time to run 10 iterations.  In D1, for networks with over 6,000 buses, the iteration count is often decreased, whereas smaller cases typically run all 10 iterations.  If the time limit is reached while solving the final subproblems $Z^{\mathrm{F}}_t$, we return the current iterate and evaluate its feasibility.  Solutions with negative objective values are considered infeasible.

\begin{table}[tp]
    \centering
    \begin{tabular}{c|cccccccc|c}
         Network & 73 & 617 & 1,576 & 2,000 & 4,224 & 6,049 & 6,717 & 8,316 & Average \\
        \hline
         D1 & $10\,/\,0.83$ & $10\,/\,1$ & $10\,/\,1$ & $9.6\,/\,0.72$ & $10\,/\,0$ & $4.8\,/\,0$ & $2.8\,/\,0$ & $3.7\,/\,0$ & $7\,/\,0.38$\\
         D2 & $10\,/\,0.90$ & $10\,/\,1$ & ---$\,/\,$--- & $10\,/\,1$ & $10\,/\,1$ & $10\,/\,0.94$ & $10\,/\,0.72$ & $10\,/\,0$ & $10\,/\,0.80$\\
         D3 & $10\,/\,1$ & $10\,/\,1$ & $10\,/\,1$ & $10\,/\,1$ & $10\,/\,1$ & $10\,/\,1$ & $10\,/\,1$ & $10\,/\,0.02$ & $10\,/\,0.75$\\
         \hline
         Average & $10\,/\,0.91$ & $10\,/\,1$ & $10\,/\,1$ & $9.8\,/\,0.87$ & $10\,/\,0.67$ & $7.6\,/\,0.53$ & $5.5\,/\,0.28$ & $7.4\,/\,0.01$ & $8.7\,/\,0.61$
    \end{tabular}
    \caption{Average number of iterations / proportion of solutions that consider screened contingencies by network size and division. No 1,576-bus D2 cases are in the dataset.}
    \label{table:iterationAndContingencyCount}
\end{table}

The contingency screening thresholds are selected by validation on a set of cases from previous competition events.  The values differ between large and small networks and are set as follows:
\begin{equation*}
    \begin{aligned}
        & \text{Small Networks (73 and 617 buses): } & N^{\mathrm{ctg}} = 30, \quad N^{\mathrm{br}} = 20, \quad N^{\mathrm{bus}} = 50, \quad s^{\mathrm{thr}} = 10^{-3},\\
        & \text{Large Networks ($\geq\!$ 1,576 buses): } & N^{\mathrm{ctg}} = 30, \quad N^{\mathrm{br}} = 20, \quad N^{\mathrm{bus}} = 20, \quad s^{\mathrm{thr}} = 10^{-2}.
    \end{aligned}
\end{equation*}
If sufficient time is available, we solve the final models $Z^{\mathrm{F}}_t$ with screened contingencies in step~13 of Algorithm~\ref{alg:tailoredAPM}; otherwise, we evaluate the solution to the contingency-free models from step~11.  Table~\ref{table:iterationAndContingencyCount} gives the proportion of cases which include screened contingencies in the evaluated solution.  Cases that do not include contingencies do not have sufficient remaining time for contingency screening and post-screening optimization, do not yield any valid contingencies via Algorithm~\ref{alg:contingency}, or fail to demonstrate a sufficient objective increase in the post-contingency solution relative to the contingency-free solution.
In D1, time limits are binding for large cases, so these cases typically do not consider contingencies. In D2 and D3, most cases consider contingencies.
Across all divisions, we identify impactful contingencies on only a few instances from the 8,316-bus network.  

\subsection{Solution Quality}
\label{sec:results_quality}

In this section, we investigate the quality of the feasible solution returned by Algorithm~\ref{alg:tailoredAPM}.  
Our decomposition scheme yields dual bounds via problem \eqref{eq:SOCUB}.  We evaluate feasible solutions to \eqref{SC-ACOPF} against this upper bound, quantifying the optimality gap of the solution.  From the primal perspective, we compare our solutions against those generated by the benchmark algorithm of \citeauthor{parker2024benchmark} (\citeyear{parker2024benchmark}, ``Benchmark'').  The benchmark is subject to the time limits described in Section~\ref{sec:result_details}.
It solves (MI)LPs with Gurobi and nonlinear problems with Ipopt, using linear solver \verb|ma27|.
We highlight several key differences between the benchmark and our algorithm: first, we iterate between unit commitment and power flow problems to fix a unit commitment solution instead of relying on a single pass; second, we use \textit{ex-ante} bounds to incorporate ramping constraints into the final subproblems instead of projecting onto the feasible set; third, we include reserve and security constraints in the final models.

Figure~\ref{fig:ecdf_scores} compares the quality of feasible solutions to the upper bound.  We compute the percent optimality gap of a feasible solution with objective $z$ by $100 \times \frac{Z{^\mathrm{UB}} - z}{Z^{\mathrm{UB}}}$, where infeasible solutions are assigned a gap of $100\%$.  The figure shows the empirical probability that a test case has a gap no larger than some value, with results reported on the solutions returned by the tailored pADM (Algorithm~\ref{alg:tailoredAPM}) and the benchmark.  The figure also shows distributions for modified versions of our algorithm; these results are discussed in Section~\ref{sec:resultsAblation}

\begin{figure}[tp]
    \centering
    \begin{subfigure}[h]{\linewidth}
    \begin{tikzpicture}[font=\scriptsize]
        \begin{axis}[
          xlabel={Gap (\%)},
          ylabel={Empirical Probability},
          xmin = 0, xmax= 101,
          ymin= 0, ymax= 1,
          ytick distance=0.1, 
          xtick distance=10,
          grid=both,
          legend style={
            cells={anchor=west},
            legend pos=south east,
            }, 
          width=\textwidth,
          height=\axisdefaultheight
        ]
        \addplot+[blue, very thick, no marks, const plot mark left, opacity=1] table [x=tg_ub, y=probability, col sep=space] {data_ecdf_scores_sc.txt};
        \addplot+[cyan, densely dashed, very thick, no marks, const plot mark left, opacity=0.7] table [x=tg_noctg, y=probability, col sep=space] {data_ecdf_scores_sc.txt};
        \addplot+[violet, densely dashdotdotted, very thick, no marks, const plot mark left, opacity=0.7] table [x=tg_noflex, y=probability, col sep=space] {data_ecdf_scores_sc.txt};
        \addplot+[orange, densely dotted, very thick, no marks, const plot mark left, opacity=1] table [x=bm_ub, y=probability, col sep=space] {data_ecdf_scores_sc.txt};
        \legend{Tailored pADM,
                without Screened Contingency,
                without Ramping Restriction,
                Benchmark
                }
        \draw[draw=none, fill=black, fill opacity=.1] (0,0) rectangle (2,1);
        \draw[draw=none, fill=black, fill opacity=.1] (0,0.8) rectangle (102,1);
        \end{axis}
    \end{tikzpicture}
    \end{subfigure}
    \\
    \begin{subfigure}[h]{0.47\linewidth}
    \centering
    \begin{tikzpicture}[font=\scriptsize]
        \begin{axis}[
          xlabel={Gap (\%)},
          ylabel={Empirical Probability},
          width=\textwidth,
          height=0.9\textwidth,
          xmin = 0, xmax= 2.0,
          ymin= 0, ymax= 1,
          ytick distance=0.1, 
          xtick distance=0.2,
          grid=both,
          legend style={
            cells={anchor=west},
            legend pos=south east,
            }, 
        ]
        \addplot+[blue, very thick, no marks, const plot mark left, opacity=1] table [x=tg_ub, y=probability, col sep=space] {data_ecdf_scores_sc.txt};
        \addplot+[cyan, densely dashed, very thick, no marks, const plot mark left, opacity=0.7] table [x=tg_noctg, y=probability, col sep=space] {data_ecdf_scores_sc.txt};
        \addplot+[violet, densely dashdotdotted, very thick, no marks, const plot mark left, opacity=0.7] table [x=tg_noflex, y=probability, col sep=space] {data_ecdf_scores_sc.txt};
        \addplot+[orange, densely dotted, very thick, no marks, const plot mark left, opacity=1] table [x=bm_ub, y=probability, col sep=space] {data_ecdf_scores_sc.txt};
        \end{axis}
    \end{tikzpicture}
    \end{subfigure}
    \hfill
    \begin{subfigure}[h]{0.47\linewidth}
    \centering
    \begin{tikzpicture}[font=\scriptsize]
        \begin{axis}[
          xlabel={Gap (\%)},
          width=\textwidth,
          height=0.9\textwidth,
          xmin = 0, xmax= 102,
          ymin= 0.8, ymax= 1,
          ytick distance=0.02, 
          grid=both,
          legend style={
            cells={anchor=west},
            legend pos=south east,
            font=\tiny,
            }, 
        ]
        \addplot+[blue, very thick, no marks, const plot mark left, opacity=1] table [x=tg_ub, y=probability, col sep=space] {data_ecdf_scores_sc.txt};
        \addplot+[cyan, densely dashed, very thick, no marks, const plot mark left, opacity=0.7] table [x=tg_noctg, y=probability, col sep=space] {data_ecdf_scores_sc.txt};
        \addplot+[violet, densely dashdotdotted, very thick, no marks, const plot mark left, opacity=0.7] table [x=tg_noflex, y=probability, col sep=space] {data_ecdf_scores_sc.txt};
        \addplot+[orange, densely dotted, very thick, no marks, const plot mark left, opacity=1] table [x=bm_ub, y=probability, col sep=space] {data_ecdf_scores_sc.txt};
        \end{axis}
    \end{tikzpicture}
    \end{subfigure}
    \caption{(top) Empirical cumulative distribution function of the optimality gap.  Distributions are shown for feasible solutions obtained from the tailored pADM, tailored pADM without adding contingencies, tailored pADM without the ramping restriction, and the benchmark algorithm. (left) Distribution on the gap interval of $0\% - 2\%$, corresponds to the leftmost shaded region of (top).  (right) Distribution on the upper quintile, corresponds to the uppermost shaded region of (top).
    }
    \label{fig:ecdf_scores}
\end{figure}

The average optimality gap for feasible solutions returned by Algorithm~\ref{alg:tailoredAPM} is 1.33\%, compared to an average gap of 6.93\% for benchmark solutions.  Our algorithm fails to identify a feasible solution within the time limit for 19 of the 584 cases, while the benchmark gives infeasible solutions for 13 cases.  This infeasibility is due to early termination by the time limit; for our algorithm, this only occurs in~D1.  The 80th percentile gap for our algorithm is 1.06\%, compared to 11.83\% for the benchmark.  At the 90th percentile, the corresponding values are 1.54\% and 21.88\%.  On average, our solution objective improves over the benchmark by 6.94\%.  We note that there are four cases with gaps $\geq 80\%$ for both algorithms.  These cases are among those used to evaluate the impact of switching off AC lines, which is not permitted in our model.  Removing these cases reduces the average gap for our algorithm to 0.70\% and for the benchmark to 6.34\%.  Overall, these results demonstrate that our algorithm identifies a near-optimal solution in almost every case and significantly improves upon the benchmark solutions.

\subsection{Algorithm Runtime}
Although we enforce a maximum runtime, our algorithm often terminates before this limit.  Figure~\ref{fig:ecdf_times} shows the empirical probability that a case terminates within some amount of time, by division and network.  For small and medium size networks ($\leq\!$ 4,224 buses), the time limits are rarely binding; that is, the cases can be solved well within the imposed time limit.  For large cases in D1, especially the 6,049 and 6,717-bus networks, the 10 minute time limit is often binding and causes early termination.  In D2, the time limit is only binding for about $15\%$ of cases on the 6,717-bus network; for other networks, the time limit is not binding.  In D3, the time limits are never binding; all cases are solved in under half of the allotted time.

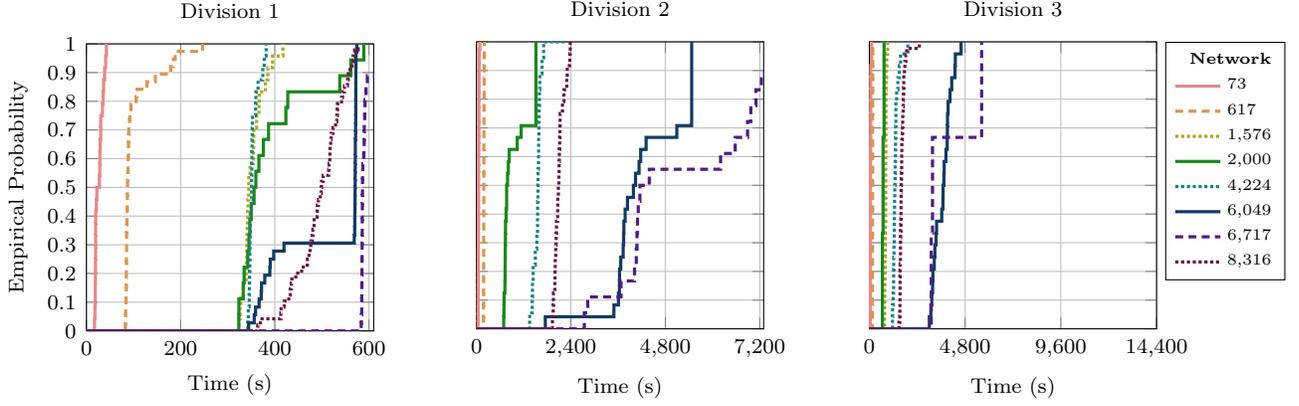
\begin{figure}[tp]
\begin{subfigure}[h]{0.33\linewidth}
\centering
\begin{tikzpicture}[font=\scriptsize]
    \begin{axis}[
      xlabel={Time (s)},
      ylabel={Empirical Probability},
      title={Division 1},
      width=5.4cm,
      height=5.4cm,
      xmin = 0, xmax= 610,
      ymin= 0, ymax= 1,
      ytick distance=0.1, 
      grid=both,
      legend style={
        cells={anchor=west},
        legend pos=south east,
        }, 
    ]
    \addplot+[red7, solid, very thick, no marks, const plot mark left, opacity=1] table [x=N00073D1, y=24, col sep=space] {data_ecdf_time_d1.txt};
    \addplot+[brown7, densely dashed, very thick, no marks, const plot mark left, opacity=1] table [x=N00617D1, y=38, col sep=space] {data_ecdf_time_d1.txt};
    \addplot+[yellow7, densely dotted, very thick, no marks, const plot mark left, opacity=1] table [x=N01576D1, y=24, col sep=space] {data_ecdf_time_d1.txt};
    \addplot+[green5, solid, very thick, no marks, const plot mark left, opacity=1] table [x=N02000D1, y=18, col sep=space] {data_ecdf_time_d1.txt};
    \addplot+[cyan5, densely dotted, very thick, no marks, const plot mark left, opacity=1] table [x=N04224D1, y=24, col sep=space] {data_ecdf_time_d1.txt};
    \addplot+[azure2, solid, very thick, no marks, const plot mark left, opacity=1] table [x=N06049D1, y=36, col sep=space] {data_ecdf_time_d1.txt};
    \addplot+[violet2, densely dashed, very thick, no marks, const plot mark left, opacity=1] table [x=N06717D1, y=36, col sep=space] {data_ecdf_time_d1.txt};
    \addplot+[purple2, densely dotted, very thick, no marks, const plot mark left, opacity=1] table [x=N08316D1, y=48, col sep=space] {data_ecdf_time_d1.txt};
    \end{axis}
\end{tikzpicture}
\end{subfigure}
\hfill
\begin{subfigure}[h]{0.30\linewidth}
\centering
\begin{tikzpicture}[font=\scriptsize]
    \begin{axis}[
      xlabel={Time (s)},
      title={Division 2},
      width=5.4cm,
      height=5.4cm,
      xmin = 0, xmax= 7300,
      ymin= 0, ymax= 1,
      ytick distance=0.1,
      xtick distance=2400,
      yticklabel = \empty,
      grid=both,
    ]
    \addplot+[red7, solid, very thick, no marks, const plot mark left, opacity=1] table [x=N00073D2, y=40, col sep=space] {data_ecdf_time_d2.txt};
    \addplot+[brown7, densely dashed, very thick, no marks, const plot mark left, opacity=1] table [x=N00617D2, y=24, col sep=space] {data_ecdf_time_d2.txt};
    \addplot+[green5, solid, very thick, no marks, const plot mark left, opacity=1] table [x=N02000D2, y=24, col sep=space] {data_ecdf_time_d2.txt};
    \addplot+[cyan5, densely dotted, very thick, no marks, const plot mark left, opacity=1] table [x=N04224D2, y=18, col sep=space] {data_ecdf_time_d2.txt};
    \addplot+[azure2, solid, very thick, no marks, const plot mark left, opacity=1] table [x=N06049D2, y=24, col sep=space] {data_ecdf_time_d2.txt};
    \addplot+[violet2, densely dashed, very thick, no marks, const plot mark left, opacity=1] table [x=N06717D2, y=18, col sep=space] {data_ecdf_time_d2.txt};
    \addplot+[purple2, densely dotted, very thick, no marks, const plot mark left, opacity=1] table [x=N08316D2, y=24, col sep=space] {data_ecdf_time_d2.txt};
    \end{axis}
\end{tikzpicture}
\end{subfigure}
\hfill
\begin{subfigure}[h]{0.35\linewidth}
\centering
\begin{tikzpicture}[font=\scriptsize]
    \begin{axis}[
      xlabel={Time (s)},
      title={Division 3},
      width=5.4cm,
      height=5.4cm,
      xmin = 0, xmax= 14400,
      ymin= 0, ymax= 1,
      ytick distance=0.1,
      xtick distance=4800,
      scaled x ticks = false,
      yticklabel = \empty,
      grid=both,
      legend style={
        cells={anchor=west},
        legend pos=outer north east,
        font=\tiny,
        smooth
        }, 
    ]
    \addlegendimage{empty legend}
    \addplot+[red7, solid, very thick, no marks, const plot mark left, opacity=1] table [x=N00073D3, y=24, col sep=space] {data_ecdf_time_d3.txt};
    \addplot+[brown7, densely dashed, very thick, no marks, const plot mark left, opacity=1] table [x=N00617D3, y=24, col sep=space] {data_ecdf_time_d3.txt};
    \addplot+[yellow7, densely dotted, very thick, no marks, const plot mark left, opacity=1] table [x=N01576D3, y=24, col sep=space] {data_ecdf_time_d3.txt};
    \addplot+[green5, solid, very thick, no marks, const plot mark left, opacity=1] table [x=N02000D3, y=3, col sep=space] {data_ecdf_time_d3.txt};
    \addplot+[cyan5, densely dotted, very thick, no marks, const plot mark left, opacity=1] table [x=N04224D3, y=24, col sep=space] {data_ecdf_time_d3.txt};
    \addplot+[azure2, solid, very thick, no marks, const plot mark left, opacity=1] table [x=N06049D3, y=24, col sep=space] {data_ecdf_time_d3.txt};
    \addplot+[violet2, densely dashed, very thick, no marks, const plot mark left, opacity=1] table [x=N06717D3, y=3, col sep=space] {data_ecdf_time_d3.txt};
    \addplot+[purple2, densely dotted, very thick, no marks, const plot mark left, opacity=1] table [x=N08316D3, y=44, col sep=space] {data_ecdf_time_d3.txt};
    \legend{
    \textbf{\hspace{-.6cm} Network}\\
    73\\
    617\\
    1,576\\
    2,000\\
    4,224\\
    6,049\\
    6,717\\
    8,316\\
    }
    \end{axis}
\end{tikzpicture}
\end{subfigure}
\caption{Empirical cumulative distribution function of solution time by division and network.}
\label{fig:ecdf_times}
\end{figure}

Figure~\ref{fig:time_breakdown} shows the distribution of time spent solving each subproblem class as a proportion of total time.  This runtime includes the total time for the bus-level models $\overline{Z}^{\mathrm{I}}_i$ in the iterative portion (UC Iterations), for the temporal models $\overline{Z}^{\mathrm{T}}_t$ in the iterative portion (SOC Iterations), and for the final models $Z^{\mathrm{F}}_t$ (Final Models).  The figure shows data only for cases that run all 10 iterations and are not terminated early due to the time limit.  Runtime includes neither the time for contingency screening nor for solving final models with contingencies.

\begin{figure}[tp]
\centering
\begin{tikzpicture}[font=\scriptsize]
\begin{axis}[
    xlabel = {Network (Average Runtime [s])},
    ylabel = Runtime Proportion,
    width=.85\textwidth,
    height=5cm,
    ybar stacked,
    bar width = .04\textwidth,
    grid = both,
    xmajorgrids=false,
    xminorgrids=false,
    xtick = data,
    xtick style={draw=none},
    xticklabels={
        {73\\(27s)},
        {617\\(68s)},
        {1,576\\(380s)},
        {2,000\\(366s)},
        {4,224\\(540s)},
        {6,049\\(1141s)},
        {6,717\\(2663s)},
        {8,316\\(1379s)}
    },
    xticklabel style={align=center},
    ymin= 0, ymax= 1,
    legend style={
        cells={anchor=west},
        legend pos=outer north east,
        font=\tiny
        }, 
    reverse legend,
]
    \addplot[blue, fill=blue!40] table [y=ac, meta = Network, x expr=\coordindex, col sep=space] {data_time_cumulative.txt} ;
    \addlegendentry{Final Models}
    \addplot[red, pattern=north east lines, pattern color = red] table [y=soc, meta = Network, x expr=\coordindex, col sep=space] {data_time_cumulative.txt} ;
    \addlegendentry{SOC Iterations}
    \addplot[violet, pattern=crosshatch dots, pattern color=violet] table [y=uc, meta = Network, x expr=\coordindex, col sep=space] {data_time_cumulative.txt} ;
    \addlegendentry{UC Iterations}
    \end{axis}
\end{tikzpicture}
\caption{
Proportion of runtime allocated to UC iterations, SOC iterations, and final models on cases for which the time limit is not binding and 10 iterations are run ($N = 457$).  Runtime is totaled across cases with the same network and across iterations for UC and SOC models.  Average runtime for each network is given in parentheses.
}
\label{fig:time_breakdown}
\end{figure}
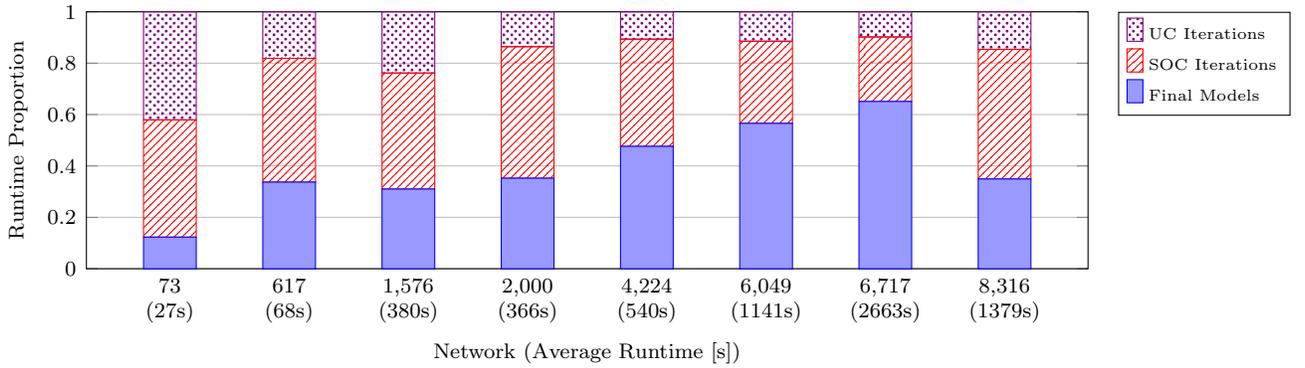

UC iterations occupy a small portion of the total runtime, especially on large networks.  Large networks experience a growth in the proportion of time spent solving the final models; that is, the time spent on UC and SOC iterations does not grow as rapidly as the time spent on the final models.  This suggests that efforts to reduce the runtime of our algorithm on large networks are best spent on accelerating the final subproblems.  On the other hand, for small networks, a majority of the time is spent in the UC and SOC iterations; runtime reductions for these cases can be achieved by decreasing the total number of iterations.

\subsection{Benefits of the Tailored Algorithm}
\label{sec:resultsAblation}

We now evaluate the impact of the ramping restriction (Section~\ref{sec:safeRamp}), the energy limit restriction (Section~\ref{sec:energyMinMax}), the SOC relaxation (Section~\ref{sec:improve_soc}), and contingency screening (Section~\ref{sec:improve_contingency}).

\subsubsection{Analysis of the Ramping Constraint Restriction} \label{sec:resultsAblationSafeRamp}
First, we demonstrate the quality of the ramping intervals generated by the auxiliary linear programs \eqref{eq:rampSafeLP}. Table~\ref{tab:rampsafe} compares the linear programs to other models that yield valid restrictions of the ramping constraint.  These models are evaluated on the number of devices and periods for which the ramping bounds yield a near-empty interval, the total length of the intervals, and a proxy for the volume generated by the intervals, computed as the sum of log interval lengths.  Directly maximizing the volume (i.e., the first row of Table~\ref{tab:rampsafe}) gives the best realization of the volume metric; however, this (convex) nonlinear problem has a nontrivial solve time. Maximizing the sum of interval lengths with the linear program \eqref{eq:rampSafeObj}--\eqref{constr:rampSafeUp} speeds up computation but generates an overly sparse solution with many near-empty intervals, even when using an interior point solver without crossover.  Such a solution reduces the amount of power injection flexibility in the final models.  The solution at equality for the heuristic expressions \eqref{constr:rampSafeDownHeur}--\eqref{constr:rampSafeUpHeur} distributes slack to the ramping bounds evenly between contiguous periods.  This yields intervals which are relatively non-sparse and a volume that is within 99.9\% of the upper bound, with a reduced total interval length. Adding the heuristic constraints to the linear program \eqref{eq:rampSafeObj}--\eqref{constr:rampSafeUpHeur} combines the benefits: a relatively fast method that generates near-optimal intervals by the sum and volume metrics and yields few near-empty intervals.

\begin{table}[tp]
    \centering
    \begin{tabular}{r|c|r|r|r}
        Model & Time (s)
        & $|\{j,t: \delta_{jt} \leq 10^{-8}\}|$
        &
        $\sum_{j,t} \delta_{jt} \times 10^{-3}$
        &
        $\sum_{j,t} v(\delta_{jt}) \times 10^{-3}$ 
\\
\hline 
        $\max \sum_{j,t} v(\delta_{jt})$ s.t. \eqref{constr:rampSafeDown}--\eqref{constr:rampSafeUp} & 9.1 & 354 (0.5\%) & 99.87 (100\%) & 42.35 (100\%)
\\
        LP \eqref{eq:rampSafeObj}--\eqref{constr:rampSafeUp}  &  0.2 & 7,874 (11.7\%) & 99.87 (100\%) & 36.67 (86.6\%)
\\
        Heuristic \eqref{constr:rampSafeDownHeur}--\eqref{constr:rampSafeUpHeur} & 0.1 & 325 (0.5\%) & 89.70 (89.8\%) & 42.28 (99.9\%)
\\
        LP \eqref{eq:rampSafeObj}--\eqref{constr:rampSafeUpHeur} 
         &  0.2 & 319 (0.5\%) & 99.80 (99.9\%) & 42.31 (99.9\%)
    \end{tabular}
    \caption{Comparison of ramping interval quality on a representative 6,049-bus case. Interval lengths are defined as $\delta_{jt} = \delta^-_{jt} + \delta^+_{jt}$, and the volume objective is given by $v(\delta_{jt})=\log(\delta_{jt}+1)$. Percentages are relative to the total number of elements or the global maxima of $\sum_{j,t}\delta_{jt}$ and $\sum_{j,t}v(\delta_{jt})$.} 
    \label{tab:rampsafe}
\end{table}

Next, we compare Algorithm~\ref{alg:tailoredAPM} to an approach that fixes real power injection $p^{\mathrm{tot}}$ to the final bus-level solution ${\overline{p}^{\mathrm{tot}}}^{(\overline{\tau})}$ in the final subproblems $Z^{\mathrm{F}}_t$.  This is equivalent to taking $\delta = 0$ and demonstrates the impact of the flexibility provided by the ramping restriction.  Fixing power injection significantly degrades the solution quality, as shown in Figure~\ref{fig:ecdf_scores}.  With fixed power injection, the number of infeasible cases increases from 19 to 66.  Relative to fixed injection, the ramping restriction increases the objective for feasible cases by an average of 3.09\% with a maximum improvement of 2.6x.  The total objective improvement across all cases (including infeasible cases) is 12.68\%.

\subsubsection{Analysis of the Energy Limit Restriction}
Even with the energy limit penalty ignored in the final subproblems, none of the test cases incur any penalty at our reported solution.  To demonstrate the benefit of the restriction, we consider 9 cases on the 1,576-bus network, taken from the third event of GOC3.  These are the only cases from this event that have nonzero energy limit penalty in our reported solution when the penalty is ignored in the final subproblems. 
In these 9 cases, including the energy limit penalty restriction in the final objective reduces the total penalty $\sum_{j \in \mathcal{J}^{\mathrm{sd}}} C^{\mathrm{e}}_j(p)$ by an average of 98.99\% and improves the solution objective by an average of 8.68\%.  Table~\ref{table:eminmaxCaseSummary} shows the solution objectives and penalty contributions for these cases.

\begin{table}[tp]
    \centering
    \begin{tabular}{cc|r|rr|rr|}
        &&& \multicolumn{2}{c|}{without Restriction} & \multicolumn{2}{c|}{with Restriction}\\
        Division & Scenario & \multicolumn{1}{c|}{$Z^{\mathrm{UB}}$} & \multicolumn{1}{c}{Objective} & \multicolumn{1}{c|}{Penalty} & \multicolumn{1}{c}{Objective} & \multicolumn{1}{c|}{Penalty} \\
        \hline
        D2 & 027 & 572,227,184 & 498,453,819 & 58,480,001 & 564,196,225 & 0 \\
        D2 & 031 & 484,996,329 & 411,492,289 & 37,520,166 & 474,577,847 & 599,096 \\
        D2 & 032 & 650,963,270 & 588,681,039 & 37,680,001 & 640,863,341 & 375,629 \\
        D2 & 033 & 949,087,700 & 899,846,594 & 37,680,001 & 941,670,329 & 418,576 \\
        D2 & 036 & 652,367,566 & 607,441,569 & 37,680,001 & 648,867,311 & 360,291 \\
        D2 & 041 & 484,735,515 & 418,757,179 & 37,553,010 & 475,422,381 & 887,888 \\
        D2 & 042 & 650,664,039 & 587,119,183 & 37,680,001 & 636,952,776 & 382,855 \\
        D2 & 043 & 948,728,190 & 899,529,862 & 37,680,001 & 937,687,552 & 395,104 \\
        D3 & 027 & 1,920,855,345 & 1,849,812,212 & 20,800,000 & 1,906,039,779 & 0
    \end{tabular}
    \caption{Performance on 1,576-bus cases from event 3 with and without the energy restriction included in the final subproblem objective.  The SOC upper bound, feasible solution objective, and energy penalty are shown.}
    \label{table:eminmaxCaseSummary}
\end{table}

\subsubsection{Analysis of the Second-Order Cone Relaxation}
The main benefit of the SOC relaxation is to reduce the computational cost of each iteration.  We quantify this benefit by comparing the solve time of the final models $Z^{\mathrm{F}}_t$, which contain exact nonconvex ACOPF constraints, to that of the SOC relaxations $\overline{Z}^{\mathrm{T}}_t$.  Although the final models differ in other areas, including the ramping restriction and fixing of the unit commitment variables, the comparison indicates the relative scale of the runtimes.  Figure~\ref{fig:time_breakdown} shows that the time spent on 10 iterations of the SOC models is on the same order as the time spent on the final supbroblems.  Comparing the time to solve the first-iteration SOC models (which have penalty coefficient $\rho = 0$) against the final subproblems, we find that the SOC models solve on average 76.62\% faster than the AC models.  Across all cases, the total time spent on the first SOC iteration is 88.92\% less than time spent on the final subproblems.  This indicates that the SOC relaxation yields large reductions in solve times and that the reduction is more significant in cases with long solve times.  The speed-up allows more iterations to be run and permits large cases to be solved within the time limits.  

The quality of the SOC relaxation can be observed from the results in Section~\ref{sec:results_quality}, which demonstrate the existence of AC-feasible solutions with small optimality gaps relative to the dual bounds.  As these bounds are derived from the SOC relaxation, small gaps suggest that this relaxation does not significantly reduce accuracy relative to the AC model.

\subsubsection{Analysis of the Contingency Screening Heuristic}

We compare the final solution from Algorithm~\ref{alg:tailoredAPM} to the solution $(p^{\mathrm{F}},q^{\mathrm{F}},u^{\mathrm{F}})$ generated by the contingency-free final subproblems in step 11 of the algorithm.  Figure~\ref{fig:ecdf_scores} plots an empirical cumulative distribution function for the optimality gap of these contingency-free solutions.  Per Table~\ref{table:iterationAndContingencyCount}, 39\% of reported solutions for the full algorithm already are from the contingency-free solve.  For the remaining cases, Figure~\ref{fig:ecdf_scores} shows improvement in the gap when contingencies are considered.  In cases where contingency screening is completed, the post-contingency solutions improve the total objective by an average of 0.45\% and reduce the value of the contingency penalty $\sum_{t \in \mathcal{T}} C^{\mathrm{ctg}}_t(p,q)$ by an average of 24.45\%. Across all cases, the post-contingency solutions reduce the total contingency penalty by 71.47\%.  These results demonstrate that contingency screening eliminates most high-impact contingency costs.

\subsection{Differences from GO Competition Evaluation}

The GOC3 test cases allow for some AC branches to be dynamically switched off.  In this work, we describe our algorithm and evaluate its performance in a switching-free environment, where all lines remain on.  
We convert all cases to a switching-free version, and duplicate cases (included in the dataset with and without switching) are consolidated.

GOC3 evaluated software from 14 teams.  For reproducibility, we compare only against the open-source benchmark solver \citep{parker2024benchmark}.  \cite{elbert2024godata} provide the results of an independent evaluation of submissions from every team, including the solver described in this work.  These results include additional experiments on a 6,708-bus network generated from industry data that was not released publicly and a 23,643-bus network withheld from this work due to binding memory constraints.

\section{Conclusion}
\label{sec:conclusion}

In this paper, we demonstrate that high-quality solutions to realistic industry-scale SCUC-ACOPF problems can be found within stringent time limits.  We present a decomposition scheme that separates the SCUC-ACOPF problem into a set of device-level mixed-integer linear programs and a set of temporally-separate nonlinear programs.  We apply a penalty alternating direction method to this decomposition and prove its convergence.  We introduce a number of heuristics to improve the solution quality and computational cost of the algorithm, including restrictions of time-coupling constraints, a second-order cone relaxation, and a contingency screening algorithm.  The algorithm tailored with these heuristics demonstrates superior performance on large-scale test cases and satisfies strict time limits, identifying near-optimal solutions relative to a dual bound.

\section*{Acknowledgement}
We acknowledge funding support from the ARPA-E GO Competition Challenge 3. We also thank the MIT SuperCloud and Lincoln Laboratory Supercomputing Center for providing access to high performance computing resources as well as consultation and support through the development process.  Finally, we thank the GOC3 operations and administration teams at Pacific Northwest National Laboratory and the U.S. Department of Energy for their efforts and accessibility over the course of the competition.

\bibliographystyle{informs2014}
\bibliography{AlternatingMethods_SCUCACOPF.bib}

\newpage
\appendix
\section{Appendix}

This appendix is organized as follows.  Section~\ref{ecsec:UCSet} provides a formulation for the feasible region of device-level unit commitment decisions.  Section~\ref{ecsec:PISet} describes the feasible region for device-level real power and reserve decisions.  Section~\ref{ecsec:gsf} defines generalized shift factors and describes the scheme for efficiently computing them across contingencies.  Section~\ref{ecsec:stylizednotation} introduces compact notation for the temporally-decomposed subproblems, which is used in analysis.  Section~\ref{ecsec:technicallemma} gives a pair of relevant technical lemmas.  Section~\ref{ecsec:Proofs} provides proofs of our results.

\subsection{Device-Level Unit Commitment Constraints}
\label{ecsec:UCSet}

In this section, we describe the constraints which define the sets $\mathcal{X}^{\mathrm{u}}_j$.  These constraints are as follows:
\begin{subequations}
    \label{constr:UCExtraSet}
    \begin{align}
        & u^{\mathrm{on}}_{jt} = 1 \quad & \forall j \in \mathcal{J}^{\mathrm{sd}},\ t \in \mathcal{T}^{\mathrm{mr}}_j, \label{constr:UCMustRun}\\
        & u^{\mathrm{on}}_{jt} = 0 \quad & \forall j \in \mathcal{J}^{\mathrm{sd}},\ t \in \mathcal{T}^{\mathrm{mo}}_j, \label{constr:UCMustOutage}\\
        & u^{\mathrm{su}}_{jt} \leq 1 - \sum_{t' \in \mathcal{T}^{\mathrm{dn,min}}_{jt}} u^{\mathrm{sd}}_{jt'} \quad & \forall j \in \mathcal{J}^{\mathrm{sd}},\ t \in \mathcal{T}, \label{constr:UCMinDowntime}\\
        & u^{\mathrm{sd}}_{jt} \leq 1 - \sum_{t' \in \mathcal{T}^{\mathrm{up,min}}_{jt}} u^{\mathrm{su}}_{jt'} \quad & \forall j \in \mathcal{J}^{\mathrm{sd}},\ t \in \mathcal{T}, \label{constr:UCMinUptime}\\
        & \sum_{t \in W} u^{\mathrm{su}}_{jt} \leq u^{\mathrm{su,max}}_{j}(W) \quad & \forall j \in \mathcal{J}^{\mathrm{sd}},\ W \in \mathcal{W}^{\mathrm{su,max}}_j. \label{constr:UCMaxStartup}
    \end{align}
\end{subequations}
Constraints \eqref{constr:UCMustRun}--\eqref{constr:UCMustOutage} enforce must-run and must-outage requirements, where $\mathcal{T}^{\mathrm{mr}}_j$ (resp.\ $\mathcal{T}^{\mathrm{mo}}_j$) gives the set of time periods during which device $j$ must be on (resp.\ off).  Constraints \eqref{constr:UCMinDowntime}--\eqref{constr:UCMinUptime} enforce minimum downtime and uptime requirements, and \eqref{constr:UCMaxStartup} limits the number of device startups over some time interval.  The sets $\mathcal{T}^{\mathrm{dn,min}}_{jt}$ (resp.\  $\mathcal{T}^{\mathrm{up,min}}_{jt}$) contain prior time periods $t' < t$ within the minimum downtime (resp.\ uptime) ranges relative to period $t$ for device $j$.  A set $W \in \mathcal{W}^{\mathrm{su,max}}_{j}$ is a set of time periods subject to a startup limit $(W \subseteq \mathcal{T})$, and $u^{\mathrm{su,max}}_j(W)$ gives the corresponding limit.  From these constraints, we define $\mathcal{X}^{\mathrm{u}}_j := \{\{u^{\mathrm{on}}_{jt},u^{\mathrm{su}}_{jt},u^{\mathrm{sd}}_{jt}\}_{t \in \mathcal{T}} \,:\, \eqref{constr:UCExtraSet}\}$ for all $j \in \mathcal{J}^{\mathrm{sd}}$, relying on the separability of the constraints over devices.

\subsection{Device-Level Real Power and Reserve Constraints}
\label{ecsec:PISet}

Here, we detail the device-level real power constraints of the sets $\mathcal{X}^{\mathrm{p}}_{jt}$.  We consider three reserve categories: up-reserve products for online devices, down-reserve products for online devices, and up-reserve products for offline devices.  We let $\mathcal{R}_{\mathrm{on}\uparrow}$, $\mathcal{R}_{\mathrm{on}\downarrow}$, and $\mathcal{R}_{\mathrm{off}\uparrow}$ be the subsets of $\mathcal{R}$ containing products assigned to the three categories.  These reserve subsets are totally ordered on the quality of the product, e.g., \verb|Regulation| $>$ \verb|Synchronized| $>$ \verb|Ramping| for elements of $\mathcal{R}_{\mathrm{on}\uparrow}$.  For a complete description of modeled reserve products, see \citeappendix{holzer2023gridec}.  Real power injection and reserve products are constrained by the following logic:
\begin{subequations}
    \label{constr:PIRealSet}
    \begin{align}
        & \sum_{\substack{r' \in \mathcal{R}_a\\r' \geq r}} p^{\mathrm{res}}_{jtr'} \leq p^{\mathrm{res,max}}_{jr} u^{\mathrm{on}}_{jt} \quad & \forall a \in \{\mathrm{on}\uparrow,\mathrm{on}\downarrow\},\ j \in \mathcal{J}^{\mathrm{sd}},\ t \in \mathcal{T},\ r \in \mathcal{R}_a, \label{constr:PIAbsReserveOn}\\
        & \sum_{\substack{r' \in \mathcal{R}_{\mathrm{off}\uparrow}\\r' \geq r}} p^{\mathrm{res}}_{jtr'} \leq p^{\mathrm{res,max}}_{jr} (1-u^{\mathrm{on}}_{jt}) \quad & \forall j \in \mathcal{J}^{\mathrm{sd}},\ t \in \mathcal{T},\ r \in \mathcal{R}_{\mathrm{off}\uparrow}, \label{constr:PIAbsReserveOff}\\
        & p^{\mathrm{on}}_{jt} \leq p^{\mathrm{max}}_{jt} u^{\mathrm{on}}_{jt} - \begin{cases}
            {\displaystyle \sum_{r \in \mathcal{R}_{\mathrm{on}\uparrow}}} p^{\mathrm{res}}_{jtr} & \text{if } j \in \mathcal{J}^{\mathrm{pr}}\\
            {\displaystyle \sum_{r \in \mathcal{R}_{\mathrm{on}\downarrow}}} p^{\mathrm{res}}_{jtr} & \text{if } j \in \mathcal{J}^{\mathrm{cs}}\\
        \end{cases} \quad & \forall j \in \mathcal{J}^{\mathrm{sd}},\ t \in \mathcal{T}, \label{constr:PIRelReserveOnMax}\\
        & p^{\mathrm{on}}_{jt} \geq p^{\mathrm{min}}_{jt} u^{\mathrm{on}}_{jt} + \begin{cases}
            {\displaystyle \sum_{r \in \mathcal{R}_{\mathrm{on}\downarrow}} p^{\mathrm{res}}_{jtr}} & \text{if } j \in \mathcal{J}^{\mathrm{pr}}\\
            {\displaystyle \sum_{r \in \mathcal{R}_{\mathrm{on}\uparrow}} p^{\mathrm{res}}_{jtr}} & \text{if } j \in \mathcal{J}^{\mathrm{cs}}
        \end{cases} \quad & \forall j \in \mathcal{J}^{\mathrm{sd}},\ t \in \mathcal{T}, \label{constr:PIRelReserveOnMin}\\
        & p^{\mathrm{tot}}_{jt} \leq p^{\mathrm{max}}_{jt} (1-u^{\mathrm{on}}_{jt}) - \sum_{r \in \mathcal{R}_{\mathrm{off}\uparrow}} p^{\mathrm{res}}_{jtr} \quad & \forall j \in \mathcal{J}^{\mathrm{sd}},\ t \in \mathcal{T}. \label{constr:PIRelReserveOffMax}
    \end{align}
\end{subequations}
Constraints \eqref{constr:PIAbsReserveOn}--\eqref{constr:PIAbsReserveOff} enforce absolute limits on reserve products, where $p^{\mathrm{res,max}}_{jr}$ gives the maximum quantity of reserves at least as good as product $r$.  Whether a device can provide a reserve product depends on if the device has the appropriate commitment status for the reserve category.  Constraints \eqref{constr:PIRelReserveOnMax}--\eqref{constr:PIRelReserveOffMax} enforce reserve limits relative to dispatched real power.  The parameters $p^{\mathrm{max}}_{jt}$ and $p^{\mathrm{min}}_{jt}$ give maximum and minimum total real power.  These limits count reserves that cut into headroom to the minimum or maximum; that is, reserve products that could increase real power injection count against maximum power, and those that decrease injection against the minimum.  Whether a product increases or decreases power injection depends on whether the device produces or consumes power.  We define $\mathcal{X}^{\mathrm{p}}_{jt} := \{(u^{\mathrm{on}}_{jt},p^{\mathrm{tot}}_{jt},p^{\mathrm{on}}_{jt},\{p^{\mathrm{res}}_{jtr}\}_{r \in \mathcal{R}})\, :\, \eqref{constr:PIRealSet}\}$ for all $j \in \mathcal{J}^{\mathrm{sd}}$ and $t \in \mathcal{T}$.  This construction relies on the separability of the constraints over devices and time.

\subsection{Generalized Shift Factors and Fast Contingency Evaluation}
\label{ecsec:gsf}

This section defines the generalized shift factors $F_{kji}$ for any connected subnetwork $\mathcal{J} \subseteq \mathcal{J}^{\mathrm{ac}}$.

Let $M \in \mathbb{R}^{|\mathcal{I}| \times |\mathcal{J}^{\mathrm{ac}}|}$ be the bus-branch incidence matrix, where $$M_{ij} = \begin{cases}
    1 & \text{if } i_j = i\\
    -1 & \text{if } i'_j = i\\
    0 & \text{otherwise.}
\end{cases}$$
Let $B(\mathcal{J}) \in \mathbb{R}^{|\mathcal{J}^{\mathrm{ac}}| \times |\mathcal{J}^{\mathrm{ac}}|}$ be the diagonal branch susceptance matrix, where $$B(\mathcal{J})_{jj} = \begin{cases}
    -b_j \quad &\text{if } j \in \mathcal{J}\\
    0 \quad & \text{otherwise}.
\end{cases}$$ Under the DC approximation, the real power flows $p^{\mathrm{fl}} \in \mathbb{R}^{|\mathcal{J}^{\mathrm{ac}}|}$ over AC branches  are computed as a linear function of the bus phase angles $\theta \in \mathbb{R}^{|\mathcal{I}|}$:
$$p^{\mathrm{fl}} = B(\mathcal{J})M\tp \theta.$$
For simplicity, we ignore the dependence on phase shifts $\phi$.  Then, given a vector of net nodal real power injections $p^{\mathrm{inj}} \in \mathbb{R}^{|\mathcal{I}|}$, nodal power balance requires 
$$p^{\mathrm{inj}} = Mp^{\mathrm{fl}} = M B(\mathcal{J}) M \tp \theta.$$

The matrix $MB(\mathcal{J})M\tp$ is the bus admittance matrix. For some reference bus $i_0 \in \mathcal{I}$, we construct
$$H(\mathcal{J}) _{ii'} = \begin{cases}
    (MB(\mathcal{J})M\tp)_{ii'} & \text{if } i_0 \not \in \{i,i'\}\\
    1 & \text{if } i = i' = i_0\\
    0 & \text{otherwise.}
\end{cases}$$
If the branches $\mathcal{J}$ form a connected network, $H(\mathcal{J})$ is invertible.  Feasible bus angles can then be computed by inverting $H(\mathcal{J})$ and setting $\theta_{i_0} = 0$; that is, 
$$\theta = SH(\mathcal{J})^{-1} p^{\mathrm{inj}},$$
where $S = I - \mathbf{e}_{i_0} \mathbf{e}_{i_0}\tp$ sets the $i_0$-th entry to $0$.  Here, $I$ denotes the identity matrix and $\mathbf{e}_{i_0}$ is the $i_0$-th standard basis vector.  Now, the real power flows are given by $$p^{\mathrm{fl}} = B(\mathcal{J})M \tp SH(\mathcal{J})^{-1} p^{\mathrm{inj}}.$$
The coefficient $(B(\mathcal{J})M \tp SH(\mathcal{J})^{-1})_{ji}$ gives the GSF from bus $i$ to line $j$ under network $\mathcal{J}$.  Therefore, the GSF from bus $i$ to line $j$ under contingency $k$ is defined by $$F_{kji} := (B(\mathcal{J}^{\mathrm{ac}} \setminus \{j_k\}) M \tp SH(\mathcal{J}^{\mathrm{ac}} \setminus \{j_k\})^{-1})_{ji}.$$

Computing factors $F_{kji}$ requires the inverse of matrix $H(\mathcal{J}^{\mathrm{ac}} \setminus \{j_k\})$, which may be difficult to compute for all contingencies $k$.
Instead, we express this matrix as a rank-1 update to the matrix $H(\mathcal{J}^{\mathrm{ac}})$.  If $i_0 \not \in \{i_{j_k},i'_{j_k}\}$, then
$$H(\mathcal{J}^{\mathrm{ac}} \setminus \{j_k\}) = H(\mathcal{J}^{\mathrm{ac}}) + b_{j_k} (\mathbf{e}_{i_{j_k}} - \mathbf{e}_{i'_{j_k}}) (\mathbf{e}_{i_{j_k}} - \mathbf{e}_{i'_{j_k}}) \tp.$$
 If $i_0 \in \{i_{j_k},i'_{j_k}\}$, a similar rank-1 update can be derived.  We leverage this structure by first evaluating $H(\mathcal{J}^{\mathrm{ac}})^{-1}$, then computing $H(\mathcal{J}^{\mathrm{ac}} \setminus \{j_k\})^{-1}$ for relevant $k$ by the Sherman-Morrison formula \citepappendix{hager1989updatingec}. This method for the efficient computation of GSFs across multiple contingencies is similar to that of \citeappendix{holzer2023fastec}.

\subsection{Stylized Notation for Temporally-Decomposed Subproblems}
\label{ecsec:stylizednotation}

To aid in the analysis of Algorithm~\ref{alg:basicAPM}, we introduce a generic form of the models $Z^{\mathrm{T}}_t(\overline{p}_t,\overline{q}_t,\overline{u}_t;\rho)$:
\begin{equation}
    \label{eq:genericSubproblem}
    \begin{aligned}
        \max_{x,y,z} \quad & F(x,y,z) - \rho \Gamma (x,\overline{x})\\
        \text{s.t.} \quad & x \in \mathcal{X},\\
        & y = A_{z}(x),\\
        & z \in \mathcal{Z}.
    \end{aligned}
\end{equation}
The objects that define this model have the following properties:
\begin{enumerate}
    \item The feasible set of \eqref{eq:genericSubproblem} is bounded with diameter $D$;
    \item $\mathcal{X}$ is polyhedral;
    \item $A_z$ is a linear function for all $z \in \mathcal{Z}$;
    \item $F(x,y,z)$ is Lipschitz continuous with parameter $L$ over the feasible set of \eqref{eq:genericSubproblem}, so that all $\nabla F \in \partial F(x,A_z(x),z)$ with $x \in \mathcal{X}$ and $z \in \mathcal{Z}$ satisfy $\norm{\nabla F}_2 \leq L$;
    \item Penalty function $\Gamma(x,\overline{x}) = \norm{E(x - \overline{x})}_2^2$ where $E \tp E = E$; and
    \item If $(\overline{p}_j,\overline{q}_j,\overline{u}_j) \in \mathcal{X}^{\mathrm{uc}}_j$ for all $j \in \mathcal{J}^{\mathrm{sd}}$ and $\overline{u}^{\mathrm{sh}}_{jt} \in \mathcal{X}^{\mathrm{sh}}_{jt}$ for all $j \in \mathcal{J}^{\mathrm{sh}}$ and $t \in \mathcal{T}$, then $\overline{x} \in \mathcal{X}$.
\end{enumerate}

We demonstrate that, for any $t \in \mathcal{T}$ and solution $(\overline{p},\overline{q},\overline{u})$, the model $Z^{\mathrm{T}}_t(\overline{p}_t,\overline{q}_t,\overline{u}_t;\rho)$ is of the form \eqref{eq:genericSubproblem}.  
Let $$\mathcal{Z} = \{(p^{\mathrm{fr}}_t,p^{\mathrm{to}}_t,q^{\mathrm{fr}}_t,q^{\mathrm{to}}_t,v_t) \,:\, \exists(\Delta_t,\theta_t,\tau_t,\phi_t) \text{ s.t. } \eqref{constr:ACAngleDiff}-\eqref{constr:DCBalance},\,\eqref{constr:BusSets}-\eqref{constr:DCSets}\}$$ and 
\begin{equation*}
    \renewcommand\arraystretch{1.5}
    \mathcal{X} = \left \{
        (p_t,q_t,u_t) \setminus (p^{\mathrm{fr}}_t,p^{\mathrm{to}}_t,p^{\mathrm{sh}}_t,q^{\mathrm{fr}}_t,q^{\mathrm{to}}_t,q^{\mathrm{sh}}_t)\,:\ \begin{array}{lr}
        (p_{jt},q_{jt},u^{\mathrm{on}}_{jt}) \in \mathcal{Y}^{\mathrm{uc}}_{jt} & \quad  \forall j \in \mathcal{J}^{\mathrm{sd}},\\
        u^{\mathrm{sh}}_{jt} \in \mathcal{Y}^{\mathrm{sh}}_{jt} & \quad \forall j \in \mathcal{J}^{\mathrm{sh}}
    \end{array}
    \right \},
\end{equation*}
where we abuse the notation $(p_t,q_t,u_t) \setminus (p^{\mathrm{fr}}_t,p^{\mathrm{to}}_t,p^{\mathrm{sh}}_t,q^{\mathrm{fr}}_t,q^{\mathrm{to}}_t,q^{\mathrm{sh}}_t)$ to represent the projection of the solution $(p_t,q_t,u_t)$ onto all coordinates except the branch and shunt power flow variables.  The variables $x$ lie in $\mathcal{X}$ and the variables $z$ lie in $\mathcal{Z}$.  Let 
\begin{equation*}
    \renewcommand\arraystretch{1.5}
    A_z(x) = \begin{bmatrix}
        g^{\mathrm{sh}}_j v_{i_j t}^2 u^{\mathrm{sh}}_{jt} & \multirow{2}{*}{$\quad \forall j \in \mathcal{J}^{\mathrm{sh}}$}\\
        -b^{\mathrm{sh}}_j v_{i_j t}^2 u^{\mathrm{sh}}_{jt} & 
    \end{bmatrix} = \begin{bmatrix}
        p^{\mathrm{sh}}_t\\q^{\mathrm{sh}}_t
    \end{bmatrix}.
\end{equation*}
For fixed $z$, the function $A_z$ is linear in $x$ as the variables $v_{i_j t}$ are contained in $z$ and the variables $u^{\mathrm{sh}}_{jt}$ are contained in $x$.  
Any $x \in \mathcal{X}$ contains all the elements of $(p_t,q_t,u_t)$ except for branch and shunt power flows, while $y = A_z(x)$ contains the shunt power flow variables $(p^{\mathrm{sh}}_t,q^{\mathrm{sh}}_t)$.  Any $(x,y,z)$ with $y = A_z(x)$ satisfies constraints \eqref{constr:ShuntReal}--\eqref{constr:ShuntReactive}.

Under these constructions, the feasible set of \eqref{eq:genericSubproblem} corresponds to the feasible set of $Z^{\mathrm{T}}_t$, which is bounded under Assumption~\ref{assump:DataAssumptions}.  We call the diameter of this set $D$.  Under Assumption~\ref{assump:DataAssumptions}, $\mathcal{X}$ is polyhedral because the sets $\mathcal{Y}^{\mathrm{uc}}_{jt}$ and $\mathcal{Y}^{\mathrm{sh}}_{jt}$ are polyhedral.

With the sets $(\mathcal{X},\mathcal{Z})$ and functions $A_z$ constructed in this way, any feasible $(x,y,z)$ contains exactly one copy of each variable in $(p_t,q_t,u_t)$.  We define $F(x,y,z) = R^{\mathrm{T}}_t(p_t,q_t)$, which is Lipschitz continuous over any bounded domain.  To ensure that all subgradients on the domain of \eqref{eq:genericSubproblem} are properly bounded, we define the constant $L$ to be the Lipschitz constant of $F$ over some bounded set that strictly contains this domain.   

We construct $$\overline{x} = (\overline{p}_t,\overline{q}_t,\overline{u}_t) \setminus (\overline{p}^{\mathrm{fr}}_t,\overline{p}^{\mathrm{to}}_t,\overline{p}^{\mathrm{sh}}_t,\overline{q}^{\mathrm{fr}}_t,\overline{q}^{\mathrm{to}}_t,\overline{q}^{\mathrm{sh}}_t),$$ and accordingly define $$\Gamma(x,\overline{x}) = \norm{E(x - \overline{x})}^2_2 = \Gamma_2(p_t,q_t,u_t,\overline{p}_t,\overline{q}_t,\overline{u}_t),$$ 
where $E$ is a diagonal matrix with an entry of $1$ if the corresponding element of $x$ is a copied variable (i.e., penalized in $\Gamma_2$) and an entry of $0$ otherwise.  This matrix clearly satisfies $E \tp E = E$.  In the generic form, $x \in \mathcal{X}$ contains every copied variable, so we can represent $\Gamma_2$ as a function of $Ex$ and $E \overline{x}$.

Finally, suppose that $(\overline{p}_j,\overline{q}_j,\overline{u}_j) \in \mathcal{X}^{\mathrm{uc}}_j$ for all $j \in \mathcal{J}^{\mathrm{sd}}$ and $\overline{u}^{\mathrm{sh}}_{jt} \in \mathcal{X}^{\mathrm{sh}}_{jt}$ for all $j \in \mathcal{J}^{\mathrm{sh}}$ and $t \in \mathcal{T}$.  By Lemma~\ref{lemma:setRelaxations}, it directly follows that $\overline{x} \in \mathcal{X}$.
This demonstrates that $Z^{\mathrm{T}}_t(\overline{p}_t,\overline{q}_t,\overline{u}_t;\rho)$ satisfies the structure of problem \eqref{eq:genericSubproblem}.

\subsection{Technical Lemmas}
\label{ecsec:technicallemma}

This section presents two technical lemmas that are used to prove results from the body of this paper.  Lemma~\ref{lemma:basicAPMDevicePenalty} bounds the magnitude of the penalty term for the device-level block subproblems by the magnitude of the penalty term from the previous iteration solution to the temporally-decomposed block subproblems.  Lemma~\ref{lemma:shiftedSequence} establishes a fundamental property on translating the limit point of a convergent sequence contained in a polyhedron.

\begin{lemma}
    \label{lemma:basicAPMDevicePenalty}
    Let $(p^{(\tau)},q^{(\tau)},u^{(\tau)},\overline{p}^{(\tau)},\overline{q}^{(\tau)},\overline{u}^{(\tau)})$ be a sequence of iterates generated by Algorithm~\ref{alg:basicAPM}, where the subproblems $Z^{\mathrm{J}}_j$ are solved to global optimality.  Let $R$ be the same constant as in the statement of Theorem~\ref{thm:basicAPMInfeasibility}.  Then,
    $$\Gamma_1(p^{(\tau)},q^{(\tau)},u^{(\tau)},\overline{p}^{(\tau)},\overline{q}^{(\tau)},\overline{u}^{(\tau)}) \leq \frac{R}{\rho_{\tau}} + \Gamma_1 (p^{(\tau)},q^{(\tau)},u^{(\tau)},\overline{p}^{(\tau-1)},\overline{q}^{(\tau-1)},\overline{u}^{(\tau-1)}) \quad \forall \tau \in \lBrack \tau \rBrack \setminus \{1\}.$$
\end{lemma}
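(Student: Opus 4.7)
The strategy is to exploit the global optimality of the device-level subproblems $Z^{\mathrm{J}}_j$ and the shunt subproblems $Z^{\mathrm{SH}}_{jt}$ at iteration $\tau$, using the previous iterate $(\overline{p}^{(\tau-1)}, \overline{q}^{(\tau-1)}, \overline{u}^{(\tau-1)})$ as a feasible competitor. I would split $\Gamma_1$ into its shunt-free part $\Gamma'_1$ and the shunt penalty, treat each separately, and then recombine.

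First I would handle the $\Gamma'_1$ portion device by device. Fix $j \in \mathcal{J}^{\mathrm{sd}}$. Since $(\overline{p}^{(\tau-1)}_j, \overline{q}^{(\tau-1)}_j, \overline{u}^{(\tau-1)}_j)$ lies in $\mathcal{X}^{\mathrm{uc}}_j$ (it was returned by $Z^{\mathrm{J}}_j$ in the previous iteration) and $(\overline{p}^{(\tau)}_j, \overline{q}^{(\tau)}_j, \overline{u}^{(\tau)}_j)$ is a global maximizer of $Z^{\mathrm{J}}_j(p^{(\tau)}_j, q^{(\tau)}_j, u^{(\tau)}_j; \rho_\tau)$, the optimality inequality gives
\begin{equation*}
R^{\mathrm{J}}_j(\overline{p}^{(\tau)}_j, \overline{u}^{(\tau)}_j) - \rho_\tau \Gamma'_1(\cdots,\overline{p}^{(\tau)}_j,\overline{q}^{(\tau)}_j,\overline{u}^{(\tau)}_j) \geq R^{\mathrm{J}}_j(\overline{p}^{(\tau-1)}_j, \overline{u}^{(\tau-1)}_j) - \rho_\tau \Gamma'_1(\cdots,\overline{p}^{(\tau-1)}_j,\overline{q}^{(\tau-1)}_j,\overline{u}^{(\tau-1)}_j).
\end{equation*}
Rearranging and using $R^{\mathrm{J}}_j(\overline{p}^{(\tau)}_j, \overline{u}^{(\tau)}_j) - R^{\mathrm{J}}_j(\overline{p}^{(\tau-1)}_j, \overline{u}^{(\tau-1)}_j) \leq R_j$ (by the definition of $R_j$ as the maximum range of $R^{\mathrm{J}}_j$ over $\mathcal{X}^{\mathrm{uc}}_j$), I would divide by $\rho_\tau$, sum over $j \in \mathcal{J}^{\mathrm{sd}}$, and use the separability of $\Gamma'_1$ across devices together with $R = \sum_j R_j$ to obtain the desired $R/\rho_\tau$ bound on the $\Gamma'_1$ component.

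Next I would dispense with the shunt contribution. Because $\overline{u}^{\mathrm{sh}(\tau)}_{jt}$ is defined to globally optimize $-\rho_\tau \|u^{\mathrm{sh}(\tau)}_{jt} - \overline{u}^{\mathrm{sh}}_{jt}\|_1$ over $\mathcal{X}^{\mathrm{sh}}_{jt}$, and $\overline{u}^{\mathrm{sh}(\tau-1)}_{jt} \in \mathcal{X}^{\mathrm{sh}}_{jt}$ is feasible, the same optimality argument (with no objective-difference term since there is no $R^{\mathrm{J}}$-type term attached to the shunts) gives
\begin{equation*}
|u^{\mathrm{sh}(\tau)}_{jt} - \overline{u}^{\mathrm{sh}(\tau)}_{jt}| \leq |u^{\mathrm{sh}(\tau)}_{jt} - \overline{u}^{\mathrm{sh}(\tau-1)}_{jt}| \qquad \forall j \in \mathcal{J}^{\mathrm{sh}},\ t \in \mathcal{T}.
\end{equation*}
Summing these inequalities yields that the shunt component of $\Gamma_1$ at $(\tau,\tau)$ is bounded by its counterpart at $(\tau,\tau-1)$, contributing nothing to the $R/\rho_\tau$ slack.

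Finally, I would add the two bounds, using $\Gamma_1 = \Gamma'_1 + \sum_{j,t}|u^{\mathrm{sh}}_{jt} - \overline{u}^{\mathrm{sh}}_{jt}|$, to obtain the stated inequality. The main conceptual work is the first paragraph's optimality comparison; the remaining algebra and the shunt argument are straightforward, and no subtle compactness or continuity considerations arise because global optimality of each subproblem is assumed outright.
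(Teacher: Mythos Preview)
Your proposal is correct and follows essentially the same approach as the paper: compare the current device-level optimizer against the previous iterate using global optimality of $Z^{\mathrm{J}}_j$, bound the resulting objective difference by $R_j$, handle the shunt terms separately via optimality of $Z^{\mathrm{SH}}_{jt}$, and recombine using the decomposition $\Gamma_1 = \Gamma'_1 + \sum_{j,t}|u^{\mathrm{sh}}_{jt} - \overline{u}^{\mathrm{sh}}_{jt}|$.
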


\begin{proof}
    Take any $j \in \mathcal{J}^{\mathrm{sd}}$ and iteration $\tau > 1$.  Then, $(\overline{p}^{(\tau)}_j,\overline{q}^{(\tau)}_j,\overline{u}^{(\tau)}_j)$ is globally optimal for
    \linebreak
    $Z^{\mathrm{J}}_j(p^{(\tau)}_j,q^{(\tau)}_j,u^{(\tau)}_j;\rho_\tau)$.  Under Assumption~\ref{assump:DataAssumptions}, the variables which appear in the objective function $R^{\mathrm{J}}_j$ have bounded domain, so $\underline{R}_j \leq R^{\mathrm{J}}_j(p,u) \leq \overline{R}_j$ for some bounds $(\underline{R}_j,\overline{R}_j)$ and the constants $R_j = \overline{R}_j - \underline{R}_j$ used to define $R$ are well defined.  

    As the feasible region of $Z^{\mathrm{J}}_j$ does not change between iterations, $(\overline{p}^{(\tau-1)}_j,\overline{q}^{(\tau-1)}_j,\overline{u}^{(\tau-1)}_j)$ remains feasible.  Applying global optimality for the maximization, we have for all $j \in \mathcal{J}^{\mathrm{sd}}$ that
    \begin{equation*}
        \begin{aligned}
            & R^{\mathrm{J}}_j(\overline{p}^{(\tau-1)}_j,\overline{u}^{(\tau-1)}_j) - \rho_{\tau} \Gamma_1'(p^{(\tau)}_j,q^{(\tau)}_j,u^{(\tau)}_j,\overline{p}^{(\tau-1)}_j,\overline{q}^{(\tau-1)}_j,\overline{u}^{(\tau-1)}_j)\\
            \leq \ & R^{\mathrm{J}}_j(\overline{p}^{(\tau)}_j,\overline{u}^{(\tau)}_j) - \rho_{\tau} \Gamma_1'(p^{(\tau)}_j,q^{(\tau)}_j,u^{(\tau)}_j,\overline{p}^{(\tau)}_j,\overline{q}^{(\tau)}_j,\overline{u}^{(\tau)}_j).
        \end{aligned}
    \end{equation*}
    Additionally, for $j \in \mathcal{J}^{\mathrm{sh}}$, we have that 
    $$\rho_{\tau} \norm{u^{\mathrm{sh} (\tau)}_j - \overline{u}^{\mathrm{sh}(\tau-1)}_j}_1 \geq \rho_{\tau} \norm{u^{\mathrm{sh}(\tau)}_j - \overline{u}^{\mathrm{sh}(\tau)}_j}_1$$
    by definition of subproblems $Z^{\mathrm{SH}}_j$.
    These properties yield the relation
    \begin{equation*}
        \begin{aligned}
            & \rho_{\tau} \Gamma_1 (p^{(\tau)},q^{(\tau)},u^{(\tau)},\overline{p}^{(\tau)},\overline{q}^{(\tau)},\overline{u}^{(\tau)})\\
            = \ & \rho_{\tau} \left ( \sum_{j \in \mathcal{J}^{\mathrm{sd}}} \Gamma_1'(p^{(\tau)}_j,q^{(\tau)}_j,u^{(\tau)}_j,\overline{p}^{(\tau)}_j,\overline{q}^{(\tau)}_j,\overline{u}^{(\tau)}_j) + \sum_{j \in \mathcal{J}^{\mathrm{sh}}} \norm{{u^{\mathrm{sh}(\tau)}_j} - {\overline{u}^{\mathrm{sh}(\tau)}_j}}_1 \right)\\
            \leq \ & \sum_{j \in \mathcal{J}^{\mathrm{sd}}} \left ( R^{\mathrm{J}}_j(\overline{p}^{(\tau)}_j,\overline{u}^{(\tau)}_j) - R^{\mathrm{J}}_j(\overline{p}^{(\tau-1)}_j,\overline{u}^{(\tau-1)}_j) + \rho_{\tau} \Gamma_1'(p^{(\tau)}_j,q^{(\tau)}_j,u^{(\tau)}_j,\overline{p}^{(\tau-1)}_j,\overline{q}^{(\tau-1)}_j,\overline{u}^{(\tau-1)}_j) \right )\\
            & \qquad + \rho_{\tau} \sum_{j \in \mathcal{J}^{\mathrm{sh}}} \norm{{u^{\mathrm{sh}(\tau)}_j} - {\overline{u}^{\mathrm{sh}(\tau-1)}_j}}_1\\
            \leq \ & \sum_{j \in \mathcal{J}^{\mathrm{sd}}} R_j + \rho_{\tau} \Gamma_1 (p^{(\tau)},q^{(\tau)},u^{(\tau)},\overline{p}^{(\tau-1)},\overline{q}^{(\tau-1)},\overline{u}^{(\tau-1)})
        \end{aligned}
    \end{equation*}
    where the first line follows from the definitions of $\Gamma_1$ and $\Gamma'_1$, the second from global optimality for $Z^{\mathrm{J}}_j$ and $Z^{\mathrm{SH}}_j$, and the third from the objective function bounds and again the definitions of $\Gamma_1$ and $\Gamma'_1$.  As $R = \sum_{j \in \mathcal{J}^{\mathrm{sd}}} R_j$, this proves the result.
\end{proof}

\begin{lemma}
    \label{lemma:shiftedSequence}
    Let $\mathcal{X}$ be a polyhedron  with $\{x^*,\tilde{x}\} \in \mathcal{X}$.  Consider a sequence $\{x_i\}_{i=1}^\infty \subset \mathcal{X}$ such that $\lim_{i \rightarrow \infty} x_i = x^*$.  Then, there exists some $\{\lambda_i\}_{i=1}^\infty \subset \mathbb{R}_\geq$ such that the sequence $\tilde{x}_i := x_i + \lambda_i (\tilde{x} - x^*)$ satisfies $\{\tilde{x}_i\}_{i=1}^\infty \subset \mathcal{X}$ and $\lim_{i \rightarrow \infty} \tilde{x}_i = \tilde{x}$.
\end{lemma}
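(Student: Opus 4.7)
The plan is to construct the scalars $\lambda_i$ explicitly by exploiting the polyhedral description of $\mathcal{X}$. Write $\mathcal{X} = \{x : A x \leq b\}$ where $A$ has rows $a_k\tp$ and $b$ has entries $b_k$ for $k = 1, \ldots, m$. For any candidate $\lambda \geq 0$, the inclusion $x_i + \lambda(\tilde{x} - x^*) \in \mathcal{X}$ is equivalent to the family of scalar inequalities $\lambda \cdot a_k\tp(\tilde{x} - x^*) \leq b_k - a_k\tp x_i$ for all $k$. Rows with $a_k\tp(\tilde{x} - x^*) \leq 0$ are automatically satisfied whenever $\lambda \geq 0$, since $x_i \in \mathcal{X}$ forces $b_k - a_k\tp x_i \geq 0$. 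Rows with $a_k\tp(\tilde{x} - x^*) > 0$ yield an upper bound on admissible $\lambda$.

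Accordingly, I would let $\mathcal{K}^+ := \{k : a_k\tp(\tilde{x} - x^*) > 0\}$ and define
$$\lambda_i := \min\left\{1,\ \min_{k \in \mathcal{K}^+} \frac{b_k - a_k\tp x_i}{a_k\tp(\tilde{x} - x^*)}\right\},$$
interpreting the inner minimum as $+\infty$ if $\mathcal{K}^+ = \emptyset$ (in which case simply $\lambda_i = 1$). From $x_i \in \mathcal{X}$ it follows that each ratio in the inner minimum is nonnegative, so $\lambda_i \in [0,1]$; and by construction of $\lambda_i$, the shifted point $\tilde{x}_i = x_i + \lambda_i(\tilde{x} - x^*)$ satisfies every defining inequality of $\mathcal{X}$.

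It remains to verify that $\lambda_i \to 1$. Since $\tilde{x} \in \mathcal{X}$, every $k \in \mathcal{K}^+$ obeys $a_k\tp \tilde{x} = a_k\tp x^* + a_k\tp(\tilde{x} - x^*) \leq b_k$, equivalently $\frac{b_k - a_k\tp x^*}{a_k\tp(\tilde{x} - x^*)} \geq 1$. Because $|\mathcal{K}^+| \leq m$ is finite and $x_i \to x^*$, passing to the limit in each of the finitely many ratios gives
$$\lim_{i \to \infty} \lambda_i \,=\, \min\left\{1,\ \min_{k \in \mathcal{K}^+} \frac{b_k - a_k\tp x^*}{a_k\tp(\tilde{x} - x^*)}\right\} = 1,$$
and therefore $\tilde{x}_i = x_i + \lambda_i(\tilde{x} - x^*) \to x^* + (\tilde{x} - x^*) = \tilde{x}$, as desired.

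The argument is largely mechanical; the only subtlety I anticipate is the bookkeeping around edge cases: handling $\mathcal{K}^+ = \emptyset$ separately, confirming that $\lambda_i \geq 0$ uniformly (which relies on $x_i \in \mathcal{X}$), and leveraging finiteness of $m$ to exchange the minimum with the limit. None of these pose a real obstacle, so the main work of the proof is simply setting up notation so that these polyhedral facts can be invoked cleanly.
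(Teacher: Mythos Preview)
Your proposal is correct and matches the paper's proof essentially step for step: the same explicit construction of $\lambda_i$ as the minimum of $1$ and the tightest ratio over the rows with $a_k\tp(\tilde x - x^*) > 0$, followed by the same limit argument exploiting $\tilde x \in \mathcal{X}$ and finiteness of the row index set. The only differences are notational (you write $\mathcal{K}^+$ where the paper writes $M$).
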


\begin{proof}
    Define $(A,b)$ with $A \in \mathbb{R}^{m \times n}$ and $b \in \mathbb{R}^m$ such that $\mathcal{X} = \{x \,:\, Ax \leq b\}$.  Let $M := \{j \in \lBrack m \rBrack \,:\, a_j \tp (\tilde{x} - x^*) > 0\}$, where $a_j$ is the $j$-th row of matrix $A$.  We define a sequence of multipliers $\lambda_i$ by 
    $$\lambda_i := \begin{cases}
        \min\ \left \{\underset{j \in M}{\min}\  \frac{b_j - a_j \tp x_i}{a_j \tp (\tilde{x} - x^*)},\, 1 \right \} & \quad \text{ if }M \neq \emptyset\\
        1 & \quad \text{ if } M = \emptyset.
    \end{cases}$$
    These multipliers yield the sequence of iterates $\tilde{x}_i := x_i + \lambda_i (\tilde{x} - x^*)$.  Note that $a_j \tp x_i \leq b_j$ as $x_i \in \mathcal{X}$ and $a_j \tp (\tilde{x} - x^*) > 0$ for $j \in M$, so $\lambda_i \geq 0$.

    First, we establish that the sequence $\tilde{x}_i$ lies in $\mathcal{X}$.  Consider some index $i$.  For $j \in \lBrack m \rBrack \setminus M$,
    $$a_j \tp \tilde{x}_i = a_j \tp x_i + \lambda_i a_j \tp (\tilde{x} - x^*) \leq a_j \tp x_i \leq b,$$
    as $\lambda_i \geq 0$ and $a_j \tp (\tilde{x} - x^*) \leq 0$.  For $j \in M$, we have that $\lambda_i \leq \frac{b_j - a_j \tp x_i}{a_j \tp (\tilde{x} - x^*)}$ and $a_j \tp (\tilde{x} - x^*) > 0$, so
    \begin{equation*}
        \begin{aligned}
            a_j \tp \tilde{x}_i =\ & a_j \tp x_i + \lambda_i a_j \tp (\tilde{x} - x^*)\\
            \leq\ & a_j \tp x_i  + \left ( \frac{b_j - a_j \tp x_i}{a_j \tp (\tilde{x} - x^*)} \right ) a_j \tp (\tilde{x} - x^*)\\
            =\ & b_j.
        \end{aligned}
    \end{equation*}
    These results demonstrate that $A \tilde{x}_i \leq b$ for all $i$, and thus $\{\tilde{x}_i\}_{i=1}^\infty \subset \mathcal{X}$.

    Next, we consider the limit of the iterates $\lambda_i$.  If $M = \emptyset$, clearly $\lim_{i \rightarrow \infty} \lambda_i = 1$.  Otherwise, 
    \begin{equation*}
        \begin{aligned}
            \lim_{i \rightarrow \infty} \lambda_i =\ & \lim_{i \rightarrow \infty}\ \min\ \left \{\underset{j \in M}{\min}\  \frac{b_j - a_j \tp x_i}{a_j \tp (\tilde{x} - x^*)},\, 1 \right \}\\
            =\ & \min\  \left \{\underset{j \in M}{\min}\  \lim_{i \rightarrow \infty}\ \frac{b_j - a_j \tp x_i}{a_j \tp (\tilde{x} - x^*)}, 1 \right \}\\
            =\ & \min\  \left \{\underset{j \in M}{\min}\  \frac{b_j - a_j \tp x^*}{a_j \tp (\tilde{x} - x^*)}, 1 \right \}\\
            \geq\ & \min\ \left \{ \underset{j \in M}{\min}\  \frac{b_j - a_j \tp x^*}{b_j - a_j \tp x^*}, 1 \right \}\\
            =\ & 1,
        \end{aligned}
    \end{equation*}
    where the second line follows from continuity of the $\min$ function, the third line from the convergence of $\{x_i\}_{i=1}^\infty$ to $x^*$, and the fourth from the feasibility of $\tilde{x}$ and $x^*$ for $\mathcal{X}$.  
    As $\lambda_i \leq 1$, we have shown that $\lim_{i \rightarrow \infty} \lambda_i = 1$.  Therefore, $\lim_{i \rightarrow \infty} \tilde{x}_i = x^* + (\tilde{x} - x^*) = \tilde{x}$. 
\end{proof}

\subsection{Proofs of Results}
\label{ecsec:Proofs}

\subsubsection*{Proof of Lemma~\ref{lemma:setRelaxations}}

\begin{proof}
We first consider property~\ref{lemprop:UCRelaxInclusion}.  Consider some $j \in \mathcal{J}^{\mathrm{sd}}$ and $(p_j,q_j,u_j) \in \mathcal{X}^{\mathrm{uc}}_j$.  By Assumption~\ref{assump:DataAssumptions}, such a $u_j$ satisfies \eqref{eq:SUSDRelaxation}.  Further, $(u_{j},p_{j},q_{j})$ satisfy \eqref{constr:PIReal}--\eqref{constr:PIReactive} by definition of $\mathcal{X}^{\mathrm{uc}}_j$.  As $p^{\mathrm{supc}}_{jtt'} \geq 0$, we have by \eqref{constr:PISUDef} that $$p^{\mathrm{su}}_{jt} = \sum_{t' \in \mathcal{T}^{\mathrm{supc}}_{jt}} p^{\mathrm{supc}}_{jtt'} u^{\mathrm{su}}_{jt'} \geq 0,$$
and thus \eqref{constr:RelaxPISUDef} is satisfied.  Similarly, we see that \eqref{constr:RelaxPISDDef} holds.  Therefore, with certificate $(u^{\mathrm{su}}_j,u^{\mathrm{sd}}_j)$, it holds that $(p_{jt},q_{jt},u^{\mathrm{on}}_{jt}) \in \mathcal{Y}^{\mathrm{uc}}_{jt}$ for all $t \in \mathcal{T}$, proving property~\ref{lemprop:UCRelaxInclusion}.  Next, consider property~\ref{lemprop:ShuntRelaxInclusion}.  Clearly, as any $u^{\mathrm{sh}}_{jt} \in \mathcal{X}^{\mathrm{sh}}_{jt}$ satisfies \eqref{constr:ShuntSets}, $u^{\mathrm{sh}}_{jt} \in [u^{\mathrm{sh,min}}_j,u^{\mathrm{sh,max}}_j]$ and thus $u^{\mathrm{sh}}_{jt} \in \mathcal{Y}^{\mathrm{sh}}_{jt}$.
\end{proof}

\subsubsection*{Proof of Proposition~\ref{prop:EQequivalence}}

\begin{proof}
We will prove that feasible solutions for each model can be mapped to feasible solutions for the other model with the same objective value and the same values for the copied variables.

First, consider a feasible solution $(p,q,u,v,\Delta,\theta,\tau,\phi)$ for \eqref{SC-ACOPF}.  Define $(\overline{p},\overline{q},\overline{u}) = (p,q,u)$.  By this definition, the last constraint of \eqref{EQ} is satisfied for solution $(p,q,u,\overline{p},\overline{q},\overline{u})$ and the copied variables have the same values in both solutions.  As $(p,q,u,v,\Delta,\theta,\tau,\phi)$ satisfies \eqref{constr:PowerFlow}, it holds that $(p_t,q_t,u^{\mathrm{sh}}_t) \in \mathcal{X}^{\mathrm{ac}}_t$ for all $t \in \mathcal{T}$.  Additionally, for all $j \in \mathcal{J}^{\mathrm{sh}}$ and $t \in \mathcal{T}$, it holds that $u^{\mathrm{sh}}_{jt} = \overline{u}^{\mathrm{sh}}_{jt} \in \mathcal{X}^{\mathrm{sh}}_{jt} \subseteq \mathcal{Y}^{\mathrm{sh}}_{jt}$ by Lemma~\ref{lemma:setRelaxations}. Similarly, as $(p,q,u)$ satisfy \eqref{constr:UC}--\eqref{constr:Ramp}, for all $j \in \mathcal{J}^{\mathrm{sd}}$ it holds that $(p_j,q_j,u_j) = (\overline{p}_j,\overline{q}_j,\overline{u}_j) \in \mathcal{X}^{\mathrm{uc}}_j$ and further, by Lemma~\ref{lemma:setRelaxations}, $(p_{jt},q_{jt},u^{\mathrm{on}}_{jt}) \in \mathcal{Y}^{\mathrm{uc}}_{jt}$ for all $t \in \mathcal{T}$.  This demonstrates that $(p,q,u,\overline{p},\overline{q},\overline{u})$ is feasible for \eqref{EQ}.  Considering the objective, $\sum_{t \in \mathcal{T}} R^{\mathrm{T}}_t(p_t,q_t) + \sum_{j \in \mathcal{J}^{\mathrm{sd}}} R^{\mathrm{J}}_j(p_j,u_j) = \sum_{t \in \mathcal{T}} R^{\mathrm{T}}_t(p_t,q_t) + \sum_{j \in \mathcal{J}^{\mathrm{sd}}} R^{\mathrm{J}}_j(\overline{p}_j,\overline{u}_j)$, and the solutions have the same objective value in their respective models.

Next, consider a feasible solution $(p,q,u,\overline{p},\overline{q},\overline{u})$ for \eqref{EQ}.  First, $\overline{u}^{\mathrm{sh}}_{jt} \in \mathcal{X}^{\mathrm{sh}}_{jt}$ for all $j \in \mathcal{J}^{\mathrm{sh}}$ and $t \in \mathcal{T}$ and thus satisfies \eqref{constr:ShuntSets}.  By the copy constraints, $u^{\mathrm{sh}} = \overline{u}^{\mathrm{sh}}$.  Then, $(p_t,q_t,\overline{u}^{\mathrm{sh}}_t) = (p_t,q_t,u^{\mathrm{sh}}_t) \in \mathcal{X}^{\mathrm{ac}}_t$ for all $t \in \mathcal{T}$ and we can construct a corresponding solution $(v_t,\Delta_t,\theta_t,\tau_t,\phi_t)$ such that constraints \eqref{constr:ACAngleDiff}--\eqref{constr:DCSets} are satisfied by the solution $(p,q,\overline{u},v,\Delta,\theta,\tau,\phi)$.  In the remainder of the proof, we refer to this solution as the \textit{candidate solution}.

The candidate solution is constructed so that the copied variables have the same values as in \linebreak 
$(p,q,u,\overline{p},\overline{q},\overline{u})$.  As $(\overline{p}_j,\overline{q}_j,\overline{u}_j) \in \mathcal{X}^{\mathrm{uc}}_j$ for all $j \in \mathcal{J}^{\mathrm{sd}}$, these variables satisfy \eqref{constr:UC}--\eqref{constr:Ramp}.  As constraints \eqref{constr:UC} only contain variables $\overline{u}$, the candidate solution satisfies these constraints.  By the copy constraints, $p^{\mathrm{su}} = \overline{p}^{\mathrm{su}}$ and $p^{\mathrm{sd}} = \overline{p}^{\mathrm{sd}}$, so $(p,\overline{u})$ satisfies constraints \eqref{constr:PISUDef}--\eqref{constr:PISDDef}.  For the remaining constraints in~\eqref{constr:PIReal}, specifically \eqref{constr:PIRealDef}, \eqref{constr:PIDeviceReserveSet}, and \eqref{constr:PINonneg}, note that $(p_{jt},q_{jt},u^{\mathrm{on}}_{jt}) \in \mathcal{Y}^{\mathrm{uc}}_{jt}$, so there are some $(\tilde{u}^{\mathrm{su}}_j,\tilde{u}^{\mathrm{sd}}_j)$ that allow $(p,u^{\mathrm{on}})$ to be feasible for \eqref{constr:PIReal}.  These remaining constraints depend on $u$ only through $u^{\mathrm{on}}$ and do not contain $\tilde{u}^{\mathrm{su}}$ or $\tilde{u}^{\mathrm{sd}}$.  As $u^{\mathrm{on}} = \overline{u}^{\mathrm{on}}$ by the copy constraints, $(p,\overline{u})$ also satisfies \eqref{constr:PIReal}.  As the copy constraints enforce $(p^{\mathrm{tot}},q^{\mathrm{tot}},q^{\mathrm{res}}) = (\overline{p}^{\mathrm{tot}},\overline{q}^{\mathrm{tot}},\overline{q}^{\mathrm{res}})$, the solution $(p,q,\overline{u})$ satisfies constraints \eqref{constr:PIReactive} because constraints~\eqref{constr:PIReactive} depend on $(p,q)$ only through $(p^{\mathrm{tot}},q^{\mathrm{tot}},q^{\mathrm{res}})$.  Similarly, again as $p^{\mathrm{tot}} = \overline{p}^{\mathrm{tot}}$, the solution $(p,q,\overline{u})$ satisfies constraints \eqref{constr:Ramp}.

Therefore, the candidate solution $(p,q,\overline{u},v,\Delta,\theta,\tau,\phi)$ is feasible for \eqref{SC-ACOPF}.  Comparing the objective, the terms in $R^{\mathrm{J}}_j$ that contain variable $p$ are $R^{\mathrm{pow}}_{jt}$, $C^{\mathrm{pow}}_{jt}$, and $C^{\mathrm{e}}_{j}$, all of which only contain variables $p^{\mathrm{tot}}$.  As the copy constraint enforces $p^{\mathrm{tot}} = \overline{p}^{\mathrm{tot}}$, it holds that $R^{\mathrm{J}}_j(\overline{p}_j,\overline{u}_j) = R^{\mathrm{J}}_j(p_j,\overline{u}_j)$.  Then, $\sum_{t \in \mathcal{T}} R^{\mathrm{T}}_t(p_t,q_t) + \sum_{j \in \mathcal{J}^{\mathrm{sd}}} R^{\mathrm{J}}_j(\overline{p}_j,\overline{u}_j) = \sum_{t \in \mathcal{T}} R^{\mathrm{T}}_t(p_t,q_t) + \sum_{j \in \mathcal{J}^{\mathrm{sd}}} R^{\mathrm{J}}_j(p_j,\overline{u}_j)$ and the solutions have the same objective value in their respective models. 
\end{proof}

\subsubsection*{Proof of Proposition~\ref{prop:DLFeasible}}

\begin{proof}
Under Assumption~\ref{assump:DataAssumptions}, there exists some solution $(\tilde{p},\tilde{q},\tilde{u},\tilde{v},\tilde{\Delta},\tilde{\theta},\tilde{\tau},\tilde{\phi})$ that is feasible for \linebreak
\eqref{SC-ACOPF}.  Take any $(\overline{p},\overline{q},\overline{u})$ such that $(\overline{p}_j, \overline{q}_j,\overline{u}_j) \in \mathcal{X}^{\mathrm{uc}}_j$ for all $j \in \mathcal{J}^{\mathrm{sd}}$ and $\overline{u}^{\mathrm{sh}}_{jt} \in \mathcal{X}^{\mathrm{sh}}_{jt}$ for all $j \in \mathcal{J}^{\mathrm{sh}}$ and $t \in \mathcal{T}$.
We construct the solution $(p,q,u)$ by the following rules.  Take $(p^{\mathrm{fr}},p^{\mathrm{to}},q^{\mathrm{fr}},q^{\mathrm{to}}) = (\tilde{p}^{\mathrm{fr}},\tilde{p}^{\mathrm{to}},\tilde{q}^{\mathrm{fr}},\tilde{q}^{\mathrm{to}})$ and $u = \overline{u}$.  Define $p^{\mathrm{sh}}_{jt} = g^{\mathrm{sh}}_j \overline{u}^{\mathrm{sh}}_{jt} \tilde{v}^2_{i_j t}$ and $q^{\mathrm{sh}}_{jt} = -b^{\mathrm{sh}}_j \overline{u}^{\mathrm{sh}}_{jt} \tilde{v}^2_{i_j t}$ for all $j \in \mathcal{J}^{\mathrm{sh}}$ and $t \in \mathcal{T}$.  All other values in $(p,q)$ take the corresponding values from $(\overline{p},\overline{q})$.  

This construction ensures that the solution $(p,q,u,\overline{p},\overline{q},\overline{u})$ is feasible for the copy constraints of \eqref{EQ}.  Further, the set constraints applied to the variables $(\overline{p},\overline{q},\overline{u})$ in \eqref{EQ} are satisfied by the selection of $(\overline{p},\overline{q},\overline{u})$.  By Lemma~\ref{lemma:setRelaxations}, $u^{\mathrm{sh}}_{jt} = \overline{u}^{\mathrm{sh}}_{jt} \in \mathcal{X}^{\mathrm{sh}}_{jt} \subseteq \mathcal{Y}^{\mathrm{sh}}_{jt}$ for all $j \in \mathcal{J}^{\mathrm{sh}}$ and $t \in \mathcal{T}$ and $(\overline{p}_{jt},\overline{q}_{jt},\overline{u}^{\mathrm{on}}_{jt}) \in \mathcal{Y}^{\mathrm{uc}}_{jt}$ for all $j \in \mathcal{J}^{\mathrm{sd}}$ and $t \in \mathcal{T}$.  As $u = \overline{u}$ satisfies \eqref{constr:UC}, by Assumption~\ref{assump:DataAssumptions}, $u$ satisfies \eqref{eq:SUSDRelaxation}. Further, as $(p,q,u)$ does not differ from $(\overline{p},\overline{q},\overline{u})$ in the variables that appear in constraints \eqref{constr:PIReal}--\eqref{constr:PIReactive} and \eqref{constr:RelaxPI}, we conclude that $(p_{jt},q_{jt},u^{\mathrm{on}}_{jt}) \in \mathcal{Y}^{\mathrm{uc}}_{jt}$ for all $j \in \mathcal{J}^{\mathrm{sd}}$ and $t \in \mathcal{T}$.

Now, consider the solution $(p_t,q_t,\overline{u}^{\mathrm{sh}}_t,\tilde{v}_t,\tilde{\Delta}_t,\tilde{\theta}_t,\tilde{\tau}_t,\tilde{\phi}_t)$.  The components of $(p,q)$ which appear in constraints \eqref{constr:ACAngleDiff}--\eqref{constr:DCSets}, namely $(p^{\mathrm{fr}},p^{\mathrm{to}},p^{\mathrm{sh}},q^{\mathrm{fr}},q^{\mathrm{to}},q^{\mathrm{sh}})$ are constructed to ensure feasibility for these constraints with respect to this solution.  As a result, we see that $(p_t,q_t,\overline{u}^{\mathrm{sh}}_t) = (p_t,q_t,u^{\mathrm{sh}}_t) \in \mathcal{X}^{\mathrm{ac}}_t$ for all $t \in \mathcal{T}$.  This shows that $(p,q,u,\overline{p},\overline{q},\overline{u})$ is feasible for \eqref{EQ}.
\end{proof}

\subsubsection*{Proof of Theorem~\ref{thm:basicAPMInfeasibility}}

\begin{proof}
    To prove the first result, consider any iteration $\tau > 1$ and time period $t \in \mathcal{T}$.  We analyze the generic form \eqref{eq:genericSubproblem} of the model $Z^{\mathrm{T}}_t(\overline{p}^{(\tau-1)}_t,\overline{q}^{(\tau-1)}_t,\overline{u}^{(\tau-1)}_t;\rho_\tau)$.
    
    Let the polyhedron $\mathcal{X}$ be described by $(B,b)$, so that $\mathcal{X} = \{x \,:\, B x \leq b\}$.  Let $(x,y,z)$ be a stationary point of \eqref{eq:genericSubproblem}, in that it satisfies the Karush-Kuhn-Tucker (KKT) conditions.  There must exist some $(\lambda_1,\lambda_2)$ such that $(x,y,z)$ satisfies the following conditions, which are a subset of the KKT conditions:
    \begin{subequations}
        \label{eq:KKT}
        \begin{align}
            & 0 \in -\begin{bmatrix}
                \partial_x (F(x,y,z) - \rho \Gamma(x,\overline{x}))\\
                \partial_y F(x,y,z)
            \end{bmatrix} + \begin{bmatrix}
                B\\
                0
            \end{bmatrix} \tp \lambda_1 + \begin{bmatrix}
                A(z)\\
                -I
            \end{bmatrix} \tp \lambda_2, \label{eq:KKTStationary}\\
            & \lambda_1 \tp (b - B x) = 0, \label{eq:KKTComplementary}\\
            & \lambda_1 \geq 0, \label{eq:KKTDualFeasible}
        \end{align}
    \end{subequations}
    where $I$ represents the identity matrix and $A(z)$ gives the matrix form of the linear map $A_z$.  Note that $\partial_x(F(x,y,z) - \rho \Gamma(x,\overline{x})) = \partial_x F(x,y,z) - \rho \nabla_x \Gamma(x,\overline{x})$ as $\Gamma$ is a smooth function \citepappendix[][Exercise 8.8]{rockafellar2009variationalec}.

    Fix $(\lambda_1,\lambda_2)$ that satisfies \eqref{eq:KKT} and denote by $\nabla F \in \begin{bmatrix}
        \partial_x F(x,y,z)\\
        \partial_y F(x,y,z)
    \end{bmatrix}$ the (partial) subgradient of $F$ at $(x,y,z)$ that satisfies \eqref{eq:KKTStationary}; that is, 
    $$\nabla F = \begin{bmatrix}
        2 \rho E(x - \overline{x}) + B \tp \lambda_1 + A(z) \tp \lambda_2\\
        - \lambda_2
    \end{bmatrix}.$$

    Observe that the iterates $(\overline{p}^{(\tau-1)}_j,\overline{q}^{(\tau-1)}_j,\overline{u}^{(\tau-1)}_j)$ are feasible for the subproblems $Z^{\mathrm{J}}_j$ for all $j \in \mathcal{J}^{\mathrm{sd}}$ and $\overline{u}^{\mathrm{sh}(\tau-1)}_{jt}$ are feasible for $Z^{\mathrm{SH}}_{jt}$ for all $j \in \mathcal{J}^{\mathrm{sh}}$.  Therefore, $\overline{x} \in \mathcal{X}$ by the structure of model \eqref{eq:genericSubproblem}. With $\overline{y} = A(z) \overline{x}$, we have that $(\overline{x},\overline{y},z)$ is feasible for \eqref{eq:genericSubproblem}.

    By the above properties,
    \begin{equation*}
        \begin{aligned}
            \nabla F \tp \begin{bmatrix}
                x - \overline{x}\\
                y - \overline{y}
            \end{bmatrix} & = \begin{bmatrix}
                2 \rho E (x - \overline{x}) + B \tp \lambda_1 + A(z) \tp \lambda_2\\
                - \lambda_2
            \end{bmatrix} \tp \begin{bmatrix}
                x - \overline{x}\\
                y - \overline{y}
            \end{bmatrix}\\
            & = 2 \rho (x - \overline{x})\tp E \tp (x - \overline{x}) + \lambda_1 \tp B (x - \overline{x}) + \lambda_2 \tp (A(z) (x - \overline{x}) - (y - \overline{y}))\\
             & = 2 \rho (x - \overline{x})\tp E \tp (x - \overline{x}) + \lambda_1 \tp (b - B \overline{x})\\
             & \geq 2 \rho (x - \overline{x})\tp E \tp (x - \overline{x})\\
             & = 2 \rho (x - \overline{x})\tp E \tp E (x - \overline{x}) = 2 \rho \norm{E (x - \overline{x})}_2^2.
        \end{aligned}
    \end{equation*}
    where the third relation follows from \eqref{eq:KKTComplementary} and the definitions $[y,\ \overline{y}] = A(z) [x,\ \overline{x}]$, the fourth relation follows from the feasibility of $\overline{x}$ for $\mathcal{X}$ and \eqref{eq:KKTDualFeasible}, and the fifth relation follows from the property $E \tp E = E$.

    We note that $\norm{\nabla F}_2 \leq L$, and that $(x,y,z)$ and $(\overline{x},\overline{y},z)$ are feasible for \eqref{eq:genericSubproblem}, so
        \begin{equation*}
    \begin{aligned}
            2 \rho \norm{E (x - \overline{x})}_2^2 & \leq \nabla F \tp \begin{bmatrix}
                x - \overline{x}\\
                y - \overline{y}
            \end{bmatrix} \leq \norm{\nabla F}_2 \norm{\begin{bmatrix}
                x - \overline{x}\\
                y - \overline{y}
            \end{bmatrix}}_2 \leq L D,
        \end{aligned}
    \end{equation*}
    where the second relation follows from the Cauchy-Schwarz inequality.  Therefore, $\Gamma(x,\overline{x}) = \norm{E (x - \overline{x})}_2^2 \leq \frac{LD}{2 \rho}$.

    We now apply this result to the problems $Z^{\mathrm{T}}_t(\overline{p}^{(\tau-1)}_t,\overline{q}^{(\tau-1)}_t,\overline{u}^{(\tau-1)}_t;\rho_\tau)$ in each period $t \in \mathcal{T}$, where the feasible region diameters are given by $D_t$ and the Lipschitz constants by $L_t$.  Recall that $d$ is the number of copied variables.  By the separability of $\Gamma_2$ and the standard inequality relating the $L_1$ and $L_2$ norm in $\mathbb{R}^d$,
    \begin{equation*}
        \begin{aligned}
            & \Gamma_1(p^{(\tau)},q^{(\tau)},u^{(\tau)},\overline{p}^{(\tau-1)},\overline{q}^{(\tau-1)},\overline{u}^{(\tau-1)})\\
            \leq\ & \sqrt{d \Gamma_2(p^{(\tau)},q^{(\tau)},u^{(\tau)},\overline{p}^{(\tau-1)},\overline{q}^{(\tau-1)},\overline{u}^{(\tau-1)})} \\
            =\ & \sqrt{d \sum_{t \in \mathcal{T}}  \Gamma_2(p^{(\tau)}_t,q^{(\tau)}_t,u^{(\tau)}_t,\overline{p}^{(\tau-1)}_t,\overline{q}^{(\tau-1)}_t,\overline{u}^{(\tau-1)}_t)}\\
            \leq\ & \sqrt{\frac{\sum_{t \in \mathcal{T}} d L_t D_t}{2 \rho_{\tau}}}.
        \end{aligned}
    \end{equation*}
    This proves the first result.

    Now, we consider the second result.  It holds that
    \begin{equation*}
        \begin{aligned}
            & \Gamma_1(\overline{p}^{(\tau)},\overline{q}^{(\tau)},\overline{u}^{(\tau)},\overline{p}^{(\tau-1)},\overline{q}^{(\tau-1)},\overline{u}^{(\tau-1)})\\
            \leq \ & \Gamma_1 (p^{(\tau)},q^{(\tau)},u^{(\tau)},\overline{p}^{(\tau-1)},\overline{q}^{(\tau-1)},\overline{u}^{(\tau-1)}) + \Gamma_1 (p^{(\tau)},q^{(\tau)},u^{(\tau)},\overline{p}^{(\tau)},\overline{q}^{(\tau)},\overline{u}^{(\tau)})\\
            \leq \ & 2 \Gamma_1 (p^{(\tau)},q^{(\tau)},u^{(\tau)},\overline{p}^{(\tau-1)},\overline{q}^{(\tau-1)},\overline{u}^{(\tau-1)}) + \frac{R}{\rho_{\tau}}\\
            \leq \ & \frac{R}{\rho_\tau} + \sqrt{\frac{2\sum_{t \in \mathcal{T}} d L_t D_t}{\rho_{\tau}}},
        \end{aligned}
    \end{equation*}
    where the first inequality follows from the triangle inequality, the second from Lemma~\ref{lemma:basicAPMDevicePenalty}, and the third from the first result of this theorem.  This application of Lemma~\ref{lemma:basicAPMDevicePenalty} requires the global optimality of subproblems $Z^{\mathrm{J}}_j$.  This proves the second result.

    Last, 
    \begin{equation*}
        \begin{aligned}
            & \Gamma_1(p^{(\tau+1)},q^{(\tau+1)},u^{(\tau+1)},p^{(\tau)},q^{(\tau)},u^{(\tau)})\\
            \leq \ & \Gamma_1 (p^{(\tau+1)},q^{(\tau+1)},u^{(\tau+1)},\overline{p}^{(\tau)},\overline{q}^{(\tau)},\overline{u}^{(\tau)}) + \Gamma_1 (p^{(\tau)},q^{(\tau)},u^{(\tau)},\overline{p}^{(\tau)},\overline{q}^{(\tau)},\overline{u}^{(\tau)})\\
            \leq \ & \sqrt{\frac{\sum_{t \in \mathcal{T}} d L_t D_t}{2 \rho_{\tau+1}}} + \frac{R}{\rho_{\tau}} + \Gamma_1 (p^{(\tau)},q^{(\tau)},u^{(\tau)},\overline{p}^{(\tau-1)},\overline{q}^{(\tau-1)},\overline{u}^{(\tau-1)})\\
            \leq \ & \frac{R}{\rho_{\tau}} + \sqrt{\frac{2\sum_{t \in \mathcal{T}} d L_t D_t}{\min\{\rho_{\tau},\rho_{\tau+1}\}}}
        \end{aligned}
    \end{equation*}
    where the first inequality follows from the triangle inequality and the remaining inequalities follow from the first result of this theorem and Lemma~\ref{lemma:basicAPMDevicePenalty}.
    This proves the third result.
\end{proof}

\subsubsection*{Proof of Theorem~\ref{thm:bAPMConvergence}}

\begin{proof}
    Consider some limit point $(p^*,q^*,u^*,\overline{p}^*,\overline{q}^*,\overline{u}^*)$ of the sequence $(p^{(\tau)},q^{(\tau)},u^{(\tau)},\overline{p}^{(\tau)},\overline{q}^{(\tau)},\overline{u}^{(\tau)})$ generated by Algorithm~\ref{alg:basicAPM}, and let $\{\tau_i\}_{i=1}^\infty$ give the subsequence of indices which converge to this limit point.  
    
    We first demonstrate that this point is feasible for \eqref{EQ}.  By Lemma~\ref{lemma:basicAPMDevicePenalty} and Theorem~\ref{thm:basicAPMInfeasibility},
    \begin{equation*}
        \begin{aligned}
            0 \leq &\  \lim_{i \rightarrow \infty} \Gamma_1(p^{(\tau_i)},q^{(\tau_i)},u^{(\tau_i)},\overline{p}^{(\tau_i)},\overline{q}^{(\tau_i)},\overline{u}^{(\tau_i)})\\
            \leq &\  \lim_{i \rightarrow \infty} \frac{R}{\rho_{\tau_i}} + \Gamma_1 (p^{(\tau_i)},q^{(\tau_i)},u^{(\tau_i)},\overline{p}^{(\tau_i-1)},\overline{q}^{(\tau_i-1)},\overline{u}^{(\tau_i-1)})\\
            \leq &\  \lim_{i \rightarrow \infty} \frac{R}{\rho_{\tau_i}} + \sqrt{\frac{C}{2 \rho_{\tau_i}}}= 0.
        \end{aligned}
    \end{equation*}
    As $\Gamma_1$ is continuous, this gives that
    $$\Gamma_1(p^*,q^*,u^*,\overline{p}^*,\overline{q}^*,\overline{u}^*) = \lim_{i\rightarrow \infty} \Gamma_1(p^{(\tau_i)},q^{(\tau_i)},u^{(\tau_i)},\overline{p}^{(\tau_i)},\overline{q}^{(\tau_i)},\overline{u}^{(\tau_i)}) = 0.$$
    This is equivalent to
    $$({p^{\mathrm{tot}*}},{p^{\mathrm{su}*}},{p^{\mathrm{sd}*}},{q^{\mathrm{tot}*}},{q^{\mathrm{res}*}},{u^{\mathrm{on}*}},{u^{\mathrm{sh}*}}) = ({\overline{p}^{\mathrm{tot}*}},{\overline{p}^{\mathrm{su}*}},{\overline{p}^{\mathrm{sd}*}},{\overline{q}^{\mathrm{tot}*}},{\overline{q}^{\mathrm{res}*}},{\overline{u}^{\mathrm{on}*}},{\overline{u}^{\mathrm{sh}*}}),$$
    and thus the limit point satisfies the copy constraint of \eqref{EQ}.  

    Algorithm~\ref{alg:basicAPM} guarantees that the solution $(p^{(\tau_i)}_t,q^{(\tau_i)}_t,u^{(\tau_i)}_t)$ is feasible for $Z^{\mathrm{T}}_t$ for all $t \in \mathcal{T}$, the solution $(\overline{p}^{(\tau_i)}_j,\overline{q}^{(\tau_i)}_j,\overline{u}^{(\tau_i)}_j)$ is feasible for $Z^{\mathrm{J}}_j$ for all $j \in \mathcal{J}^{\mathrm{sd}}$, and $\overline{u}^{\mathrm{sh}(\tau_i)}_{jt}$ is feasible for $Z^{\mathrm{SH}}_{jt}$ for all $j \in \mathcal{J}^{\mathrm{sh}}$ and $t \in \mathcal{T}$.  Under Assumption~\ref{assump:DataAssumptions}, the subproblem feasible regions are closed, and thus the limit $(p^*,q^*,u^*,\overline{p}^*,\overline{q}^*,\overline{u}^*)$ is contained in the appropriate sets.  This demonstrates that the limit point is feasible for \eqref{EQ}.

    Next, we show that the limit point is optimal for \eqref{EQ} with the first variable block $(p,q,u)$ fixed to $(p^*,q^*,u^*)$.  Consider the model \eqref{EQ} with the additional constraint that $(p,q,u) = (p^*,q^*,u^*)$, which can be written as
    \begin{equation}
        \label{proofeq:partialMinUC}
        \begin{aligned}
        \max_{\substack{\overline{p},\overline{q},\overline{u}}} \quad & \sum_{t \in \mathcal{T}} R^{\mathrm{T}}_t(p_t^*,q_t^*) + \sum_{j \in \mathcal{J}^{\mathrm{sd}}} R^{\mathrm{J}}_j(\overline{p}_j,\overline{u}_j)\\
        \text{s.t.} \quad & \begin{aligned}[t]
        & (\overline{p}_j, \overline{q}_j,\overline{u}_j) \in \mathcal{X}^{\mathrm{uc}}_j \quad & \forall j \in \mathcal{J}^{\mathrm{sd}},
        \end{aligned}\\
        & \overline{p}^{\mathrm{tot}} = {p^{\mathrm{tot}*}},\ \overline{p}^{\mathrm{su}} = {p^{\mathrm{su}*}},\ \overline{p}^{\mathrm{sd}} = {p^{\mathrm{sd}*}},\ \overline{q}^{\mathrm{tot}} = {q^{\mathrm{tot}*}},\ \overline{q}^{\mathrm{res}} = {q^{\mathrm{res}*}},\ \overline{u}^{\mathrm{on}} = {u^{\mathrm{on}*}},\ \overline{u}^{\mathrm{sh}} = {u^{\mathrm{sh}*}}.
        \end{aligned}
    \end{equation}
    As above, $(\overline{p}^*,\overline{q}^*,\overline{u}^*)$ is feasible for this problem.  Consider any feasible solution $(\tilde{p},\tilde{q},\tilde{u})$ to \eqref{proofeq:partialMinUC}.

    Theorem~\ref{thm:basicAPMInfeasibility} implies that the maximum difference between the copied variables of $(\overline{p}^{(\tau)},\overline{q}^{(\tau)},\overline{u}^{(\tau)})$ becomes arbitrarily small with increasing $\rho_{\tau}$, i.e., the copied variables of the iterates form a Cauchy sequence.  Thus, the iterates converge to $(\overline{p}^*,\overline{q}^*,\overline{u}^*)$ in the copied variables.  Binary variables admit only unit changes in value, which implies that there is some iteration $\tau'$ after which the copied binary variables $\overline{u}^{\mathrm{on}}$ and $\overline{u}^{\mathrm{sh}}$ never deviate from their values ${\overline{u}^{\mathrm{on}*}}$ and ${\overline{u}^{\mathrm{sh}*}}$ in the limit point.
    By constraints \eqref{constr:UCSUSDDef1}--\eqref{constr:UCSUSDDef2}, and \eqref{constr:UCBinary}, the variables $\overline{u}^{\mathrm{su}}$ and $\overline{u}^{\mathrm{sd}}$ are uniquely defined by $\overline{u}^{\mathrm{on}}$, and therefore, after iteration $\tau'$, all binary variables in solution $\overline{u}^{(\tau)}$ remain at their value in solution~$\overline{u}^*$.  Similarly, as feasibility for \eqref{proofeq:partialMinUC} requires $\tilde{u}^{\mathrm{on}} = u^{\mathrm{on}*}$ and $\tilde{u}^{\mathrm{sh}} = u^{\mathrm{sh}*}$, all binary variables in $\tilde{u}$ have the same value as in $\overline{u}^*$.

    Define by $$\mathcal{Z}^{\mathrm{uc}}(\overline{u}^*) := \{(\overline{p},\overline{q},\overline{u}) \,:\, \overline{u} = \overline{u}^*,\ (\overline{p}_j,\overline{q}_j,\overline{u}_j) \in \mathcal{X}^{\mathrm{uc}}_j\ \forall j \in \mathcal{J}^{\mathrm{sd}}\}.$$
    Clearly, $(\overline{p}^*,\overline{q}^*,\overline{u}^*) \in \mathcal{Z}^{\mathrm{uc}}(\overline{u}^*)$.  Also, by the logic above, $(\tilde{p},\tilde{q},\tilde{u}) \in \mathcal{Z}^{\mathrm{uc}}(\overline{u}^*)$ and  $(\overline{p}^{(\tau)},\overline{q}^{(\tau)},\overline{u}^{(\tau)}) \in \mathcal{Z}^{\mathrm{uc}}(\overline{u}^*)$ for all $\tau \geq \tau'$.  As all binary variables are fixed, $\mathcal{Z}^{\mathrm{uc}}(\overline{u}^*)$ is polyhedral under Assumption~\ref{assump:DataAssumptions}. 

    Consider some subsequence of $\{\tau_i\}_{i=1}^\infty$, denoted by $\{\tau'_i\}_{i=1}^\infty$, such that $\tau'_1 \geq \tau'$.  We apply Lemma~\ref{lemma:shiftedSequence} to polyhedron $\mathcal{Z}^{\mathrm{uc}}(\overline{u}^*)$, solutions $(\overline{p}^*,\overline{q}^*,\overline{u}^*)$ and $(\tilde{p},\tilde{q},\tilde{u})$, and sequence $(\overline{p}^{(\tau'_i)},\overline{q}^{(\tau'_i)},\overline{u}^{(\tau'_i)})$.  Denote by $\{\lambda_i\}_{i=1}^\infty$ a sequence of multipliers that satisfies the statement of Lemma~\ref{lemma:shiftedSequence}.  We then define $$(\tilde{p}^{(\tau'_i)},\tilde{q}^{(\tau'_i)},\tilde{u}^{(\tau'_i)}) = (\overline{p}^{(\tau'_i)},\overline{q}^{(\tau'_i)},\overline{u}^{(\tau'_i)}) + \lambda_i ((\tilde{p},\tilde{q},\tilde{u}) - (\overline{p}^*,\overline{q}^*,\overline{u}^*)).$$
    By application of Lemma~\ref{lemma:shiftedSequence}, the sequence $(\tilde{p}^{(\tau'_i)},\tilde{q}^{(\tau'_i)},\tilde{u}^{(\tau'_i)})$ is feasible for $\mathcal{Z}^{\mathrm{uc}}(\overline{u}^*)$ and thus is feasible for subproblems $Z^{\mathrm{J}}_j$, and has $\lim_{i \rightarrow \infty} (\tilde{p}^{(\tau'_i)},\tilde{q}^{(\tau'_i)},\tilde{u}^{(\tau'_i)}) = (\tilde{p},\tilde{q},\tilde{u})$.

    By feasibility for \eqref{proofeq:partialMinUC}, it holds that solutions $(\tilde{p},\tilde{q},\tilde{u})$ and $(\overline{p}^*,\overline{q}^*,\overline{u}^*)$ have the same values for the copied variables (which appear in $\Gamma_1$), and thus $(\tilde{p}^{(\tau'_i)},\tilde{q}^{(\tau'_i)},\tilde{u}^{(\tau'_i)})$ and $(\overline{p}^{(\tau'_i)},\overline{q}^{(\tau'_i)},\overline{u}^{(\tau'_i)})$ have the same values in those variables for all $i$.  As a result, for all $i \in \mathbb{Z}_{\geq}$ and $j \in \mathcal{J}^{\mathrm{sd}}$ and any solution $(p_j,q_j,u_j)$, $$\Gamma'_1(p_j,q_j,u_j,\overline{p}^{(\tau'_i)}_j,\overline{q}^{(\tau'_i)}_j,\overline{u}^{(\tau'_i)}_j) = \Gamma'_1(p_j,q_j,u_j,\tilde{p}^{(\tau'_i)}_j,\tilde{q}^{(\tau'_i)}_j,\tilde{u}^{(\tau'_i)}_j).$$  To conclude, we observe that
    \begin{equation*}
        \begin{aligned}
            \sum_{j \in \mathcal{J}^{\mathrm{sd}}} R^{\mathrm{J}}_j(\overline{p}_j^*,\overline{u}_j^*) =\ & \lim_{i \rightarrow \infty} \sum_{j \in \mathcal{J}^{\mathrm{sd}}} R^{\mathrm{J}}_j(\overline{p}_j^{(\tau'_i)},\overline{u}_j^{(\tau'_i)})\\
            =\ & \lim_{i \rightarrow \infty} \sum_{j \in \mathcal{J}^{\mathrm{sd}}} R^{\mathrm{J}}_j(\overline{p}_j^{(\tau'_i)},\overline{u}_j^{(\tau'_i)}) - \\
            & \qquad \rho_{\tau'_i} \left (\Gamma'_1(p^{(\tau'_i)}_j,q^{(\tau'_i)}_j,u^{(\tau'_i)}_j,\overline{p}^{(\tau'_i)}_j,\overline{q}^{(\tau'_i)}_j,\overline{u}^{(\tau'_i)}_j) - \Gamma'_1(p^{(\tau'_i)}_j,q^{(\tau'_i)}_j,u^{(\tau'_i)}_j,\tilde{p}^{(\tau'_i)}_j,\tilde{q}^{(\tau'_i)}_j,\tilde{u}^{(\tau'_i)}_j) \right )\\
            \geq\ & \lim_{i \rightarrow \infty} \sum_{j \in \mathcal{J}^{\mathrm{sd}}} R^{\mathrm{J}}_j(\tilde{p}_j^{(\tau'_i)},\tilde{u}_j^{(\tau'_i)}) - \\
            & \qquad \rho_{\tau'_i} \left (\Gamma'_1(p^{(\tau'_i)}_j,q^{(\tau'_i)}_j,u^{(\tau'_i)}_j,\tilde{p}^{(\tau'_i)}_j,\tilde{q}^{(\tau'_i)}_j,\tilde{u}^{(\tau'_i)}_j) - \Gamma'_1(p^{(\tau'_i)}_j,q^{(\tau'_i)}_j,u^{(\tau'_i)}_j,\tilde{p}^{(\tau'_i)}_j,\tilde{q}^{(\tau'_i)}_j,\tilde{u}^{(\tau'_i)}_j) \right )\\
            =\ & \lim_{i \rightarrow \infty} \sum_{j \in \mathcal{J}^{\mathrm{sd}}} R^{\mathrm{J}}_j(\tilde{p}_j^{(\tau'_i)},\tilde{u}_j^{(\tau'_i)})\\
            =\ & \sum_{j \in \mathcal{J}^{\mathrm{sd}}} R^{\mathrm{J}}_j(\tilde{p}_j,\tilde{u}_j),
        \end{aligned}
    \end{equation*}
    where the inequality follows from the global optimality of iterates $(\overline{p}^{(\tau)},\overline{q}^{(\tau)},\overline{u}^{(\tau)})$ for subproblems~$Z^{\mathrm{J}}_j$.  This proves that the solution $(\overline{p}^*,\overline{q}^*,\overline{u}^*)$ is optimal for \eqref{proofeq:partialMinUC}, and thus satisfies the global partial optimality condition for its block.

    Last, we consider the other block of variables and show that the limit point is optimal for \eqref{EQ} with the second variable block $(\overline{p},\overline{q},\overline{u})$ fixed to $(\overline{p}^*,\overline{q}^*,\overline{u}^*)$.  We again analyze the generic form \eqref{eq:genericSubproblem}, which has limit point $(x^*,y^*,z^*,\overline{x}^*,\overline{y}^*,\overline{z}^*)$ that corresponds to $(p^*,q^*,u^*,\overline{p}^*,\overline{q}^*,\overline{u}^*)$.  Problem \eqref{EQ} with the additional constraint $(\overline{p},\overline{q},\overline{u}) = (\overline{p}^*,\overline{q}^*,\overline{u}^*)$ can be written in this generic form as 
    \begin{equation}
        \label{proofeq:partialMinAC}
        \begin{aligned}
        \max_{x,y,z} \quad & \sum_{t \in \mathcal{T}} F_t(x_t,y_t,z_t) + \sum_{j \in \mathcal{J}^{\mathrm{sd}}} R^{\mathrm{J}}_j(\overline{p}_j^*,\overline{u}_j^*)\\
        \text{s.t.} \quad & \begin{aligned}[t]
            & x_t \in \mathcal{X}_t \quad & \forall t \in \mathcal{T},\\
            & y_t = A_{z_t}(x_t) \quad & \forall t \in \mathcal{T},\\
            & z_t \in \mathcal{Z}_t \quad & \forall t \in \mathcal{T},\\
            & E x_t = E \overline{x}^*_t \quad & \forall t \in \mathcal{T},
        \end{aligned}
        \end{aligned}
    \end{equation}
    where the objects $(F, \mathcal{X}, \mathcal{Z})$ for each time period are now indexed by $t$, and $Ex_t$ selects the variables which appear in the copy constraint.  We prove the result by demonstrating that $(x^*,y^*,z^*)$ is optimal for \eqref{proofeq:partialMinAC}.

    Consider any $(\tilde{x},\tilde{y},\tilde{z})$ feasible for \eqref{proofeq:partialMinAC}.  For each $t \in \mathcal{T}$, we apply Lemma~\ref{lemma:shiftedSequence} to polyhedron $\mathcal{X}_t$, solutions $x^*_t$ and $\tilde{x}_t$, and sequence $x^{(\tau_i)}_t$.  Note that $(x^{(\tau_i)},y^{(\tau_i)},z^{(\tau_i)})$ corresponds to the sequence $(p^{(\tau_i)},q^{(\tau_i)},u^{(\tau_i)})$.  Denote by $\{\lambda_{it}\}_{i=1}^\infty$ a sequence of multipliers that satisfies the statement of Lemma~\ref{lemma:shiftedSequence}.  We then define 
    $$\tilde{x}^{(\tau_i)}_t = x^{(\tau_i)}_t + \lambda_{it} ( \tilde{x}_t - x^*_t) \qquad \text{and} \qquad \tilde{y}^{(\tau_i)}_t = A_{\tilde{z}_t} (\tilde{x}^{(\tau_i)}_t).$$
    By Lemma~\ref{lemma:shiftedSequence}, the sequence $\tilde{x}^{(\tau_i)}_t \in \mathcal{X}_t$, so $(\tilde{x}^{(\tau_i)}_t,\tilde{y}^{(\tau_i)}_t,\tilde{z}_t)$ is feasible for \eqref{eq:genericSubproblem} at time $t$ and thus for subproblem $Z^{\mathrm{T}}_t$.  Note that this sequence holds the variables $\tilde{z}_t$ constant.  Lemma~\ref{lemma:shiftedSequence} gives that $\lim_{i \rightarrow \infty} \tilde{x}^{(\tau_i)}_t = \tilde{x}_t$.  As $\tilde{y}_t = A_{\tilde{z}_t}(\tilde{x}_t)$ and $A_{\tilde{z}_t}$ is continuous, $\lim_{i \rightarrow \infty} \tilde{y}^{(\tau_i)}_t = \tilde{y}_t$.

    By feasibility for \eqref{proofeq:partialMinAC}, the solutions $\tilde{x}_t$ and $x^*_t$ have the same values for the copied variables $E x_t$ that appear in penalty function $\Gamma$.  As a result, $\tilde{x}^{(\tau_i)}_t$ and $x^{(\tau_i)}_t$ have the same values in these variables for all $i$. Thus, for all $i \in \mathbb{Z}_{\geq}$ and $t \in \mathcal{T}$ and any solution $\overline{x}_t$, 
    $$\Gamma(x^{(\tau_i)}_t,\overline{x}_t) = \Gamma(\tilde{x}^{(\tau_i)}_t,\overline{x}_t).$$
    To conclude, we have that
    \begin{equation*}
        \begin{aligned}
            \sum_{t \in \mathcal{T}} F_t(x^*_t,y^*_t,z^*_t) & =\  \lim_{i \rightarrow \infty} \sum_{t \in \mathcal{T}} F_t(x^{(\tau_i)}_t,y^{(\tau_i)}_t,z^{(\tau_i)}_t)\\
            & =\ \lim_{i \rightarrow \infty} \sum_{t \in \mathcal{T}} F_t(x^{(\tau_i)}_t,y^{(\tau_i)}_t,z^{(\tau_i)}_t) - \rho_{\tau_i} \left (\Gamma(x^{(\tau_i)}_t,\overline{x}^{(\tau_i-1)}_t) - \Gamma(\tilde{x}^{(\tau_i)}_t,\overline{x}^{(\tau_i-1)}_t) \right ) \\
            & \geq\ \lim_{i \rightarrow \infty} \sum_{t \in \mathcal{T}} F_t(\tilde{x}^{(\tau_i)}_t,\tilde{y}^{(\tau_i)}_t,\tilde{z}_t) - \rho_{\tau_i} \left (\Gamma(\tilde{x}^{(\tau_i)}_t,\overline{x}^{(\tau_i-1)}_t) - \Gamma(\tilde{x}^{(\tau_i)}_t,\overline{x}^{(\tau_i-1)}_t) \right )\\
            & =\ \lim_{i \rightarrow \infty} \sum_{t \in \mathcal{T}} F_t(\tilde{x}^{(\tau_i)}_t,\tilde{y}^{(\tau_i)}_t,\tilde{z}_t)\\
            & =\ \sum_{t \in \mathcal{T}} F_t(\tilde{x}_t,\tilde{y}_t,\tilde{z}_t),
        \end{aligned}
    \end{equation*}
    where the inequality follows from the global optimality of iterates $(x^{(\tau)}_t,y^{(\tau)}_t,z^{(\tau)}_t)$ for subproblems~$Z^{\mathrm{T}}_t$.  This demonstrates the optimality of $(x^*,y^*,z^*)$ for \eqref{proofeq:partialMinAC}, and therefore $(p^*,q^*,u^*)$ satisfies the partial optimality condition for its block.  This demonstrates that $(p^*,q^*,u^*,\overline{p}^*,\overline{q}^*,\overline{u}^*)$ is a partial optimum for \eqref{EQ}.
\end{proof}

\subsubsection*{Proof of Proposition~\ref{prop:rampLPFeas}}

\begin{proof}
    Take any $j \in \mathcal{J}^{\mathrm{sd}}$ and $(\overline{p}_j,\overline{q}_j,\overline{u}_j)$ such that $(\overline{p}_j,\overline{q}_j,\overline{u}_j) \in \mathcal{X}_j^{\mathrm{uc}}$. For all $t \in \mathcal{T}$, construct
    \begin{equation*}
        \begin{aligned}
            \delta^-_{jt} & = \frac{1}{2} \min \left\{r^{\mathrm{d}}_{jt} + (\overline{p}^{\mathrm{tot}}_{jt} - \overline{p}^{\mathrm{tot}}_{j,t-1}),\ r^{\mathrm{u}}_{j,t+1} - (\overline{p}^{\mathrm{tot}}_{j,t+1} - \overline{p}^{\mathrm{tot}}_{jt}) \right\},\\
            \delta^+_{jt} & = \frac{1}{2} \min \left\{r^{\mathrm{u}}_{jt} - (\overline{p}^{\mathrm{tot}}_{jt} - \overline{p}^{\mathrm{tot}}_{j,t-1}),\ r^{\mathrm{d}}_{j,t+1} + (\overline{p}^{\mathrm{tot}}_{j,t+1} - \overline{p}^{\mathrm{tot}}_{jt}) \right\},
        \end{aligned}
    \end{equation*}
    which satisfies \eqref{constr:rampSafeDownHeur}--\eqref{constr:rampSafeUpHeur} and
    \begin{equation*}
        \begin{aligned}
            \delta^-_{jt} + \delta^+_{j,t-1} & \leq \frac{1}{2} \left (r^{\mathrm{d}}_{jt} + (\overline{p}^{\mathrm{tot}}_{jt} - \overline{p}^{\mathrm{tot}}_{j,t-1}) + r^{\mathrm{d}}_{jt} + (\overline{p}^{\mathrm{tot}}_{jt} - \overline{p}^{\mathrm{tot}}_{j,t-1}) \right ) = r^{\mathrm{d}}_{jt} + (\overline{p}^{\mathrm{tot}}_{jt} - \overline{p}^{\mathrm{tot}}_{j,t-1}),\\
            \delta^-_{j,t-1} + \delta^+_{jt} & \leq \frac{1}{2} \left (r^{\mathrm{u}}_{jt} - (\overline{p}^{\mathrm{tot}}_{jt} - \overline{p}^{\mathrm{tot}}_{j,t-1}) + r^{\mathrm{u}}_{jt} - (\overline{p}^{\mathrm{tot}}_{jt} - \overline{p}^{\mathrm{tot}}_{j,t-1}) \right ) = r^{\mathrm{u}}_{jt} - (\overline{p}^{\mathrm{tot}}_{jt} - \overline{p}^{\mathrm{tot}}_{j,t-1}).
        \end{aligned}
    \end{equation*}
    The construction thus satisfies the constraints \eqref{constr:rampSafeDown}--\eqref{constr:rampSafeUp} and is feasible for \eqref{eq:rampSafeLP}.

    Next, we demonstrate nonnegativity and boundedness.  Consider any $(\delta^-_j,\delta^+_j)$ that satisfies \eqref{constr:rampSafeDown}--\eqref{constr:rampSafeUpHeur}. Since $(\overline{p}_j,\overline{q}_j,\overline{u}_j)$ satisfies the ramping constraints \eqref{constr:Ramp}, we have $-r^{\mathrm{d}}_{jt} \leq \overline{p}^{\mathrm{tot}}_{jt} - \overline{p}^{\mathrm{tot}}_{j,t-1} \leq r^{\mathrm{u}}_{jt}$ for all $t \in \mathcal{T}$.  Then, constraints \eqref{constr:rampSafeDownHeur}--\eqref{constr:rampSafeUpHeur} imply nonnegativity. Constraint~\eqref{constr:rampSafeDown} provides an upper bound on~$\delta^-_{jt}$. Combining $\delta^+_{j,t-1} \geq 0$ and \eqref{constr:rampSafeDown} gives that $\delta^-_{jt} \leq r^{\mathrm{d}}_{jt} + (\overline{p}^{\mathrm{tot}}_{jt} - \overline{p}^{\mathrm{tot}}_{j,t-1})$. Similarly, from constraint~\eqref{constr:rampSafeUp}, $\delta^+_{jt} \leq r^{\mathrm{u}}_{jt} - (\overline{p}^{\mathrm{tot}}_{jt} - \overline{p}^{\mathrm{tot}}_{j,t-1})$. Thus, the set of feasible ($\delta^-_{jt},\delta^+_{jt}$) is bounded.
\end{proof}

\subsubsection*{Proof of Proposition~\ref{prop:rampLPValid}}

\begin{proof}
    Consider any $(\overline{p},\overline{u},\delta)$ that satisfies \eqref{constr:rampSafeDown}--\eqref{constr:rampSafeUp} for all $j \in \mathcal{J}^{\mathrm{sd}}$.  Take any $p$ such that $p^{\mathrm{tot}}_{jt}$ satisfies \eqref{eq:rampSafeConstraint}, i.e.,
    $$\overline{p}^{\mathrm{tot}}_{jt} - \delta^-_{jt} \leq p^{\mathrm{tot}}_{jt} \leq \overline{p}^{\mathrm{tot}}_{jt} + \delta^+_{jt} \quad \forall j \in \mathcal{J}^{\mathrm{sd}},\ t \in \mathcal{T}.$$
    Constraints \eqref{eq:rampSafeConstraint}, \eqref{constr:rampSafeDown}, and definition \eqref{constr:rampSafeDownRangeDef} yield
    $$p^{\mathrm{tot}}_{jt} - p^{\mathrm{tot}}_{j,t-1} \geq \overline{p}^{\mathrm{tot}}_{jt} - \overline{p}^{\mathrm{tot}}_{j,t-1} - (\delta^-_{jt} + \delta^+_{j,t-1}) \geq -r^{\mathrm{d}}_{jt} = -d_t \left ( p^{\mathrm{rd}}_{j} \overline{u}^{\mathrm{on}}_{jt} + p^{\mathrm{rd,sd}}_j (1 - \overline{u}^{\mathrm{on}}_{jt}) \right ),$$
    and thus $(p,\overline{u})$ satisfies \eqref{constr:RampDown}.
    Similarly, constraints \eqref{eq:rampSafeConstraint}, \eqref{constr:rampSafeUp}, and definition \eqref{constr:rampSafeUpRangeDef} yield
    $$p^{\mathrm{tot}}_{jt} - p^{\mathrm{tot}}_{j,t-1} \leq \overline{p}^{\mathrm{tot}}_{jt} - \overline{p}^{\mathrm{tot}}_{j,t-1} + \delta^-_{j,t-1} + \delta^+_{jt} \leq r^{\mathrm{u}}_{jt} = d_t \left ( p^{\mathrm{ru}}_j (\overline{u}^{\mathrm{on}}_{jt} - \overline{u}^{\mathrm{su}}_{jt}) + p^{\mathrm{ru,su}}_j (\overline{u}^{\mathrm{su}}_{jt} - \overline{u}^{\mathrm{on}}_{jt} + 1) \right ),$$
    and thus $(p,\overline{u})$ satisfies \eqref{constr:RampUp}.
\end{proof}

\subsubsection*{Proof of Proposition~\ref{prop:energyViolationRestriction}}

\begin{proof}
The result relies on the inequality $\left [ \sum_{i} x_i \right ]^+ \leq \sum_i [x_i]^+$ for any $x$.  Then,
\begin{equation*}
    \begin{aligned}
        & \sum_{t \in \mathcal{T}} \sum_{W \in \mathcal{W}^{\mathrm{en,min}}_{jt}} d_t \left [\frac{e^{\mathrm{min}}_j(W) - \sum_{t' \in W} d_{t'} \overline{p}^{\mathrm{tot}}_{jt'}}{\sum_{t' \in W} d_{t'}} + \overline{p}^{\mathrm{tot}}_{jt} - p^{\mathrm{tot}}_{jt} \right ]^+\\
        = & \sum_{W \in \mathcal{W}^{\mathrm{en,min}}_{j}} \sum_{t \in W} d_t \left [\frac{e^{\mathrm{min}}_j(W) - \sum_{t' \in W} d_{t'} \overline{p}^{\mathrm{tot}}_{jt'}}{\sum_{t' \in W} d_{t'}} + \overline{p}^{\mathrm{tot}}_{jt} - p^{\mathrm{tot}}_{jt} \right ]^+\\
        \geq & \sum_{W \in \mathcal{W}^{\mathrm{en,min}}_{j}} \left [\sum_{t \in W} d_t \left ( \frac{e^{\mathrm{min}}_j(W) - \sum_{t' \in W} d_{t'} \overline{p}^{\mathrm{tot}}_{jt'}}{\sum_{t' \in W} d_{t'}} + \overline{p}^{\mathrm{tot}}_{jt} - p^{\mathrm{tot}}_{jt} \right ) \right ]^+\\
        = & \sum_{W \in \mathcal{W}^{\mathrm{en,min}}_{j}} \left [e^{\mathrm{min}}_j(W) - \sum_{t \in W} d_t p^{\mathrm{tot}}_{jt} \right ]^+.
    \end{aligned}
\end{equation*}
The first relation follows from the definition of $\mathcal{W}^{\mathrm{en,min}}_{jt}$ and the second from the aforementioned property of $[\cdot]^+$ and nonnegativity of $d_t$.  Similarly, for the energy maximum terms,
\begin{equation*}
    \begin{aligned}
        & \sum_{t \in \mathcal{T}} \sum_{W \in \mathcal{W}^{\mathrm{en,max}}_{jt}} d_t \left [\frac{-e^{\mathrm{max}}_j(W) + \sum_{t' \in W} d_{t'} \overline{p}^{\mathrm{tot}}_{jt'}}{\sum_{t' \in W} d_{t'}} + p^{\mathrm{tot}}_{jt} - \overline{p}^{\mathrm{tot}}_{jt} \right ]^+\\
        = & \sum_{W \in \mathcal{W}^{\mathrm{en,max}}_{j}} \sum_{t \in W} d_t \left [\frac{-e^{\mathrm{max}}_j(W) + \sum_{t' \in W} d_{t'} \overline{p}^{\mathrm{tot}}_{jt'}}{\sum_{t' \in W} d_{t'}} + p^{\mathrm{tot}}_{jt} - \overline{p}^{\mathrm{tot}}_{jt} \right ]^+\\
        \geq & \sum_{W \in \mathcal{W}^{\mathrm{en,max}}_{j}} \left [\sum_{t \in W} d_t \left ( \frac{-e^{\mathrm{max}}_j(W) + \sum_{t' \in W} d_{t'} \overline{p}^{\mathrm{tot}}_{jt'}}{\sum_{t' \in W} d_{t'}} + p^{\mathrm{tot}}_{jt} - \overline{p}^{\mathrm{tot}}_{jt} \right ) \right ]^+\\
        = & \sum_{W \in \mathcal{W}^{\mathrm{en,max}}_{j}} \left [-e^{\mathrm{max}}_j(W) + \sum_{t \in W} d_t p^{\mathrm{tot}}_{jt} \right ]^+.
    \end{aligned}
\end{equation*}
Together, as $c^{\mathrm{e}} \geq 0$, these properties yield the bound $C^{\mathrm{e}}_j(p) \leq \sum_{t \in \mathcal{T}} \overline{C}^{\mathrm{e}}_{jt}(p;\overline{p})$.
\end{proof}

\subsubsection*{Proof of Proposition~\ref{prop:SOCRelaxation}}

\begin{proof}
    Consider $t \in \mathcal{T}$ and $(p_t,q_t,u^{\mathrm{sh}}_t) \in \mathcal{X}^{\mathrm{ac}}_t$ with $u^{\mathrm{sh}}_{jt} \in \mathcal{Y}^{\mathrm{sh}}_{jt}$ for all $j \in \mathcal{J}^{\mathrm{sh}}$.  Then, there is some solution $(v_t,\Delta_t,\theta_t,\tau_t,\phi_t)$ such that this solution and $(p_t,q_t,u^{\mathrm{sh}}_t)$ are feasible for \eqref{constr:ACAngleDiff}--\eqref{constr:DCSets}.  Construct the following objects:
        \begin{equation*}
        \begin{gathered}
            c_{jt} = \frac{v_{i_j t} v_{i'_j t}}{\tau_{jt}} \cos (\Delta_{jt}),\ 
            s_{jt} = \frac{v_{i_j t} v_{i'_j t}}{\tau_{jt}} \sin (\Delta_{jt}),\ 
            \mu_{jt} = \frac{v_{i_j t}^2}{\tau_{jt}^2} \quad \forall j \in \mathcal{J}^{\mathrm{ac}},\\
            \omega_{it} = v_{it}^2 \quad \forall i \in \mathcal{I}, \quad
            \mu^{\mathrm{sh}}_{jt} = u^{\mathrm{sh}}_{jt} v_{i_jt}^2 \quad \forall j \in \mathcal{J}^{\mathrm{sh}}.
        \end{gathered}
    \end{equation*}
    We consider the feasibility of the solution $(c_t,s_t,p_t,q_t,\mu_t,\omega_t)$ for constraints \eqref{constr:SOC}.  First, by \eqref{constr:ACpfr}--\eqref{constr:ACqto} we have for all $j \in \mathcal{J}^{\mathrm{ac}}$ that
    \begin{equation*}
        \begin{aligned}
            g_{i_j j} \mu_{jt} - g_j c_{jt} - b_j s_{jt} & = \frac{ g_{i_jj} v_{i_jt}^2 }{\tau_{jt}^2} + \frac{v_{i_jt} v_{i'_jt}}{\tau_{jt}} \left ( -g_j \cos (\Delta_{jt}) - b_j \sin(\Delta_{jt}) \right ) = p^{\mathrm{fr}}_{jt},\\
            g_{i'_j j} \omega_{i'_jt} - g_j c_{jt} + b_j s_{jt} & = g_{i'_jj} v_{i'_jt}^2 + \frac{v_{i_jt} v_{i'_jt}}{\tau_{jt}} \left ( -g_j \cos (\Delta_{jt}) + b_j \sin(\Delta_{jt}) \right )= p^{\mathrm{to}}_{jt},\\
            -b_{i_j j} \mu_{jt} + b_j c_{jt} - g_j s_{jt} & = \frac{ -b_{i_jj} v_{i_jt}^2}{\tau_{jt}^2} + \frac{v_{i_jt} v_{i'_jt}}{\tau_{jt}} \left ( b_j \cos (\Delta_{jt}) - g_j \sin(\Delta_{jt}) \right ) = q^{\mathrm{fr}}_{jt},\\
            -b_{i'_j j} \omega_{i'_j t} + b_j c_{jt} + g_j s_{jt} & = -b_{i'_jj} v_{i'_jt}^2 + \frac{v_{i_jt} v_{i'_jt}}{\tau_{jt}} \left ( b_j \cos (\Delta_{jt}) + g_j \sin(\Delta_{jt}) \right ) = q^{\mathrm{to}}_{jt}.
        \end{aligned}
    \end{equation*}
    Next, using \eqref{constr:ShuntReal}--\eqref{constr:ShuntReactive}, we have for all $j \in \mathcal{J}^{\mathrm{sh}}$ that 
    $$g^{\mathrm{sh}}_j \mu^{\mathrm{sh}}_{jt} = g^{\mathrm{sh}}_j u^{\mathrm{sh}}_{jt} v_{i_j t}^2 = p^{\mathrm{sh}}_{jt} \quad \text{and} \quad -b^{\mathrm{sh}}_j \mu^{\mathrm{sh}}_{jt} = -b^{\mathrm{sh}}_j u^{\mathrm{sh}}_{jt} v_{i_j t}^2 = q^{\mathrm{sh}}_{jt}.$$
    This establishes that the solution satisfies constraints \eqref{constr:SOCpfr}--\eqref{constr:SOCShuntReactive}.

    We now consider the remaining constraints.  It holds for all $j \in \mathcal{J}^{\mathrm{ac}}$ that
    $$c_{jt}^2 + s_{jt}^2 = \frac{v_{i_j t}^2 v_{i'_j t}^2}{\tau_{jt}^2} \left ( \cos^2 (\Delta_{jt}) + \sin^2 (\Delta_{jt})  \right ) = \frac{v_{i_j t}^2 v_{i'_j t}^2}{\tau_{jt}^2} = \mu_{jt} \omega_{i'_j t},$$
    and \eqref{constr:SOCCosSin} holds.  By constraint \eqref{constr:ACSets}, nonnegativity of $\tau^{\mathrm{min}}$ and $\tau^{\mathrm{max}}$, and $\omega_{it} = v_{it}^2 \geq 0$,
    $$\frac{\omega_{i_j t}}{(\tau^{\mathrm{max}}_j)^2} \leq \frac{\omega_{i_j t}}{\tau_{jt}^2} = \mu_{jt} = \frac{\omega_{i_j t}}{\tau_{jt}^2} \leq \frac{\omega_{i_j t}}{(\tau^{\mathrm{min}}_j)^2},$$
    and \eqref{constr:SOCTapRatio} holds.  By definition of $\mathcal{Y}^{\mathrm{sh}}_{jt}$, for all $j \in \mathcal{J}^{\mathrm{sh}}$ we have $u^{\mathrm{sh,min}}_j \leq u^{\mathrm{sh}}_{jt} \leq u^{\mathrm{sh,max}}_j$. Thus,
    $$u^{\mathrm{sh,min}}_j \omega_{i_j t} \leq u^{\mathrm{sh}}_{jt} \omega_{i_j t} = u^{\mathrm{sh}}_{jt} v_{i_j t}^2 = \mu^{\mathrm{sh}}_{jt} = u^{\mathrm{sh}}_{jt} v_{i_j t}^2 = u^{\mathrm{sh}}_{jt} \omega_{i_j t} \leq u^{\mathrm{sh,max}}_j \omega_{i_j t},$$
    again using $\omega \geq 0$, and \eqref{constr:SOCShuntDomain} is satisfied.  For all $i \in \mathcal{I}$, due to \eqref{constr:BusSets} and $(v^{\mathrm{min}}, v^{\mathrm{max}}) \geq 0$,
    $$(v^{\mathrm{min}}_i)^2 \leq v_{it}^2 = \omega_{it} = v_{it}^2 \leq (v^{\mathrm{max}}_i)^2,$$
    and \eqref{constr:SOCBusSets} holds.  The solution trivially satisfies \eqref{constr:SOCDC}.  Therefore, $(p_t,q_t,\mu_t,\omega_t) \in \mathcal{X}^{\mathrm{soc}}_t$.  By construction, $\frac{\mu^{\mathrm{sh}}_{jt}}{\omega_{i_j t}} = u^{\mathrm{sh}}_{jt}$ for all $j \in \mathcal{J}^{\mathrm{sh}}$ and the result holds.
\end{proof}

\subsubsection*{Proof of Proposition~\ref{prop:AltPIEquivalent}}

\begin{proof}
    Fix some $j \in \mathcal{J}^{\mathrm{sd}}$ and $t \in \mathcal{T}$.  First, consider a solution $(p_{jt},q_{jt},u^{\mathrm{on}}_{jt}) \in \mathcal{Y}^{\mathrm{uc}}_{jt}$.  Then, let $(u^{\mathrm{su}}_j,u^{\mathrm{sd}}_j) \geq 0$ certify inclusion in $\mathcal{Y}^{\mathrm{uc}}_{jt}$.  Define 
    $$\chi^{\mathrm{su}}_{jt} = \sum_{t' \in \mathcal{T}^{\mathrm{supc}}_{jt}} u^{\mathrm{su}}_{jt'} \quad \text{and} \quad \chi^{\mathrm{sd}}_{jt} = \sum_{t' \in \mathcal{T}^{\mathrm{sdpc}}_{jt}} u^{\mathrm{sd}}_{jt'}.$$
    By \eqref{eq:SUSDRelaxation}, we have $u^{\mathrm{on}}_{jt} + \chi^{\mathrm{su}}_{jt} \leq 1$ and $u^{\mathrm{on}}_{jt} + \chi^{\mathrm{sd}}_{jt} \leq 1$.  Using these properties and $u^{\mathrm{on}}_{jt} \geq 0$, we have $\chi^{\mathrm{su}}_{jt} \leq 1$ and $\chi^{\mathrm{sd}}_{jt} \leq 1$.  Further, we have $u^{\mathrm{su}}_{jt'} \geq 0$, so $\chi^{\mathrm{su}}_{jt} \geq 0$.  Similarly, $\chi^{\mathrm{sd}}_{jt} \geq 0$. Thus, the solution $(p_{jt},q_{jt},u^{\mathrm{on}}_{jt},\chi_{jt})$ satisfies \eqref{constr:AltPISUMax}--\eqref{constr:AltPIDomain}.  By constraint \eqref{constr:RelaxPISUDef} and nonnegativity of $u^{\mathrm{su}}_{jt'}$ and  $p^{\mathrm{supc}}_{jtt'}$,
    $$0 \leq p^{\mathrm{su}}_{jt} \leq \sum_{t' \in \mathcal{T}^{\mathrm{supc}}_{jt}} p^{\mathrm{supc}}_{jtt'} u^{\mathrm{su}}_{jt'} \leq \max_{t' \in \mathcal{T}^{\mathrm{supc}}_{jt}} p^{\mathrm{supc}}_{jtt'} \sum_{t' \in \mathcal{T}^{\mathrm{supc}}_{jt}} u^{\mathrm{su}}_{jt'} = \chi^{\mathrm{su}}_{jt} \max_{t' \in \mathcal{T}^{\mathrm{supc}}_{jt}} p^{\mathrm{supc}}_{jtt'},$$
    and thus the solution satisfies \eqref{constr:AltPISUDef}. Similarly, constraint \eqref{constr:RelaxPISDDef} implies \eqref{constr:AltPISDDef}. Next, let $a = \uparrow$ if $j \in \mathcal{J}^{\mathrm{pr}}$ and otherwise $a = \downarrow$.  By constraint \eqref{constr:PIReactReserveMax}, we have
    $$q^{\mathrm{tot}}_{jt} \leq q^{\mathrm{max}}_{jt} \left ( u^{\mathrm{on}}_{jt} + \sum_{t' \in \mathcal{T}^{\mathrm{supc}}_{jt}} u^{\mathrm{su}}_{jt'} + \sum_{t' \in \mathcal{T}^{\mathrm{sdpc}}_{jt}} u^{\mathrm{sd}}_{jt'} \right ) + \beta^{\mathrm{max}}_j p^{\mathrm{tot}}_{jt} - q^{\mathrm{res}}_{jta} = q^{\mathrm{max}}_{jt} \left ( u^{\mathrm{on}}_{jt} + \chi^{\mathrm{su}}_{jt} + \chi^{\mathrm{sd}}_{jt} \right ) + \beta^{\mathrm{max}}_j p^{\mathrm{tot}}_{jt} - q^{\mathrm{res}}_{jta},$$
    and \eqref{constr:AltPIReactReserveMax} is satisfied.  Similarly, by \eqref{constr:PIReactReserveMin}, we see that \eqref{constr:AltPIReactReserveMin} holds.  Therefore, $(p_{jt},q_{jt},u^{\mathrm{on}}_{jt},\chi_{jt})$ satisfies constraints \eqref{constr:AltPI}.  As constraints \eqref{constr:PIRealDef}, \eqref{constr:PIDeviceReserveSet}, \eqref{constr:PINonneg}, and \eqref{constr:PIReactNonneg} do not contain $u^{\mathrm{su}}_{jt}$ and $u^{\mathrm{sd}}_{jt}$, the solution also satisfies these constraints, and $(p_{jt},q_{jt},u^{\mathrm{on}}_{jt},\chi_{jt}) \in \overline{\mathcal{Y}}^{\mathrm{uc}}_{jt}.$

    In the other direction, take $(p_{jt},q_{jt},u^{\mathrm{on}}_{jt})$ such that $(p_{jt},q_{jt},u^{\mathrm{on}}_{jt},\chi_{jt}) \in \overline{\mathcal{Y}}^{\mathrm{uc}}_{jt}$ for some $\chi_{jt}$.  
    Let
    $$\overline{t}^{\mathrm{su}} \in \argmax_{t' \in \mathcal{T}^{\mathrm{supc}}_{jt}} \ p^{\mathrm{supc}}_{jtt'} \quad \text{and} \quad \overline{t}^{\mathrm{sd}} \in \argmax_{t' \in \mathcal{T}^{\mathrm{sdpc}}_{jt}} \ p^{\mathrm{sdpc}}_{jtt'}.$$
    We construct the solution $(u^{\mathrm{su}}_j,u^{\mathrm{sd}}_j)$ for all $t' \in \mathcal{T}$ by the following rule:
    \begin{equation*}
        \begin{aligned}
            u^{\mathrm{su}}_{jt'} = \begin{cases}
                \chi^{\mathrm{su}}_{jt} &\quad \text{if } t' = \overline{t}^{\mathrm{su}}\\
                0 & \quad \text{otherwise}
            \end{cases} 
            \qquad
            \mathrm{and}
            \qquad
            u^{\mathrm{sd}}_{jt'} = \begin{cases}
                \chi^{\mathrm{sd}}_{jt} & \quad \text{if } t' = \overline{t}^{\mathrm{sd}}\\
                0 & \quad \text{otherwise.}
            \end{cases}
        \end{aligned}
    \end{equation*}
    Consider the solution $(p_{jt},q_{jt},u^{\mathrm{on}}_{jt},u^{\mathrm{su}}_j,u^{\mathrm{sd}}_j)$.
    By construction, we have $\sum_{t' \in \mathcal{T}^{\mathrm{supc}}_{jt}} u^{\mathrm{su}}_{jt'} = \chi^{\mathrm{su}}_{jt}$ and \linebreak
    $\sum_{t' \in \mathcal{T}^{\mathrm{sdpc}}_{jt}} u^{\mathrm{sd}}_{jt'} = \chi^{\mathrm{sd}}_{jt}$.  Further, $(u^{\mathrm{on}}_{jt},u^{\mathrm{su}}_j,u^{\mathrm{sd}}_j) \in [0,1]^{2T+1}$ by constraint~\eqref{constr:AltPIDomain}.  Let $a = \uparrow$ if $j \in \mathcal{J}^{\mathrm{pr}}$ and otherwise $a = \downarrow$.  Then, by constraint \eqref{constr:AltPIReactReserveMax} we have
    $$q^{\mathrm{tot}}_{jt} \leq q^{\mathrm{max}}_{jt} \left ( u^{\mathrm{on}}_{jt} + \chi^{\mathrm{su}}_{jt} + \chi^{\mathrm{sd}}_{jt} \right ) + \beta^{\mathrm{max}}_j p^{\mathrm{tot}}_{jt} - q^{\mathrm{res}}_{jta} = q^{\mathrm{max}}_{jt} \left ( u^{\mathrm{on}}_{jt} + \sum_{t' \in \mathcal{T}^{\mathrm{supc}}_{jt}} u^{\mathrm{su}}_{jt'} + \sum_{t' \in \mathcal{T}^{\mathrm{sdpc}}_{jt}} u^{\mathrm{sd}}_{jt'} \right ) + \beta^{\mathrm{max}}_j p^{\mathrm{tot}}_{jt} - q^{\mathrm{res}}_{jta},$$
    and \eqref{constr:PIReactReserveMax} is satisfied.  Similarly, \eqref{constr:AltPIReactReserveMin} implies \eqref{constr:PIReactReserveMin}. By constraint \eqref{constr:AltPISUMax},
    $$u^{\mathrm{on}}_{jt} + \sum_{t' \in \mathcal{T}^{\mathrm{supc}}_{jt}} u^{\mathrm{su}}_{jt'} = u^{\mathrm{on}}_{jt} + \chi^{\mathrm{su}}_{jt} \leq 1,$$
    and similarly by \eqref{constr:AltPISDMax}, $u^{\mathrm{on}}_{jt} + \sum_{t' \in \mathcal{T}^{\mathrm{sdpc}}_{jt}} u^{\mathrm{sd}}_{jt'} \leq 1$, so constraints \eqref{eq:SUSDRelaxation} are satisfied.  By definition of $\overline{\mathcal{Y}}^{\mathrm{uc}}_{jt}$, the solution satisfies \eqref{constr:PIRealDef}, \eqref{constr:PIDeviceReserveSet}, \eqref{constr:PINonneg}, and \eqref{constr:PIReactNonneg}.
    Next, by \eqref{constr:AltPISUDef} and construction of $u^{\mathrm{su}}_{jt'}$, we have $$0 \leq p^{\mathrm{su}}_{jt} \leq  \chi^{\mathrm{su}}_{jt} \max_{t' \in \mathcal{T}^{\mathrm{supc}}_{jt}} p^{\mathrm{supc}}_{jtt'} = u^{\mathrm{su}}_{j \overline{t}^{\mathrm{su}}}  p^{\mathrm{supc}}_{jt\overline{t}^{\mathrm{su}}} = \sum_{t' \in \mathcal{T}^{\mathrm{supc}}_{jt}} p^{\mathrm{supc}}_{jtt'} u^{\mathrm{su}}_{jt'},$$
    and constraint \eqref{constr:RelaxPISUDef} is satisfied.  Similarly, \eqref{constr:AltPISDDef} and the construction of $u^{\mathrm{sd}}_{jt'}$ imply \eqref{constr:RelaxPISDDef}.  This establishes, with certificate $(u^{\mathrm{su}}_j,u^{\mathrm{sd}}_j)$, that $(p_{jt},q_{jt},u^{\mathrm{on}}_{jt}) \in \mathcal{Y}^{\mathrm{uc}}_{jt}$.
\end{proof}

\subsubsection*{Proof of Theorem~\ref{thm:tailoredAlgorithmFeasible}}

\begin{proof}
We first demonstrate that the subproblems $\overline{Z}^{\mathrm{T}}_t$, $\overline{Z}^{\mathrm{I}}_i$, and $Z^{\mathrm{SH}}_{jt}$ are feasible at every iteration.  Under Assumption~\ref{assump:DataAssumptions}, there is some feasible solution $(p,q,u,v,\Delta,\theta,\tau,\phi)$ for \eqref{SC-ACOPF}.  By definition, $(p_j,q_j,u_j) \in \mathcal{X}^{\mathrm{uc}}_j$ for all $j \in \mathcal{J}^{\mathrm{sd}}$, $(p_t,q_t,u^{\mathrm{sh}}_t) \in \mathcal{X}^{\mathrm{ac}}_t$ for all $t \in \mathcal{T}$, and $u^{\mathrm{sh}}_{jt} \in \mathcal{X}^{\mathrm{sh}}_{jt}$ for all $j \in \mathcal{J}^{\mathrm{sh}}$ and $t \in \mathcal{T}$.  By Lemma~\ref{lemma:setRelaxations}, $(p_{jt},q_{jt},u^{\mathrm{on}}_{jt}) \in \mathcal{Y}^{\mathrm{uc}}_{jt}$ for all $j \in \mathcal{J}^{\mathrm{sd}}$ and $t \in \mathcal{T}$, and $u^{\mathrm{sh}}_{jt} \in \mathcal{Y}^{\mathrm{sh}}_{jt}$ for all $j \in \mathcal{J}^{\mathrm{sh}}$ and $t \in \mathcal{T}.$  By Proposition~\ref{prop:SOCRelaxation}, there are some $(\mu_t,\omega_t)$ such that $(p_t,q_t,\mu_t,\omega_t) \in \mathcal{X}^{\mathrm{soc}}_t$ for all $t \in \mathcal{T}$. By Proposition~\ref{prop:AltPIEquivalent}, there is some $\chi$ such that  $(p_{jt},q_{jt},u^{\mathrm{on}}_{jt},\chi_{jt}) \in \overline{\mathcal{Y}}^{\mathrm{uc}}_{jt}$ for all $j \in \mathcal{J}^{\mathrm{sd}}$ and $t \in \mathcal{T}$.  Therefore, the subproblems $\overline{Z}^{\mathrm{T}}_t$, $\overline{Z}^{\mathrm{I}}_i$, and $Z^{\mathrm{SH}}_{jt}$ are always feasible.

Next, we demonstrate that the subproblems $H_j$ and $Z^{\mathrm{F}}_t$ are feasible at steps 10, 11, and 13 of the algorithm.  The iterate $(\overline{p}^{(\overline{\tau})},\overline{q}^{(\overline{\tau})},\overline{u}^{(\overline{\tau})})$ is feasible for the bus-level subproblems $\overline{Z}^{\mathrm{I}}_i$, so $(\overline{p}^{(\overline{\tau})}_j,\overline{q}^{(\overline{\tau})}_j,\overline{u}^{(\overline{\tau})}_j) \in \mathcal{X}^{\mathrm{uc}}_j$ for all $j \in \mathcal{J}^{\mathrm{sd}}$.  Then, Proposition~\ref{prop:rampLPFeas} gives that problems $H_j(\overline{p}^{(\overline{\tau})}_j,\overline{u}^{(\overline{\tau})}_j)$ are feasible for all $j \in \mathcal{J}^{\mathrm{sd}}$ and the solutions $\delta$ are nonnegative.  By Propositions~\ref{prop:EQequivalence}~and~\ref{prop:DLFeasible}, there is some feasible solution $(p,q,u,v,\Delta,\theta,\tau,\phi)$ for \eqref{SC-ACOPF} with $u = \overline{u}^{(\overline{\tau})}$and $p^{\mathrm{tot}} = {\overline{p}^{\mathrm{tot}(\overline{\tau})}}$.  Note that $u^{\mathrm{on}}$ and $p^{\mathrm{tot}}$ are copied variables, and $u^{\mathrm{su}}$ and $u^{\mathrm{sd}}$ are defined implicitly by $u^{\mathrm{on}}$ from constraints \eqref{constr:UCSUSDDef1}--\eqref{constr:UCSUSDDef2} and \eqref{constr:UCBinary}.  This solution satisfies constraints \eqref{constr:PIReal}--\eqref{constr:PIReactive} and \eqref{constr:PowerFlow}, and thus $(p_t,q_t,u^{\mathrm{sh}}_t) \in \mathcal{X}^{\mathrm{ac}}_t$ for all $t \in \mathcal{T}$.  Further, as $\delta$ is nonnegative, the solution is trivially feasible for \eqref{eq:rampSafeConstraint}.  Therefore, this solution is feasible for problems $Z^{\mathrm{F}}_t(\overline{p}^{(\overline{\tau})},\overline{q}^{(\overline{\tau})},\overline{u}^{(\overline{\tau})},\emptyset;\delta)$ and $Z^{\mathrm{F}}_t(\overline{p}^{(\overline{\tau})},\overline{q}^{(\overline{\tau})},\overline{u}^{(\overline{\tau})},\overline{\mathcal{N}};\delta)$.  This establishes the validity of the algorithm.

Finally, consider the output $(p,q,u)$.  This solution satisfies constraints \eqref{constr:PIReal}--\eqref{constr:PIReactive}.  As $u = \overline{u}^{(\overline{\tau})}$ and $(\overline{p}^{(\overline{\tau})},\overline{q}^{(\overline{\tau})},\overline{u}^{(\overline{\tau})}) \in \mathcal{X}^{\mathrm{uc}}_{j}$ for all $j \in \mathcal{J}^{\mathrm{sd}}$, it holds that $u$ satisfies constraints \eqref{constr:UC}.  Additionally, $(p_t,q_t,u^{\mathrm{sh}}_t) \in \mathcal{X}^{\mathrm{ac}}_t$ for all $t \in \mathcal{T}$, so there is some $(v,\Delta,\theta,\tau,\phi)$ such that the solution satisfies \eqref{constr:ACAngleDiff}--\eqref{constr:DCSets}.  As $u = \overline{u}^{(\overline{\tau})}$ and $\overline{u}^{\mathrm{sh}(\overline{\tau})}_{jt}$ is an integer on $[u^{\mathrm{sh,min}}_j,u^{\mathrm{sh,max}}_j]$ by definition of the subproblems $Z^{\mathrm{SH}}_{jt}$, the solution satisfies \eqref{constr:ShuntSets}.  By Proposition~\ref{prop:rampLPValid}, as $(p,u)$ satisfies constraint \eqref{eq:rampSafeConstraint} and $\delta$ is feasible for $H_j(\overline{p}^{(\overline{\tau})},\overline{u}^{(\overline{\tau})}) = H_j(\overline{p}^{(\overline{\tau})},u)$, the solution is feasible for constraint \eqref{constr:Ramp}.  Therefore, the solution $(p,q,u,v,\Delta,\theta,\tau,\phi)$ is feasible for \eqref{SC-ACOPF}.
\end{proof}

\subsubsection*{Proof of Theorem~\ref{thm:SOCUpperBound}}

\begin{proof}
  Consider any feasible solution $(p,q,u,v,\Delta,\theta,\tau,\phi)$ for \eqref{SC-ACOPF}.  By definition, $(p_j,q_j,u_j) \in \mathcal{X}^{\mathrm{uc}}_j$ for all $j \in \mathcal{J}^{\mathrm{sd}}$, $(p_t,q_t,u^{\mathrm{sh}}_t) \in \mathcal{X}^{\mathrm{ac}}_t$ for all $t \in \mathcal{T}$, and $u^{\mathrm{sh}}_{jt} \in \mathcal{X}^{\mathrm{sh}}_{jt}$ for all $j \in \mathcal{J}^{\mathrm{sh}}$ and $t \in \mathcal{T}$. By Lemma~\ref{lemma:setRelaxations}, $(p_{jt},q_{jt},u^{\mathrm{on}}_{jt}) \in \mathcal{Y}^{\mathrm{uc}}_{jt}$ for all $j \in \mathcal{J}^{\mathrm{sd}}$ and $t \in \mathcal{T}$, and $u^{\mathrm{sh}}_{jt} \in \mathcal{Y}^{\mathrm{sh}}_{jt}$ for all $j \in \mathcal{J}^{\mathrm{sh}}$ and $t \in \mathcal{T}.$  Further, by Proposition~\ref{prop:SOCRelaxation}, there are some $(\mu_t,\omega_t)$ such that $(p_t,q_t,\mu_t,\omega_t) \in \mathcal{X}^{\mathrm{soc}}_t$ for all $t \in \mathcal{T}$.  Thus, we have a corresponding feasible solution $(p_t,q_t,u_t,\mu_t,\omega_t)$ for each problem of \eqref{eq:SOCUB}.  

  We now establish fundamental properties of the objective functions.  For any $p$, as $c^{\mathrm{e}} \geq 0$, it holds that $C^{\mathrm{e}}_j(p) \geq 0$ for all $j \in \mathcal{J}^{\mathrm{sd}}$.  Similarly, for any $(p,q)$, as $c^{\mathrm{s}} \geq 0$ and $d_t \geq 0$, it holds that $C^{\mathrm{ctg}}_{kt} (p,q) \geq 0$ for all $k \in \mathcal{K}$ and $t \in \mathcal{T}$, and therefore $C^{\mathrm{ctg}}_t(p,q) \geq 0$ for all $t \in \mathcal{T}$.  Finally, for $u$ satisfying \eqref{constr:UC}, we have for all $j \in \mathcal{J}^{\mathrm{sd}}$ that
  \begin{equation*}
      \begin{aligned}
           C^{\mathrm{uc}}_j(u) - \sum_{t \in \mathcal{T}} c^{\mathrm{on}}_{jt} u^{\mathrm{on}}_{jt} & = \sum_{t \in \mathcal{T}} \left ( c^{\mathrm{su}}_j u^{\mathrm{su}}_{jt} + c^{\mathrm{sd}}_{j} u^{\mathrm{sd}}_{jt} - \sum_{\substack{t' \in \mathcal{T}\\ t' < t}} c^{\mathrm{dd}}_{jtt'} [u^{\mathrm{su}}_{jt} + u^{\mathrm{on}}_{jt'} - 1]^+ \right )\\
          & \geq \sum_{t \in \mathcal{T}} \left ( c^{\mathrm{su}}_j u^{\mathrm{su}}_{jt} + c^{\mathrm{sd}}_{j} u^{\mathrm{sd}}_{jt} - \sum_{\substack{t' \in \mathcal{T}\\ t' < t}} c^{\mathrm{dd}}_{jtt'} u^{\mathrm{su}}_{jt} \right )
          \geq \sum_{t \in \mathcal{T}} c^{\mathrm{sd}}_{j} u^{\mathrm{sd}}_{jt}
          \geq 0,
      \end{aligned}
  \end{equation*}
  where the first inequality follows from $[u^{\mathrm{su}}_{jt} + u^{\mathrm{on}}_{jt'} - 1]^+ \leq [u^{\mathrm{su}}_{jt}]^+ = u^{\mathrm{su}}_{jt}$ and nonnegativity of $c^{\mathrm{dd}}_{jtt'}$, the second inequality from Assumption~\ref{assump:DataAssumptions} ($c^{\mathrm{su}}_{j} \geq \sum_{\substack{t' \in \mathcal{T}\\t' < t}} c^{\mathrm{dd}}_{jtt'}$) and $u^{\mathrm{su}}_{jt} \geq 0$, and the third inequality from nonnegativity of $c^{\mathrm{sd}}_j$ and $u^{\mathrm{sd}}_{jt}$.
  Considering the objective function for \eqref{SC-ACOPF},
  \begin{equation*}
      \begin{aligned}
          & \sum_{t \in \mathcal{T}} R^{\mathrm{T}}_t(p,q) + \sum_{j \in \mathcal{J}^{\mathrm{sd}}} R^{\mathrm{J}}_j(p,u)\\
          =\ & \sum_{t \in \mathcal{T}} \left ( \sum_{j \in \mathcal{J}^{\mathrm{cs}}} R^{\mathrm{pow}}_{jt} (p) - \sum_{j \in \mathcal{J}^{\mathrm{pr}}} C^{\mathrm{pow}}_{jt} (p) - \sum_{j \in \mathcal{J}^{\mathrm{ac}}} C^{\mathrm{ac}}_{jt} (p,q) - \sum_{i \in \mathcal{I}} C^{\mathrm{bal}}_{it} (p,q) -C^{\mathrm{res}}_t (p,q) - C^{\mathrm{ctg}}_t (p,q) \right ) \\
          & - \sum_{j \in \mathcal{J}^{\mathrm{sd}}} \left (C^{\mathrm{uc}}_j(u) + C^{\mathrm{e}}_{j} (p) \right ) \\
          =\ & \sum_{t \in \mathcal{T}} \left ( C^{\mathrm{UB}}_t (p,q,u) + \sum_{j \in \mathcal{J}^{\mathrm{sd}}} c^{\mathrm{on}}_{jt} u^{\mathrm{on}}_{jt} - C^{\mathrm{ctg}}_t (p,q) \right )  - \sum_{j \in \mathcal{J}^{\mathrm{sd}}} \left (C^{\mathrm{uc}}_j(u) + C^{\mathrm{e}}_{j} (p) \right )
          \leq\ \sum_{t \in \mathcal{T}} C^{\mathrm{UB}}_t (p,q,u),
      \end{aligned}
  \end{equation*}
  where the inequality follows from the previously observed properties of the objective functions.  This implies that the feasible solution $(p,q,u,\mu,\omega)$ has an objective value in \eqref{eq:SOCUB} at least as large as the objective value for $(p,q,u,v,\Delta,\theta,\tau,\phi)$ in \eqref{SC-ACOPF}. 
  Because we are maximizing and this property applies for any feasible solution to \eqref{SC-ACOPF}, the optimal value of \eqref{eq:SOCUB} must be at least as high as the optimal value of~\eqref{SC-ACOPF}.
\end{proof}

\bibliographystyleappendix{informs2014}
\bibliographyappendix{AlternatingMethods_SCUCACOPF_appendix.bib}

\end{document}